\numberwithin{equation}{section}
\newcommand{\bbC}{\mathbb{C}}
\newcommand{\bbN}{\mathbb{N}}
\newcommand{\bbR}{\mathbb{R}}
\newcommand{\bbZ}{\mathbb{Z}}
\newcommand{\calA}{\mathcal{A}}
\newcommand{\calF}{\mathcal{F}}
\newcommand{\calL}{\mathcal{L}}
\newcommand{\calS}{\mathcal{S}}
\newcommand{\calT}{\mathcal{T}}
\newcommand{\calU}{\mathcal{U}}
\newcommand{\calX}{\mathcal{X}}
\theoremstyle{definition}
\newtheorem{definition}{Definition}[section]
\newtheorem{remark}[definition]{Remark}
\newtheorem{remarks}[definition]{Remarks}
\newtheorem{example}[definition]{Example}
\newtheorem{construction}[definition]{Construction}
\newtheorem{assumptions}[definition]{Assumptions}
\newtheorem{open_problem}[definition]{Open Problem}
\theoremstyle{plain}
\newtheorem{proposition}[definition]{Proposition}
\newtheorem{lemma}[definition]{Lemma}
\newtheorem{theorem}[definition]{Theorem}
\newtheorem{corollary}[definition]{Corollary}
\DeclareMathOperator{\linspan}{span}
\DeclareMathOperator{\Id}{Id}
\DeclareMathOperator{\conv}{conv}
\DeclareMathOperator*{\stoplim}{stop-lim}
\DeclareMathOperator*{\wlim}{w-lim}
\newcommand{\IZ}{\mathbb{Z}}                                
\newcommand{\IN}{\mathbb{N}}
\newcommand{\IE}{\mathbb{E}}
\newcommand{\abs}[1]{\left\lvert#1\right\rvert}
\newcommand{\norm}[1]{\left\lVert#1\right\rVert}
\newcommand{\normalnorm}[1]{\lVert#1\rVert}
\newcommand{\R}[2][\empty]{
	\ifthenelse{\equal{#1}{\empty}}
		{\mathcal{R}\left\{#2\right\}}
		{\mathcal{R}_{#1}\left\{#2\right\}}
}
\let\temp\epsilon
\let\epsilon\varepsilon
\let\varepsilon\temp
\title{A Toolkit for Constructing Dilations on Banach Spaces}
\author{Stephan Fackler}
\author{Jochen Glück}
\address{Institute of Applied Analysis, Ulm University, Helmholtzstr.\ 18, 89069 Ulm}
\email{stephan.fackler@alumni.uni-ulm.de, jochen.glueck@uni-ulm.de}
\date{\today}
\thanks{The first author was supported by the DFG grant AR 134/4-1 ``Regularität evolutionärer Probleme mittels Harmonischer Analyse und Operatortheorie''. Both authors were supported by a scholarship within the scope of the Landesgraduiertenf\"orderung Baden--W\"urttemberg, Germany, while some of the work on this article was done.}
\keywords{Isometric dilations; simultaneous dilations; dilations on general Banach spaces; convex combinations}
\subjclass[2010]{Primary 47A20; Secondary 46B08.}
\begin{document}

\begin{abstract}
	We present a completely new structure theoretic approach to the dilation theory of linear operators. Our main result is the following theorem: if $X$ is a super-reflexive Banach space and $T$ is contained in the weakly closed convex hull of all invertible isometries on $X$, then $T$ admits a dilation to an invertible isometry on a Banach space $Y$ with the same regularity as $X$. The classical dilation theorems of Sz.-Nagy and Akcoglu-Sucheston are easy consequences of our general theory.
\end{abstract}

\maketitle

\section{Introduction} \label{sec:introduction}

Consider a bounded linear operator $T$ on a Banach space $X$. We say that $T$ has a dilation to a Banach space $Y$ if there exist linear contractions $J\colon X \to Y$ and $Q\colon Y \to X$ and an invertible linear isometry $U\colon Y \to Y$ such that
\begin{equation}
	T^n = QU^nJ \qquad \text{ for all } n\in \bbN_0. \label{eq:dilation}
\end{equation} 
It follows from this definition that $T$ is necessarily contractive. Constructing a dilation for a given operator $T$ is a very subtle endeavor since the questions whether such a dilation exists and, granted that it exists, whether it is useful in applications depend crucially on the choice of $Y$. 

In concrete applications one is usually interested in $Y$ having the same regularity as the original space $X$. If, for instance, $X$ is a Hilbert space or an $L^p$-space, then $Y$ should be out of the same class. One therefore requires $Y$ to be out of a prescribed class of Banach spaces $\calX$. The question whether a dilation to such a space exists is a delicate one, and for some operators it may not be possible to find a dilation in the class $\calX$. For instance, by a result of Sz.-Nagy every Hilbert space contraction has a dilation in the class of Hilbert spaces~\cite[Theorem~1.1]{Pis01}, whereas for $p \in (1, \infty) \setminus \{2\}$, as a direct consequence of~\cite[Theorem~3]{Pel81}, there exist contractions on $L^p$ that do not have a dilation in the class of all $L^p$-spaces. 

The applications of dilation results are plenty and profound. For example, Sz.-Nagy's dilation, in conjunction with the spectral theorem for normal operators, implies von Neumann's inequality: every Hilbert space contraction $T \in \mathcal{L}(H)$ satisfies
\begin{equation*}
	\norm{p(T)}_{\mathcal{L}(H)} \le \sup_{\abs{z} = 1} \norm{p(z)}
\end{equation*}
for all complex polynomials $p$. A second celebrated dilation theorem due to Akcoglu and Sucheston~\cite{AkcSuc77} states that every positive contraction on a reflexive $L^p$-spaces has a dilation to a positive invertible isometry on another $L^p$-space for the same $p$. As a consequence one obtains for positive contractions on $L^p$ the almost everywhere convergence of ergodic means~\cite{Akc75} and the validity of Matsaev's inequality~\cite[Theorem~9]{Pel81}. Further, Akcoglu and Sucheston's theorem is used to deduce fundamental properties of both discrete and continuous positive contractive semigroups on reflexive $L^p$-spaces such as the boundedness of their $H^{\infty}$-calculus (\cite[Remark~4.9c)]{Wei01} and~\cite[Theorem~8.3]{Mer14}) or maximal $L^q$-regularity~\cite[Theorem~1.3]{Blu01b}.

\subsection*{A structure theoretic approach}

Usually, proofs of dilation theorems are explicitly adapted to the spaces and operators under consideration and make massive use of their particular properties. As a concrete instance of this observation, we refer to the detailed presentation of Akcoglu and Sucheston's dilation theorem given in~\cite[Chapters~2~\&~3]{Fendler1998}. 
This makes the proof difficult to understand at an abstract level and, moreover, the proof does not really clarify the exact role of the geometry of the underlying Banach space $X$. 
Furthermore, although at least three different proofs for Akcoglu and Sucheston's theorem are nowadays known and have been known for several decades, namely those in \cite{AkcSuc77}, \cite{Pel81} and~\cite{NagPal82}, no general technique seems to be available to construct dilations on more general spaces. 
To the best of our knowledge, no dilation result is known for any non-trivial class of contractions on general super-reflexive or UMD Banach spaces.

In order to address those issues we develop a new, structure theoretical approach to dilation problems. We introduce the concept of simultaneous dilations for a set of operators which turns out to be the right framework for proving general dilation theorems. Based on this concept, we develop a toolkit for constructing dilations on general classes of reflexive Banach spaces. This toolkit essentially consists of topological and algebraical closedness results. Given a set of simultaneously dilating operators, our main result asserts that its convex hull also admits a simultaneous dilation to a Banach space with the same regularity. Moreover, it is easy to see that the existence of dilations is preserved by strong operator limits. As a consequence  the weakly closed convex hull of all invertible isometries on a Banach space simultaneously dilates to a space with similar regularity (Corollary~\ref{cor:convex-combinations-of-isometries}). Hence, proving dilation theorems reduces to the task of finding approximation theorems on Banach spaces, and it is only here where the particular geometry of the Banach space comes into play. As a consequence, we achieve the following three goals:

\begin{enumerate}
	\item[(i)] We show that every convex combination of invertible isometries on a super-reflexive or UMD space dilates to an invertible isometry on another space of the same class (Theorem~\ref{thm:super-reflexive-spaces} and Corollary~\ref{cor:umd-spaces}).
	\item[(ii)] We give a new proof of Akcoglu and Sucheston's dilation theorem (Subsection~\ref{subsec:L-p}).
	\item[(iii)] We generalize their theorem by establishing \emph{simultaneous} dilations (Theorem~\ref{thm:akcoglu-simultaneously}).
\end{enumerate}

\subsection*{Dilation and regularity}

As pointed out above, the existence of a dilation crucially depends on the considered class of Banach spaces. Let us demonstrate by a simple construction that, for \emph{every} contraction $T$ on a Banach space $X$, there exists a Banach space $Y$ such that $T$ dilates to an invertible isometry on $Y$.

\begin{construction} \label{constr:simple-dilation-ell-1}
	Let $T\colon X \to X$ be a contractive linear operator on a Banach space $X$. Choose $Y \coloneqq \ell^{1}(\bbZ;X)$, let $U\colon \ell^1(\bbZ;X) \to \ell^1(\bbZ;X)$ be the right shift $(x_n)_{n \in \bbZ} \mapsto (x_{n-1})_{n \in \bbZ}$, let $J\colon X \to Y$ be the injection into the zeroth component, meaning that $Jx = (\ldots, 0, x, 0 , \ldots)$ for all $x \in X$, and define $Q\colon Y \to X$ by
\begin{align*}
	Q(x_n)_{n \in \bbZ} = \sum_{n = 0}^\infty T^n x_n
\end{align*}
for all $(x_n)_{n \in \bbZ} \in Y$. Then $U$ is an invertible isometry on $Y$, the operators $J$ and $Q$ are contractive and~\eqref{eq:dilation} holds.
\end{construction}

A similar -- and somewhat dual -- construction can be found on $Y = \ell^{\infty}(\IZ;X)$; see also \cite{Stroescu1973} for a related construction. The above construction demonstrates why dilation theory is all about the choice of the space $Y$. It is easy to construct a dilation to \emph{some} Banach space $Y$; this space, though, does not inherent any regularity properties such as reflexivity from $X$. 

In this context it is also worthwhile pointing out that, in case that $X$ is an $L^1$-space, the space $Y$ from Construction~\ref{constr:simple-dilation-ell-1} is an $L^1$-space, too. However, if the underlying measure space of $X$ is finite, say a probability space, the underlying measure space of $Y$ is only $\sigma$-finite, though. This drawback prevents us, for instance, from applying Construction~\ref{constr:simple-dilation-ell-1} in probability theory.

\subsection*{Outline of the paper} 

In Section~\ref{sec:tool-kit} we introduce our framework and then state our main result, Theorem~\ref{thm:main-result}, and an important consequence, Corollary~\ref{cor:convex-combinations-of-isometries}. 
In Section~\ref{sec:elementary-properties} we show some simple stability properties of dilations. In Sections~\ref{sec:convex-combinations} and~\ref{sec:convex-combinations-simultaneous} we prove that dilations are well-behaved with respect to convex combinations. We consciously chose to introduce a bit of redundancy in the Sections~\ref{sec:convex-combinations} and~\ref{sec:convex-combinations-simultaneous} to make the quite technical proof of Theorem~\ref{thm:convex-combinations-simultaneous} more transparent. In Section~\ref{sec:super-properties} we use our main result to deduce dilation results on Banach spaces satisfying certain regularity properties such as super-reflexive spaces and UMD-spaces. In Section~\ref{sec:L-p} we discuss how our main result can be used to reprove the Akcoglu--Sucheston dilation theorem on $L^p$-spaces and to even obtain a generalization of it; we also show that the Sz.-Nagy dilation theorem on Hilbert spaces is a direct consequence of our approach. We conclude with an outlook in Section~\ref{sec:outlook}. Appendix~\ref{appendix:group-theory} contains a few simple results from group theory used in the proofs of Theorems~\ref{thm:convex-combination} and~\ref{thm:convex-combinations-simultaneous}.

\subsection*{Related literature}

For information on the Sz.-Nagy's dilation theorem on Hilbert spaces we refer the reader to \cite{SFBK10}. For several predecessors of Akcoglu and Sucheston's result we refer the reader to the references in~\cite{AkcSuc77}. For a certain class, the Ritt operators, the existence of dilations can be characterized (see~\cite[Theorem~4.1]{ArhFacMer17} and~\cite[Theorem~4.8]{ArhMer14}). A dilation theorem on rearrangement invariant Banach function spaces due to Peller can be found in \cite[Section~6, Theorem~7]{Pel81}. Dilations on $L^1$-spaces can, for example, be constructed by using methods from the theory of Markov processes; see for instance \cite{Kern1977}; moreover, the dilations constructed in this way are well-behaved with respect to spectral properties \cite[Section~5]{Kern1977}. Dilation results on non-commutative spaces such as $W^*$-algebras are of importance in quantum physics; a systematic treatment of them was initiated by K\"ummerer in the 1980s; see for example~\cite{Kuemmerer1985a}. The situation on non-commutative $L^p$-spaces is for instance discussed in~\cite{Junge2007} and in the recent paper \cite{ArhancetPreprint}.

\subsection*{Preliminaries}

Throughout the paper the underlying scalar field of all Banach spaces is allowed to be either $\bbR$ or $\bbC$, but we assume that it is the same field for all occurring Banach spaces. If $X$ is a Banach space, then we denote the space of all bounded linear operators from $X$ to $X$ by $\calL(X)$. For the product of finitely many operators $T_1,\dots,T_n \in \calL(X)$ we use the notation
\begin{align*}
	\prod_{k=1}^n T_k \coloneqq T_1 \dots T_n,
\end{align*}
i.e.\ the operator with the lowest index is placed left in the product. Moreover, we use the common convention that $\prod_{k=1}^0 \coloneqq \Id_X$, i.e.\ the empty product is defined to be the identity operator.

\section{The dilation toolkit and main results} \label{sec:tool-kit}

In this section we introduce the framework for our theory, we present our main results in Theorem~\ref{thm:main-result} and Corollary~\ref{cor:convex-combinations-of-isometries} and we give some fundamental characterizations of dilation properties.

\subsection*{The framework}

Let $I$ be a non-empty index set, let $(X_i)_{i \in I}$, $(Y_i)_{i\in I}$ be two families of Banach spaces and let $(T_i)_{i \in I}$ be a family with $T_i \in \mathcal{L}(X_i,Y_i)$ for all $i \in I$. For every ultrafilter $\calU$ on $I$ we denote the ultraproducts of the families $(X_i)_{i \in I}$ and $(Y_i)_{i \in I}$ along $\calU$ by $\prod_{\calU} X_i$ and $\prod_{\calU} Y_i$, respectively; the ultraproduct of the operator family $(T_i)_{i \in I}$ along $\calU$ is denoted by $\prod_{\calU} T_i$. Let $x_i \in X_i$ for each index $i \in I$. Then we denote the equivalence class of the family $(x_i)_{i \in I}$ within $\prod_{\calU} X_i$ by $(x_i)_\calU$. If $X_i = X$ for a Banach space $X$ and all $i \in I$, then we use the abbreviation $X^\calU \coloneqq \prod_\calU X_i$; in this case the ultraproduct $X^\calU$ is called a ultrapower of $X$. In a canonical way every operator $T \in \calL(X)$ induces an operator on $X^\calU$ which we denote by $T^\calU$. For an introduction to the theory of ultraproducts we refer the reader to~\cite{Hei80} and~\cite[Section~8]{DJT95}.

Let $p \in (1,\infty)$ and $n \in \bbN$. For a Banach space $X$ we denote the space $X^n$, endowed with the norm
\begin{align*}
	\|(x_1,\dots,x_n)\|_p \coloneqq \big(\sum_{k=1}^n \|x_k\|^p\big)^{1/p},
\end{align*}
by $\ell^p_n(X)$. Throughout we often consider classes of Banach spaces that fulfill a set of conditions adopted to our constructions. Fix a number $p \in (1,\infty)$.

\begin{assumptions} \label{ass:framework}
	We say that a class of Banach spaces $\calX$ fulfills Assumptions~\ref{ass:framework} if the following conditions hold.
	\begin{enumerate}
		\item[(a)] The class $\calX$ is stable with respect to finite $\ell^p$-powers, i.e.\ for every $X \in \calX$ and every $n \in \bbN$ we have $\ell^p_n(X) \in \calX$.
		\item[(b)] The class $\calX$ is ultra-stable, i.e.\ for every family of spaces $(X_i)_{i \in I}$ in $\calX$ and every ultrafilter $\calU$ on $I$ we have $\prod_{\calU} X_i \in \calX$.
		\item[(c)] Every space $X \in \calX$ is reflexive.
	\end{enumerate}
\end{assumptions}

Let us mention two simple but important classes which fulfills the Assumptions~\ref{ass:framework}.

\begin{example}\label{ex:class_hilbert_space}
	Let $p = 2$. Then the class of all Hilbert spaces fulfills Assumptions~\ref{ass:framework}.
\end{example}

\begin{example}
	Fix $p \in (1,\infty)$. Then the class of all $L^p$-spaces (over arbitrary measure spaces) fulfills Assumptions~\ref{ass:framework}.
\end{example}

\subsection*{Super-reflexive spaces}

Recall that a Banach space $Z$ is called \emph{super-reflexive} if every ultrapower of $Z$ is reflexive. If a class of Banach spaces $\calX$ fulfills Assumptions~\ref{ass:framework} then, as a consequence of~(b) and~(c), all spaces in $\calX$ are super-reflexive. Conversely, we now show that, for every super-reflexive Banach space $Z$, there exists a class $\mathcal{X}_Z$ of Banach spaces that contains $Z$ and fulfills Assumptions~\ref{ass:framework}. We need the following terminology.

\begin{definition}\label{def:finitely_representable}
	A Banach space $X$ is \emph{finitely representable} in a second Banach space $Z$, and we write $X \xhookrightarrow{f} Z$ for this, if for every finite dimensional subspace $E \subseteq X$ and every $\epsilon > 0$ there exists a subspace $F \subseteq Z$ and an isomorphism $u\colon E \to F$ with $\norm{u} \normalnorm{u^{-1}} \le 1 + \epsilon$.
\end{definition}

The following elementary observation is quite useful for our purposes.

\begin{lemma}\label{lem:ell_2_finitely_repres}
	Let $X$ and $Z$ be Banach spaces with $X \xhookrightarrow{f} Z$. Then one has $\ell^2_n(X) \xhookrightarrow{f} \ell^2(Z)$ for all $n \in \IN$.
\end{lemma}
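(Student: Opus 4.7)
The plan is to reduce the statement to the hypothesis $X \xhookrightarrow{f} Z$ applied coordinate-wise. So fix $n \in \IN$ and a finite dimensional subspace $E \subseteq \ell^2_n(X)$, together with $\epsilon > 0$. For each coordinate $k \in \{1,\dots,n\}$ let $\pi_k \colon \ell^2_n(X) \to X$ denote the canonical projection and put $E_k \coloneqq \pi_k(E)$. Each $E_k$ is a finite dimensional subspace of $X$, and one has the trivial isometric inclusion $E \subseteq E_1 \oplus_2 \cdots \oplus_2 E_n$ inside $\ell^2_n(X)$, so it suffices to embed this larger ambient direct sum almost-isometrically into $\ell^2(Z)$.

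Next I would invoke the hypothesis $X \xhookrightarrow{f} Z$ separately for each $E_k$: pick $\delta > 0$ with $1 + \delta \le 1 + \epsilon$, choose subspaces $F_k \subseteq Z$ and isomorphisms $v_k \colon E_k \to F_k$ with $\norm{v_k} \normalnorm{v_k^{-1}} \le 1 + \delta$, and rescale $v_k$ by a positive constant so that, without loss of generality, $\norm{v_k} \le 1$ and $\normalnorm{v_k^{-1}} \le 1 + \delta$. Thus each $v_k$ satisfies
\begin{equation*}
	(1+\delta)^{-1} \norm{x} \le \norm{v_k x} \le \norm{x} \qquad (x \in E_k).
\end{equation*}

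Now I would assemble the individual $v_k$ into a single diagonal operator. Define
\begin{equation*}
	u \colon E_1 \oplus_2 \cdots \oplus_2 E_n \to F_1 \oplus_2 \cdots \oplus_2 F_n, \qquad (x_1,\dots,x_n) \mapsto (v_1 x_1, \dots, v_n x_n).
\end{equation*}
Squaring and summing the coordinate-wise bounds gives $(1+\delta)^{-1} \norm{(x_k)}_2 \le \norm{u(x_k)}_2 \le \norm{(x_k)}_2$, hence $\norm{u}\normalnorm{u^{-1}} \le 1 + \delta \le 1 + \epsilon$. Finally, the target $F_1 \oplus_2 \cdots \oplus_2 F_n$ embeds isometrically into $\ell^2(Z)$ by placing the $k$-th summand at coordinate $k$ (and zero elsewhere); restricting $u$ to the original subspace $E$ and composing with this isometric embedding yields the required almost-isometric embedding $E \hookrightarrow \ell^2(Z)$.

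I do not expect any serious obstacle: the argument is essentially bookkeeping, and the only mild subtlety is ensuring the distortion constants do not accumulate when the $v_k$ are glued together. This is precisely where the $\ell^2$-structure is used, because adding the squared bounds preserves the multiplicative distortion $1+\delta$ without any dependence on $n$.
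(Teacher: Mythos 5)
Your proof is correct and follows essentially the same route as the paper's: a coordinate-wise application of finite representability assembled into a diagonal operator, with the $\ell^2$-structure ensuring the distortion does not depend on $n$, followed by the isometric placement into the first $n$ coordinates of $\ell^2(Z)$. The only (immaterial) difference is bookkeeping: you invoke $X \xhookrightarrow{f} Z$ separately for each coordinate projection $\pi_k(E)$, obtaining $n$ maps $v_1,\dots,v_n$, whereas the paper invokes it once on the single subspace spanned by all coordinates of a basis of $E$ and uses that one map diagonally.
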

\begin{proof}
	Let $n \in \IN$, $\epsilon > 0$ and let $E \subseteq \ell^2_n(X)$ be a finite dimensional subspace. Choose a basis $y_1, \ldots, y_m$ of $E$. We write $y_k = (x_{k1}, \ldots, x_{kn})$ for all $k = 1, \ldots, m$ and we set 
	\begin{equation*}
		\tilde{E} = \linspan \big\{ x_{kl}: k \in \{ 1, \ldots, m \}, \, l \in \{ 1, \ldots, n \} \big\} \subseteq X.
	\end{equation*}
	By assumption, there exist a subspace $\tilde{F} \subseteq Z$ and an isomorphism $\tilde{u}\colon \tilde{E} \to \tilde{F}$ with $\norm{\tilde{u}} \normalnorm{\tilde{u}^{-1}} \le 1 + \epsilon$. Let $z_k = (\tilde{u}(x_{k1}), \ldots, \tilde{u}(x_{kn}),0, \ldots) \in \ell^2(Z)$ for $k = 1, \ldots, m$ and $F = \linspan \{ z_1, \ldots, z_m \} \subseteq \ell^2(Z)$. Then $u\colon \sum_{k=1}^m a_k y_k \mapsto \sum_{k=1}^m a_k z_k$ is an isomorphism from $E$ onto $F$ with $\norm{u} \normalnorm{u^{-1}} \le 1 + \epsilon$.
\end{proof}

It is well-known that a Banach space $Z$ is super-reflexive if and only if every Banach space $X$ that is finitely representable in $Z$ is reflexive. We can now show that every super-reflexive Banach space is contained in a class of spaces which fulfill the Assumptions~\ref{ass:framework}.

\begin{proposition}\label{prop:X_Z_satisfies_assumptions}
	Let $Z$ be a super-reflexive Banach space and define the class
	\begin{equation*}
		\calX_Z \coloneqq \left\{ X \text{ Banach space}: X \xhookrightarrow{f} \ell^2(Z) \right\}.
	\end{equation*}
	Then $\calX_{Z}$ contains $Z$ and fulfills Assumptions~\ref{ass:framework} for $p=2$.
\end{proposition}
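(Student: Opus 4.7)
The plan is to verify the three conditions of Assumptions~\ref{ass:framework} in turn, with the ultra-stability condition~(b) being the main technical step. The containment $Z \in \calX_Z$ is immediate since the isometric inclusion of $Z$ into $\ell^2(Z)$ as the first coordinate already witnesses $Z \xhookrightarrow{f} \ell^2(Z)$.

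For condition~(c), I would first argue that $\ell^2(Z)$ is itself super-reflexive. By Enflo's renorming theorem, the super-reflexive space $Z$ admits an equivalent uniformly convex norm, and the corresponding $\ell^2$-sum norm on $\ell^2(Z)$ is then again uniformly convex by a routine convexity computation; hence $\ell^2(Z)$ is super-reflexive. The standard characterization of super-reflexivity via finite representability then gives that every Banach space finitely representable in $\ell^2(Z)$ is (super-)reflexive, which is exactly~(c).

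Condition~(a) is a direct application of Lemma~\ref{lem:ell_2_finitely_repres}: given $X \in \calX_Z$, i.e.\ $X \xhookrightarrow{f} \ell^2(Z)$, the lemma yields $\ell^2_n(X) \xhookrightarrow{f} \ell^2(\ell^2(Z))$, and the canonical isometric isomorphism $\ell^2(\ell^2(Z)) \cong \ell^2(Z)$ then shows $\ell^2_n(X) \in \calX_Z$.

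The heart of the proof is condition~(b). Let $(X_i)_{i \in I}$ be a family in $\calX_Z$, let $\calU$ be an ultrafilter on $I$, and set $X_\calU \coloneqq \prod_\calU X_i$. Given $\epsilon > 0$ and a finite-dimensional subspace $E \subseteq X_\calU$ with basis $y_1, \dots, y_m$ where $y_k = (y_{k,i})_\calU$, the strategy is to lift $E$ to a near-isometric copy inside a single factor $X_i$ and then apply finite representability there. Concretely, consider the map $\pi_i \colon E \to X_i$, $\sum a_k y_k \mapsto \sum a_k y_{k,i}$. By definition of the ultraproduct norm, $\norm{\pi_i x} \to \norm{x}$ along $\calU$ for each fixed $x \in E$. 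I would first use $\norm{\pi_i y_k} \to \norm{y_k}$ together with the equivalence of all norms on the finite-dimensional space $E$ to bound $\norm{\pi_i}$ uniformly on a $\calU$-large set, and then, choosing a sufficiently fine finite net on the unit sphere of $E$, upgrade pointwise convergence to uniform control, so that for $\calU$-almost all $i$ the map $\pi_i$ is a linear isomorphism onto its image with $\norm{\pi_i} \normalnorm{\pi_i^{-1}} \le 1 + \delta$. Fixing such an $i$ and invoking $X_i \xhookrightarrow{f} \ell^2(Z)$ to embed the finite-dimensional subspace $\pi_i(E)$ into $\ell^2(Z)$ with distortion at most $1 + \delta$, the composition then yields an embedding $E \hookrightarrow \ell^2(Z)$ of distortion at most $(1 + \delta)^2 \le 1 + \epsilon$. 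The hard part will be executing this sphere-wide uniform control of $\pi_i$ cleanly, since a priori only pointwise convergence of the norms is available from the ultraproduct construction.
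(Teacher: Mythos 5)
Your proposal is correct and follows essentially the same route as the paper: the same trivial verification that $Z \in \calX_Z$, condition~(a) via Lemma~\ref{lem:ell_2_finitely_repres} together with the isometric identification $\ell^2(\ell^2(Z)) = \ell^2(Z)$, condition~(c) via super-reflexivity of $\ell^2(Z)$ and the finite-representability characterization of super-reflexivity, and condition~(b) via the fact that finite-dimensional subspaces of an ultraproduct embed almost isometrically into the factors. The only difference is one of detail: where you prove that ultraproduct fact by hand (the standard lifting-plus-finite-net argument, which is sound) and justify super-reflexivity of $\ell^2(Z)$ through Enflo renorming and uniform convexity of $\ell^2$-sums, the paper simply asserts the latter and cites \cite[Lemma~11.66]{Pis16} for the former.
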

\begin{proof}
	Since $Z$ is isometrically isomorphic to a subspace of $\ell^2(Z)$ we clearly have $Z \in \calX_Z$. Let us now show that $\calX_Z$ fulfills Assumptions~\ref{ass:framework}.
	
	Since $Z$ is super-reflexive, so is $\ell^2(Z)$. Hence, every space $X \in \calX_Z$ is reflexive, which shows~(c). For (a) let $n \in \IN$ and $X \in \calX_Z$. By Lemma~\ref{lem:ell_2_finitely_repres} one has $\ell^2_n(X) \xhookrightarrow{f} \ell^2(\ell^2(Z)) = \ell^2(Z)$, where the last equality holds in the sense of isometric isomorphisms. The remaining property~(b) of Assumptions~\ref{ass:framework} follows from the fact that every ultra power of a Banach space is finitely representable in the space itself \cite[Lemma~11.66]{Pis16}.
\end{proof}

In Section~\ref{sec:super-properties} we combine Proposition~\ref{prop:X_Z_satisfies_assumptions} with so-called \emph{super-properties} of Banach spaces to find classes of Banach spaces which fulfill Assumptions~\ref{ass:framework} and which have, at the same time, further regularity properties.

\subsection*{Simultaneously dilating operators} Let us now define sets of \emph{simultaneously dilating} operators. 

\begin{definition} \label{def:dilations}
	Let $\calX$ be a class of Banach spaces and let $X \in \calX$.
	\begin{enumerate}
		\item[(a)] An operator $T \in \calL(X)$ has a \emph{dilation} in $\calX$ if there exist a space $Y \in \calX$, linear contractions $J\colon X \to Y$, $Q\colon Y \to X$ and a linear invertible isometry $U \in \calL(Y)$ such that for all $n \in \bbN_0$
		\begin{align}
			\label{eq:single-dilation}
			T^n = QU^nJ.
		\end{align}
		\item[(b)] A set of operators $\calT \subseteq \calL(X)$ has a \emph{simultaneous dilation} in $\calX$ if there exist a space $Y \in \calX$, linear contractions $J\colon X \to Y$, $Q\colon Y \to X$ and invertible isometries $U_T \in \calL(Y)$ (for $T \in \calT$) such that the equality
		\begin{align}
			\label{eq:simultaneous-dilation}
			T_1 \cdots T_n = Q \, U_{T_1} \cdots U_{T_n} \, J
		\end{align}
		holds for all $n \in \bbN_0$ and all $T_1,\dots, T_n \in \calT$.
	\end{enumerate}
\end{definition}

For a proper reading of part~(b) of the above definition it is important to recall that we agreed on the empty product to be the identity operator.

If $T$ has a dilation, then it follows from the equality $T^n = QU^nJ$ for $n=0$ that $QJ = \Id_X$. Hence, $J$ is automatically an isometry and $JQ$ is a contractive projection on $Y$ with range $J(X)$. Thus, $X$ may be considered as a subspace of $Y$ (via $J$) and $Q$ may be considered as a projection from $Y$ onto this subspace. Moreover, $T = QUJ$ implies that $T$ is contractive, i.e.\ only contractive operators can have a dilation in our sense. Similar observations hold for simultaneous dilations.

We point out that our notion of a dilation differs slightly from the definition which is, for instance, used by Akcoglu and Sucheston in~\cite{AkcSuc77}. Yet, it is easy to see that if an operator has a dilation in the sense of Definition~\ref{def:dilations}(a), then it also has a dilation in the sense of \cite{AkcSuc77}.

If $X$ is a Banach space taken from a given class $\calX$, then the set of all invertible isometries always has a simultaneous dilation in $\calX$. 

\begin{example} \label{ex:ismoetries-have-simultaneous-dilation}
	Let $\calX$ be a class of Banach spaces, let $X \in \calX$ and $\calT \subseteq \calL(X)$ be a set of linear invertible isometries. Then $\calT$ has a simultaneuous dilation in $\calX$. Indeed, simply take $Y = X$, $J = Q = \Id$ and $U_T = T$ for all $T \in \calT$.
\end{example}

The whole point of our approach is to first find a (not too small) set of operators which admits a simultaneous dilation and then to construct, out of this set, a larger set that has a dilation by proving stability results. Example~\ref{ex:ismoetries-have-simultaneous-dilation} shows that we can always start with the set of invertible isometries; proving stability results is much more involved. Our main result, Theorem~\ref{thm:main-result}, is of this type.

\subsection*{The main result}

Theorem~\ref{thm:main-result} below is our main result; in conjunction with Example~\ref{ex:ismoetries-have-simultaneous-dilation} it yields simultaneous dilations for large sets of operators.

\begin{theorem} \label{thm:main-result}
	Fix $p \in (1,\infty)$ and let $\calX$ be a class of Banach spaces which fulfills Assumptions~\ref{ass:framework}. Let $X \in \calX$ and let $\calT \subseteq \calL(X)$ be a set of operators that has a simultaneous dilation in $\calX$. Then the weak operator closure of the convex hull $\conv(\calT)$ also has a simultaneous dilation in $\calX$.
\end{theorem}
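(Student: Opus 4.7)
My plan is to prove the theorem in two independent steps: first closing $\calT$ under convex combinations, then closing under the weak operator topology.

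For the convex-hull step, I would fix a simultaneous dilation $(Y, J, Q, (U_S)_{S \in \calT})$ of $\calT$ and use the following probabilistic heuristic: if $T = \sum_{k=1}^n \lambda_k S_k \in \conv(\calT)$, then $T^m x$ is the expectation of $S_{k_1} \cdots S_{k_m} x$ over independent draws of indices with distribution $\lambda$. To realize this operator-theoretically, I would introduce a Bernoulli space $L^p(\Omega_T; Y)$ with $\Omega_T = \calT^{\bbZ}$ carrying the product measure $\mu_T = \bigotimes_{\bbZ} \lambda^T$, and define the invertible isometry
\begin{equation*}
	V_T f(\omega) = U_{\omega_0} f(\sigma_T \omega),
\end{equation*}
where $\sigma_T$ is the two-sided shift on $\Omega_T$. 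With $\tilde{J} x = \mathbf{1}_{\Omega_T} \otimes J x$ and $\tilde{Q} f = Q \int f \, d\mu_T$, an iterated expansion combined with the simultaneous dilation identity $Q U_{S_{k_1}} \cdots U_{S_{k_m}} J = S_{k_1} \cdots S_{k_m}$ yields $\tilde{Q} V_T^m \tilde{J} = T^m$. To handle products of distinct convex combinations simultaneously, I would replace $\Omega_T$ by the product $\Omega = \prod_{T \in \conv(\calT)} \Omega_T$ and let each $V_T$ act trivially on the factors indexed by other elements. Independence of the factors then forces $V_{T_1} \cdots V_{T_m}$ to compress under $\tilde{Q}, \tilde{J}$ to $T_1 \cdots T_m$.

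For the WOT-closure step, given $T$ in the weak operator closure of $\conv(\calT)$, I would pick a net $(T_i)_{i \in I} \subseteq \conv(\calT)$ converging weakly to $T$ and an ultrafilter $\calU$ on $I$ refining that net. On the ultrapower of the dilation space — which lies in $\calX$ by Assumption~\ref{ass:framework}(b) — the operator $(V_{T_i})_\calU$ is an invertible isometry, and reflexivity (Assumption~\ref{ass:framework}(c)) identifies $\calU$-limits with weak limits of bounded sequences. This forces the ultraproducts of $\tilde{J}$ and $\tilde{Q}$, together with the family $((V_{T_i})_\calU)_T$, to form a simultaneous dilation of the entire WOT closure.

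The main obstacle is ensuring that the ambient space from the convex-hull step actually lies in $\calX$. Finite-coordinate approximations of $L^p(\Omega; Y)$ are weighted $\ell^p_n(Y)$-spaces, which lie in $\calX$ via Assumption~\ref{ass:framework}(a) after rescaling the weights, and the full space $L^p(\Omega; Y)$ is then obtained as a conditional-expectation-complemented subspace of an ultraproduct of these approximations, keeping one inside $\calX$ by another application of (b). I expect the actual proof in the paper to carry out this construction purely in terms of iterated $\ell^p_n$-sums and ultraproducts, bypassing measure theory altogether — which is precisely the role of Assumption~\ref{ass:framework}. A secondary technical point, handled by induction on the length of products, is the careful ``independence'' bookkeeping that makes the telescoping $V_{T_1} \cdots V_{T_m} \mapsto T_1 \cdots T_m$ work for every sequence of convex combinations.
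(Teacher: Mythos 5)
Your Bernoulli-cocycle construction is algebraically sound: with $V_T f(\omega) = U_{\omega_0} f(\sigma_T \omega)$, independence of the factors plus the simultaneous dilation identity for $\calT$ indeed give $\tilde Q \, V_{T_1} \cdots V_{T_m} \tilde J = T_1 \cdots T_m$, and this is in fact the ``infinite-volume'' version of the paper's construction (the paper's operator $U = \bigoplus_{\alpha \in \calA} U_\alpha$ on $\ell^p_{N m^N}(X)$, with weights $\lvert \lambda \rvert_\alpha / N$, is precisely your shift cocycle with $\bbZ$ replaced by the cyclic group $\bbZ / N\bbZ$). The genuine gap is exactly where you locate it, and your repair does not close it: nothing in Assumptions~\ref{ass:framework} permits passing to subspaces of members of $\calX$, complemented or not, so realizing $L^p(\Omega; Y)$ as a conditional-expectation-complemented subspace of an ultraproduct of weighted $\ell^p_n(Y)$-spaces does not place $L^p(\Omega;Y)$ in $\calX$. (For the paper's concrete examples -- Hilbert spaces, $L^p$-spaces, the classes $\calX_Z$ -- this happens to be harmless, but Theorem~\ref{thm:main-result} concerns an arbitrary class satisfying only (a)--(c).) Nor can you instead run the dilation on the ultraproduct itself: the two-sided shift preserves no finite set of coordinates, so $V_T$ is not an ultraproduct of isometries of the finite-coordinate approximations. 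This is precisely the problem the paper's notion of $N$-dilation solves: on the finite cyclic spaces $\ell^p_{Nm^N}(X) \in \calX$ one can only obtain the dilation identity for products of length at most $N$ (the cyclic shift wraps around), and Proposition~\ref{prop:N-dilations} then produces a genuine simultaneous dilation by taking the \emph{full} ultraproduct over $N$ as the dilation space -- never a subspace of it. Your proposal is missing this truncation-and-ultraproduct mechanism, and without it the convex-hull step does not go through under the stated assumptions. (By contrast, your independent-factors device for simultaneity is a legitimate alternative to the paper's trick of rewriting finitely many rational convex combinations with common coefficients $1/m$ and invoking Proposition~\ref{prop:finitary}.)

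The WOT-closure step is also broken as stated. If $T_{i,S_j} \to S_j$ only in the weak operator topology, the ultraproduct sandwich yields $\wlim_{i \to \calU} T_{i,S_1} \cdots T_{i,S_n}$, and this need not equal $S_1 \cdots S_n$: operator multiplication is not jointly continuous for the weak operator topology, even on bounded sets, so nothing ``forces'' the required identity. This is exactly why the paper proves stability only for the \emph{strong} operator closure (Proposition~\ref{prop:stop-limits}, whose proof uses joint SOT-continuity of multiplication on bounded sets) and then invokes convexity: since $\conv(\calT)$ is convex, its weak and strong operator closures coincide \cite[Corollary VI.1.5]{DunSch1958}, so one may choose the approximating nets to converge strongly. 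Your argument needs this convexity step; reflexivity and ultrapowers alone do not supply it.
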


As a consequence of Example~\ref{ex:ismoetries-have-simultaneous-dilation} and Theorem~\ref{thm:main-result} we immediately obtain the following corollary.

\begin{corollary} \label{cor:convex-combinations-of-isometries}
	Fix $p \in (1,\infty)$ and let $\calX$ be a class of Banach spaces which fulfils Assumptions~\ref{ass:framework}. Let $X \in \calX$ and let $\calT \subseteq \calL(X)$ denote the weak operator closure of the convex hull of all linear invertible isometries on $X$. Then $\calT$ has a simultaneous dilation in $\calX$. In particular, every $T \in \calT$ has a dilation in $\calX$.
\end{corollary}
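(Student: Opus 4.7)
The plan is straightforward: I combine Example~\ref{ex:ismoetries-have-simultaneous-dilation} with Theorem~\ref{thm:main-result}, which has been set up precisely so that this corollary falls out with essentially no extra work. Let $\calI \subseteq \calL(X)$ denote the set of all linear invertible isometries on $X$. By Example~\ref{ex:ismoetries-have-simultaneous-dilation}, $\calI$ admits a simultaneous dilation in $\calX$; indeed, one takes $Y = X$, $J = Q = \Id_X$ and $U_V = V$ for every $V \in \calI$, and then~\eqref{eq:simultaneous-dilation} holds tautologically because composing invertible isometries gives an invertible isometry.

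Next I would apply Theorem~\ref{thm:main-result} with the hypothesised set of simultaneously dilating operators being $\calI$. The conclusion is that the weak operator closure of $\conv(\calI)$ has a simultaneous dilation in $\calX$. By the very definition given in the statement of the corollary, this weak operator closure is precisely the set $\calT$ under consideration, so the first assertion is established.

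For the ``in particular'' clause, I unpack what a simultaneous dilation gives for a single operator. Let $(Y, J, Q, (U_S)_{S \in \calT})$ be a witness to the simultaneous dilation of $\calT$ from the previous step. Fix $T \in \calT$ and specialise~\eqref{eq:simultaneous-dilation} to the case $T_1 = \dots = T_n = T$, obtaining
\begin{equation*}
	T^n = Q \, U_T^n \, J \qquad \text{for all } n \in \bbN_0.
\end{equation*}
Since $U_T$ is an invertible isometry on $Y \in \calX$ and $J, Q$ are contractions, this is exactly the data of a dilation of $T$ in $\calX$ in the sense of Definition~\ref{def:dilations}(a).

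There is no genuine obstacle in this argument; all the substantive content is already packed into Theorem~\ref{thm:main-result}, whose proof will occupy Sections~\ref{sec:convex-combinations} and~\ref{sec:convex-combinations-simultaneous}. The only conceptual point worth noting is that ``simultaneous dilation'' is strong enough to yield ordinary dilations of every member, simply by collapsing the product in~\eqref{eq:simultaneous-dilation} to a single repeated factor — this is the reason the stronger notion is the right one to propagate through the convex-hull machinery.
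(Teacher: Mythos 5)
Your proposal is correct and follows exactly the paper's own route: the corollary is stated there as an immediate consequence of Example~\ref{ex:ismoetries-have-simultaneous-dilation} combined with Theorem~\ref{thm:main-result}, and your unpacking of the ``in particular'' clause by taking $T_1 = \dots = T_n = T$ in~\eqref{eq:simultaneous-dilation} is the intended (and only needed) observation.
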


The above corollary demonstrates how our approach uncouples dilation theory from any geometric considerations. The dilation result in Corollary~\ref{cor:convex-combinations-of-isometries} holds for all classes of Banach spaces that fulfill the rather mild Assumptions~\ref{ass:framework}, with no regard of the special choice of the spaces in $\calX$. However, in order to apply Corollary~\ref{cor:convex-combinations-of-isometries} to concrete operators one has to determine the weak operator closure of the convex hull of all invertible isometries. This is a pure approximation theoretical task and it is here where the special geometry of the Banach space $X$ comes into play.

\begin{remarks} \label{rem:positive-morphisms}
	(a) Sometimes, it is desirable to restrict not only the class of Banach spaces $\calX$ out of which the space $Y$ in Definition~\ref{def:dilations} is taken, but also the choice for the operators $J$, $Q$ and $U_T$. A typical situation of this type is a follows:
	
	Assume that all spaces in $\calX$ are Banach lattices and that $\calT$ consists of positive operators only. Then we would like to construct a \emph{positive} dilation for an operator $T \in \calT$ or, in greater generality, a \emph{positive} simultaneous dilation of $\calT$. By this we mean that the operators $J$ and $Q$ from Definition~\ref{def:dilations} should not only be contractive, but also positive, and that the operators $U_T$ (for $T \in \calT$) should not only be invertible isometries, but also lattice isomorphisms. 
	Under the assumption that all spaces in $\calX$ are Banach lattices and that all operators in $\calT$ are positive, all our results yield \emph{positive} (simultaneous) dilations instead of only dilations -- provided that the assumptions are adjusted in the obvious way. For instance, in Theorem~\ref{thm:main-result} one has to assume that $\calT$ has a simultaneous positive dilation in $\calX$ and in Corollary~\ref{cor:convex-combinations-of-isometries} one has to assume that $\calT$ is the weak operator closure of the convex hull of all positive invertible isometries.
	
	(b) In view of remark~(a) notice that an invertible isometry $T \in \calL(X)$ on a Banach lattice $X$ is automatically a lattice isomorphism if it is positive; this is a theorem of Abramovich, see for instance \cite{Abramovich1988} or \cite[Theorem~2.2.16]{Emelyanov2007}.
	
	(c) One could also formalize the idea discussed in~(a) by using the language of category theory. One then considers a class of Banach spaces $\calX$ which fulfills Assumptions~\ref{ass:framework} and a class $\mathcal{M}$ of morphisms that satisfies certain stability conditions. In Definition~\ref{def:dilations}, we would require the operators $T \in \calT$, as well as $J$, $Q$ and $U_T$, to be in $\mathcal{M}$.
	
	If we chose $\mathcal{M}$ to be the class of all linear contractions, we would obtain the dilations introduced in Definition~\ref{def:dilations}; if we chose $\calX$ to be a class of Banach lattices and $\mathcal{M}$ to be the class of positive contractions, we would obtain the notion of a \emph{positive dilation} as discussed in~(a).
	
	One could even go further and consider two classes of morphisms $\mathcal{M}_1$ and $\mathcal{M}_2$, where the operators $J$ and $Q$ are required to be contained in $\mathcal{M}_2$ while the operators in $\calT$ and the operators $U_T$ are in $\mathcal{M}_1$. If we chose $\calX$ to be a class of Banach lattices, $\mathcal{M}_2$ to be the class of all positive contractions and $\mathcal{M}_1$ to be the class of all regular operators with regular norm at most $1$, we would thus obtain a more general class of dilations. These are of relevance in a version of the Akcoglu--Sucheston theorem on $L^p$-spaces; see for
	instance \cite[p.~58ff.]{CoiRocWei78} or \cite[Section~3]{Pel81}.
	
	We shall however not pursue this category theoretical approach here since we wish to state all our results in a concrete and easily accessible way. The interested reader won't find it difficult to restate our results in the language of category theory.
\end{remarks}

Next we note that one can replace the convex hull in Theorem~\ref{thm:main-result} by somewhat larger sets.

\begin{remarks} \label{rem:subconvex-and-absolutely-convex-hull}
	(a) Let $V$ be a vector space over the field $\bbR$ or $\bbC$. The \emph{absolutely convex hull} of a subset $C \subseteq V$ is given by
	\begin{align*}
		\bigg\{\sum_{k=1}^n \lambda_k v_k: \; n \in \bbN, \; v_1,\dots,v_n \in C, \; \lambda_1,\dots,\lambda_n \in \mathbb{K} \text{ and } \sum_{k=1}^n \lvert \lambda_k \rvert \le 1\bigg\}.
	\end{align*}
	It coincides with the convex hull of the set $\{\lambda v: \; v \in C, \; \lambda \in \mathbb{K} \text{ and } \lvert \lambda \rvert = 1 \}$. On the other hand it is easy to see that, under the assumptions of Theorem~\ref{thm:main-result}, the set
	\begin{align*}
		\big\{ \lambda T: \; T \in \calT, \; \lambda \in \mathbb{K} \text{ and } \lvert \lambda \rvert = 1 \big\}
	\end{align*}
	has a simultaneous dilation in $\calX$; here $\mathbb{K}$ is the underlying scalar field of the spaces in $\calX$. Hence, Theorem~\ref{thm:main-result} implies that the weak operator closure of the absolutely convex hull of $\calT$ has a simultaneous dilation in $\calX$.
	
	(b) The arguments given in~(a) do no longer apply if one is looking for positive dilations as discussed in Remark~\ref{rem:positive-morphisms}(a). Indeed, if an operator $T$ has a positive dilation in some Banach lattice, then its negative $-T$ does not have a positive dilation, though. However, one can still combine Theorem~\ref{thm:main-result} with Proposition~\ref{prop:zero-operator} below. This way one obtains that the weak operator closure of the \emph{subconvex hull}
	\begin{align*}
		\bigg\{ \sum_{k=1}^n \lambda_k T_k: \; n \in \bbN,\; T_1, \dots, T_n \in \calT, \; \lambda_1,\dots,\lambda_n \in [0,1] \text{ and } \sum_{k=1}^n \lambda_k \le 1 \bigg\}
	\end{align*}
	admits a positive simultaneous dilation in $\calX$ if $\calT$ does so. This works since the proof of Proposition~\ref{prop:zero-operator} does not destroy the positivity structure of the dilation. In particular, one obtains the following version of Corollary~\ref{cor:convex-combinations-of-isometries}:
	
	Let $X$ be a class of Banach lattices that fulfills Assumptions~\ref{ass:framework}. Let $X \in \calX$ and let $\calT \subseteq \calL(X)$ be the set of all positive, invertible isometries. Then the weak operator closure of the subconvex hull of $\calT$ has a positive simultaneous dilation in $\calX$.
\end{remarks}

The rest of Section~\ref{sec:tool-kit}, as well as Sections~\ref{sec:elementary-properties}, \ref{sec:convex-combinations} and~\ref{sec:convex-combinations-simultaneous}, are devoted to the proof of Theorem~\ref{thm:main-result}. We start with two useful characterizations of simultaneous dilations in the remaining part of Section~\ref{sec:tool-kit}: a ``finitary characterization'' for simultaneous dilations which shows that it is actually sufficient to consider finite sets of operators and a result which shows that it suffices to establish the dilation equality only for powers/monomials of bounded degree.

In Section~\ref{sec:elementary-properties} we proceed with our preparations for the proof of Theorem~\ref{thm:main-result}. Note it suffices to show that simultaneous dilations behave well with respect to strong operator closures and with respect to taking convex combinations since the weak and the strong operator closure of a convex set coincide \cite[Corollary VI.1.5]{DunSch1958}. The stability result for strong closures is rather simple and will be given in Proposition~\ref{prop:stop-limits}. In Section~\ref{sec:elementary-properties} we also discuss a few further elementary properties of simultaneous dilations such as stability with respect to operator multiplication. The stability result for convex combinations is, however, much more involved. Therefore, we first prove in Section~\ref{sec:convex-combinations} that, if a set $\calT$ of operators has a simultaneous dilation in a class $\calX$, then every single convex combination of operators from $\calT$ also has a dilation in $\calX$. In Section~\ref{sec:convex-combinations-simultaneous} we then prove that the convex hull of $\calT$ actually has a simultaneous dilation. All results of Section~\ref{sec:convex-combinations} can be seen as special cases of results from Section~\ref{sec:convex-combinations-simultaneous}, so we introduce a bit of redundancy by considering those two cases separately. However, given the rather technical computations in those sections, we think that the reader might benefit from this redundancy.

\subsection*{A finitary characterization of dilations}

The following proposition shows that, when dealing with simultaneous dilations, one can restrict to finite sets of operators.

\begin{proposition} \label{prop:finitary}
	Fix $p \in (1,\infty)$ and let $\calX$ be a class of Banach spaces that fulfills Assumptions~\ref{ass:framework}. Let $X \in \calX$ and let $\calT \subseteq \calL(X)$. Then the following are equivalent:
	\begin{enumerate}
		\item[(i)] $\calT$ has a simultaneous dilation in $\calX$.
		\item[(ii)] Every finite subset of $\calT$ has a simultaneous dilation in $\calX$.
	\end{enumerate}
\end{proposition}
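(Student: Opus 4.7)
The implication (i) $\Rightarrow$ (ii) is immediate: the restriction of a simultaneous dilation of $\calT$ to any subset is a simultaneous dilation of that subset. For the converse, my plan is a standard ultraproduct argument. Let $I$ denote the set of all finite subsets of $\calT$ and, for each $F \in I$, fix a simultaneous dilation $(Y_F, J_F, Q_F, (U_T^F)_{T \in F})$ of $F$ in $\calX$. The sets $A_F := \{G \in I : F \subseteq G\}$ have the finite intersection property, so they extend to an ultrafilter $\calU$ on $I$. Set $Y := \prod_\calU Y_F$, which lies in $\calX$ by Assumption~\ref{ass:framework}(b). For each $T \in \calT$, extend the family $(U_T^F)$ to all of $I$ by setting $U_T^F := \Id_{Y_F}$ whenever $T \notin F$; the ultraproduct $U_T := \prod_\calU U_T^F$ is then an invertible isometry on $Y$.

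For the dilating maps, the definition of $J$ is straightforward: set $Jx := (J_F x)_\calU$, which is a contraction (in fact an isometry, since $Q_F J_F = \Id_X$ for each $F$). Constructing $Q$ requires a bit more care. The ultraproduct $\tilde Q \colon Y \to X^\calU$, $(y_F)_\calU \mapsto (Q_F y_F)_\calU$, is a contraction but takes values in $X^\calU$ rather than in $X$. This is where Assumption~\ref{ass:framework}(c) enters: since $X$ is reflexive, every bounded family $(x_F)_F$ in $X$ admits a weak limit along $\calU$, and the map $P \colon X^\calU \to X$, $(x_F)_\calU \mapsto \wlim_\calU x_F$, is a well-defined contractive projection with $P((x)_\calU) = x$ for constant families. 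Set $Q := P \circ \tilde Q \colon Y \to X$.

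To verify the dilation identity, fix $n \in \bbN_0$ and $T_1, \ldots, T_n \in \calT$. Every $F$ containing $\{T_1, \ldots, T_n\}$ belongs to $\calU$ and satisfies
\begin{equation*}
	Q_F \, U_{T_1}^F \cdots U_{T_n}^F \, J_F = T_1 \cdots T_n
\end{equation*}
by the simultaneous dilation property of $F$. Hence for every $x \in X$ the family $(Q_F U_{T_1}^F \cdots U_{T_n}^F J_F x)_F$ is $\calU$-equivalent to the constant family with value $T_1 \cdots T_n x$, so $\tilde Q \, U_{T_1} \cdots U_{T_n} \, J x = (T_1 \cdots T_n x)_\calU$. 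Applying $P$ yields $Q \, U_{T_1} \cdots U_{T_n} \, J x = T_1 \cdots T_n x$, as required.

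The only genuinely nontrivial point -- and the one I expect to be the main obstacle -- is the construction of $Q$: the naive ultraproduct only lands in $X^\calU$, so one must exploit reflexivity to retract back to $X$ via weak limits along $\calU$. Note that Assumption~\ref{ass:framework}(a) on $\ell^p$-stability plays no role in this proposition; only ultra-stability and reflexivity are used.
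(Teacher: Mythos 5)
Your proof is correct and is essentially the paper's own argument: the paper merely factors the same ultraproduct construction through an auxiliary lemma (Lemma~\ref{lem:extend-dilations-along-a-net}) on monotone nets of dilating sets, using the identical devices -- extending the isometries by $\Id_{Y_F}$ when $T \notin F$, an ultrafilter adapted to the inclusion order, and the retraction $X^\calU \to X$ via weak limits, which exists by reflexivity (Assumption~\ref{ass:framework}(c)). Your closing observations, that this retraction is the key point and that Assumption~\ref{ass:framework}(a) is not needed here, are both accurate.
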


To keep the proof of Proposition~\ref{prop:finitary} as transparent as possible we first show the following lemma which is, in a way, a more abstract version of the proposition.

\begin{lemma} \label{lem:extend-dilations-along-a-net}
	Fix $p \in (1,\infty)$, let $\calX$ be a class of Banach spaces that fulfills Assumptions~\ref{ass:framework} and let $X \in \calX$. Consider a net $(\calT_i)_{i \in I}$ of subsets of $\calL(X)$ and assume that this net is monotone, i.e.\ $\calT_j \supseteq \calT_i$ whenever $j \ge i$. If each set $\calT_i$ has a simultaneous dilation in $\calX$, then $\bigcup_{i \in I} \calT_i$ has a simultaneous dilation in $\calX$.
\end{lemma}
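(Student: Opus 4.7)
The plan is to construct the dilation via an ultraproduct. Since $I$ is directed, the tail sets $\{j \in I : j \ge i\}$ for $i \in I$ form a filter basis, so by Zorn's lemma we can extend it to an ultrafilter $\calU$ on $I$ with the property that $\{j : j \ge i\} \in \calU$ for every $i \in I$. Now, for each $i \in I$, pick a simultaneous dilation of $\calT_i$ in $\calX$: a space $Y_i \in \calX$, contractions $J_i\colon X \to Y_i$ and $Q_i\colon Y_i \to X$, and invertible isometries $U_T^{(i)} \in \calL(Y_i)$ for each $T \in \calT_i$. Set $Y = \prod_\calU Y_i$, which lies in $\calX$ by Assumption~\ref{ass:framework}(b).

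The contractions $J\colon X \to Y$ and $Q\colon Y \to X$ are built as follows. Define $J$ by $Jx = (J_i x)_\calU$; it is a linear contraction. For $Q$, given $(y_i)_\calU \in Y$ with a bounded representative, the net $(Q_i y_i)_{i \in I}$ is bounded in $X$; since $X$ is reflexive (Assumption~\ref{ass:framework}(c)), the functional
\begin{equation*}
	X^* \ni x^* \;\longmapsto\; \lim_{i \to \calU} \langle x^*, Q_i y_i \rangle
\end{equation*}
defines an element of $X^{**} = X$, which we call $Q((y_i)_\calU)$. The map $Q$ is well-defined (two representatives differ by a $\calU$-null sequence), linear, and contractive with $\|Q\| \le 1$.

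For each $T \in \bigcup_{i \in I} \calT_i$ fix some $i_T \in I$ with $T \in \calT_{i_T}$; by monotonicity $T \in \calT_i$ for every $i \ge i_T$, a set belonging to $\calU$. Set $W_T^{(i)} = U_T^{(i)}$ whenever $T \in \calT_i$ and $W_T^{(i)} = \Id_{Y_i}$ otherwise. Then $U_T \coloneqq \prod_\calU W_T^{(i)}$ is a well-defined invertible isometry on $Y$. To check the dilation equation, take finitely many $T_1, \dots, T_n \in \bigcup_i \calT_i$. By directedness there is $i_0$ with $T_1, \dots, T_n \in \calT_{i_0}$, and hence $T_1, \dots, T_n \in \calT_i$ for all $i \ge i_0$. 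For such $i$ the simultaneous dilation of $\calT_i$ yields
\begin{equation*}
	Q_i \, W_{T_1}^{(i)} \cdots W_{T_n}^{(i)} \, J_i x = Q_i \, U_{T_1}^{(i)} \cdots U_{T_n}^{(i)} \, J_i x = T_1 \cdots T_n x
\end{equation*}
for every $x \in X$. Since $\{i : i \ge i_0\} \in \calU$, applying the weak ultrafilter limit that defines $Q$ produces $Q U_{T_1} \cdots U_{T_n} J x = T_1 \cdots T_n x$, as required.

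The main obstacle is the construction of $Q$: the naive ultraproduct of the $Q_i$ lands in $X^\calU$, not in $X$, so one must use reflexivity together with the weak ultrafilter limit to pull the target back to $X$ itself. Everything else is a bookkeeping exercise with the tail-fineness of $\calU$ and the directedness of $I$.
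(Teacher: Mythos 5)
Your proof is correct and is essentially the paper's own argument: the same ultrafilter adapted to the tail sets, the same ultraproduct $Y = \prod_\calU Y_i$ with identity-padded isometries, and the same use of reflexivity to define $Q$ via the weak limit along $\calU$ (the paper merely factors $J$ and $Q$ explicitly through $X^\calU$ in a commuting diagram, where you write the composites directly).
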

\begin{proof}
	Choose an ultrafilter $\calU$ on $I$ that contains the filter base $\big\{ \{j \in I: \, j \ge i\} : \; i \in I \big\}$.
	For each $i \in I$ we can find a Banach space $Y_i \in \calX$, contractions $J_i\colon X \to Y_i$ and $Q_i\colon Y_i \to X$ and invertible isometries $U_{i,T} \in \calL(Y_i)$ (for $T \in \calT_i$) with
	\begin{align*}
		T_1 \cdots T_n = Q_i \, U_{i,T_1} \cdots U_{i,T_n} J_i
	\end{align*}
	for all $n \in \bbN_0$ and all $T_1,\dots,T_n \in \calT_i$. Now, define $Y \coloneqq \prod_\calU Y_i$ as well as $J \coloneqq \prod_\calU J_i \colon X^\calU \to Y$ and $Q \coloneqq \prod_\calU Q_i \colon Y \to X^\calU$. Moreover, we define $U_T \coloneqq \prod_\calU \tilde{U}_{i,T} \in \calL(Y)$ for each $T \in \bigcup_{i \in I} \mathcal{T}_i$, where $\tilde{U}_{i,T} = U_{i,T}$ if $T \in \calT_i$ and $\tilde{U}_{i,T} = \Id_{Y_i}$ otherwise. Then the diagram
	\begin{center}
		\begin{tikzcd}
			Y \arrow{rrr}{U_{T_1} \dots U_{T_n}} & & & Y \arrow{d}{Q} \\
			X^\calU \arrow{u}{J} \arrow{rrr}{(T_1 \cdots T_n)^\calU} & & & X^\calU \arrow{d} \\
			X \arrow{u} \arrow{rrr}{T_1 \cdots T_n} & & & X
		\end{tikzcd}
	\end{center}
	commutes for every $n \in \bbN_0$ and for all $T_1,\dots,T_n \in \cup_{i \in I} \calT_i$; here, the mapping $X \to X^\calU$ denotes the canonical injection and $X^\calU \to X$ the mapping induced by the weak limit along $\calU$ (which exists since $X$ is reflexive). This proves the assertion.
\end{proof}

\begin{proof}[Proof of Proposition~\ref{prop:finitary}]
	The implication ``(i) $\Rightarrow$ (ii)'' is obvious. So assume that~(ii) holds. If we denote the family of all finite subsets of $\calT$ by $\mathfrak{F}$ and apply Lemma~\ref{lem:extend-dilations-along-a-net} to the net $(\mathcal{F})_{\mathcal{F} \in \mathfrak{F}}$, we obtain~(i).
\end{proof}

\subsection*{\texorpdfstring{$N$}{N}-dilations}

For an operator $T \in \mathcal{L}(X)$, consider the dilation equality
\begin{align*}
	T^n = Q \, U^n J
\end{align*}
from Definition~\ref{def:dilations}(a). For a dilation, we need a Banach space $Y$ and operators $J$, $Q$ and $U$ for which the  equality is satisfied for all $n \in \bbN_0$. However, we shall see in Sections~\ref{sec:convex-combinations} and~\ref{sec:convex-combinations-simultaneous} that it is much easier to achieve this for the first $N$ powers only. In this subsection we show that this is, in a sense, sufficient to obtain a dilation of $T$.

\begin{definition} \label{def:N-dilations}
	Let $\calX$ be a class of Banach spaces, let $X \in \calX$ and $N \in \bbN$.
	\begin{enumerate}
		\item[(a)] An operator $T \in \calL(X)$ has a \emph{$N$-dilation} in $\calX$ if there exist a space $Y \in \calX$, linear contractions $J\colon X \to Y$, $Q\colon Y \to X$ and a linear invertible isometry $U \in \calL(Y)$ such that for all $n \in \{0,\dots,N\}$
		\begin{align*}
			T^n = QU^nJ.
		\end{align*}
		\item[(b)] A set of operators $\calT \subseteq \calL(X)$ has a \emph{simultaneous $N$-dilation} in $\calX$ if there exist a space $Y \in \calX$, linear contractions $J\colon X \to Y$, $Q\colon Y \to X$ and invertible isometries $U_T \in \calL(Y)$ (for $T \in \calT$) such that the equality
		\begin{align*}
			T_1 \dots T_n = Q \, U_{T_1} \dots U_{T_n} \, J
		\end{align*}
		holds for all $n \in \{0,\dots,N\}$ and all $T_1,\dots, T_n \in \calT$.
	\end{enumerate}
\end{definition}

Note that, if $T$ or $\calT$ has a (simultaneous) $N$-dilation in $\calX$, then it also has a (simultaneous) $M$-dilation in $\calX$ for every $M \le N$. 

For single operators on Hilbert spaces, $N$-dilations have been studied in various contexts in the literature, for instance in \cite{Neunzert1963}, \cite{Thompson1982}, \cite[Section~3]{Nagy2013} and~\cite[Sections~1--3]{Levy2014}. Moreover, there is a concept that one might call a \emph{commutative simultaneous $N$-dilation} of a set of commuting operators; this has also been studied on finite dimensional Hilbert spaces, see e.g.~\cite{McCarthy2013} and \cite[Sections~4--5]{Levy2014}. 

For our purposes, $N$-dilations are essential for the construction of dilations for convex combinations of given operators in Sections~\ref{sec:convex-combinations} and~\ref{sec:convex-combinations-simultaneous}. The following proposition shows that, in order to construct a (simultaneous) dilation, it is actually sufficient to construct a (simultaneous) $N$-dilation for each $N$.

\begin{proposition} \label{prop:N-dilations}
	Fix $p \in (1,\infty)$ and let $\calX$ be a class of Banach spaces which fulfils Assumptions~\ref{ass:framework}. Let $X \in \calX$ and consider an operator $T \in \calL(X)$ and a set of operators $\calT \subseteq \calL(X)$.
	\begin{enumerate}
		\item[(a)] If $T$ has an $N$-dilation in $\calX$ for each $N \in \bbN$, then $T$ has a dilation in $\calX$.
		\item[(b)] If $\calT$ has a simultaneous $N$-dilation in $\calX$ for each $N \in \bbN$, then $\calT$ has a simultaneous dilation in $\calX$.
	\end{enumerate}
\end{proposition}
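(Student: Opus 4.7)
Part (a) is the special case of part (b) with $\calT = \{T\}$, since a simultaneous $N$-dilation of a singleton is precisely an $N$-dilation in the sense of Definition~\ref{def:N-dilations}(a). My plan is therefore to prove (b) by an ultraproduct argument that closely mimics the construction in the proof of Lemma~\ref{lem:extend-dilations-along-a-net}, with the monotone net of subsets replaced by the natural numbers indexing the level of dilation.

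For each $N \in \bbN$ I fix a simultaneous $N$-dilation: a space $Y_N \in \calX$, contractions $J_N\colon X \to Y_N$ and $Q_N\colon Y_N \to X$, and invertible isometries $U_{N,T} \in \calL(Y_N)$ for $T \in \calT$. Choose a free ultrafilter $\calU$ on $\bbN$, i.e.\ one containing every cofinite set. Set $Y \coloneqq \prod_\calU Y_N$, which lies in $\calX$ by ultra-stability (Assumption~\ref{ass:framework}(b)). Define $J \coloneqq \prod_\calU J_N\colon X^\calU \to Y$, $Q \coloneqq \prod_\calU Q_N\colon Y \to X^\calU$, and $U_T \coloneqq \prod_\calU U_{N,T} \in \calL(Y)$ for each $T \in \calT$; each $U_T$ is an invertible isometry with inverse $\prod_\calU U_{N,T}^{-1}$. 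Since $X$ is reflexive (Assumption~\ref{ass:framework}(c)), the map $\pi\colon X^\calU \to X$, $(x_N)_\calU \mapsto \wlim_\calU x_N$, is a well-defined contraction and left-inverts the canonical isometric embedding $\iota\colon X \to X^\calU$, $x \mapsto (x)_\calU$. I then set $\tilde J \coloneqq J\iota\colon X \to Y$ and $\tilde Q \coloneqq \pi Q\colon Y \to X$, both contractions.

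To verify the simultaneous dilation identity, fix $n \in \bbN_0$ and $T_1,\dots,T_n \in \calT$. The set $\{N \in \bbN : N \ge n\}$ is cofinite and hence belongs to $\calU$, so for $\calU$-almost every $N$ the equality $T_1\cdots T_n = Q_N U_{N,T_1}\cdots U_{N,T_n} J_N$ holds as operators $X \to X$. For $x \in X$, this identifies the class $Q \, U_{T_1}\cdots U_{T_n} J \iota(x) = (Q_N U_{N,T_1}\cdots U_{N,T_n} J_N x)_\calU$ with the constant class $\iota(T_1\cdots T_n x)$; applying $\pi$ and using $\pi\iota = \Id_X$ yields $\tilde Q \, U_{T_1}\cdots U_{T_n} \, \tilde J \, x = T_1\cdots T_n x$, as required.

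I do not anticipate any real obstacle beyond careful bookkeeping: the argument is genuinely parallel to Lemma~\ref{lem:extend-dilations-along-a-net}, and the only new observation is that the role played there by monotonicity of $(\calT_i)$ (which ensured each $T$ eventually lies in $\calT_i$ along $\calU$) is played here by the fact that every simultaneous $N$-dilation is automatically a simultaneous $n$-dilation for $n \le N$, together with the cofiniteness of $\{N \ge n\}$ in $\bbN$.
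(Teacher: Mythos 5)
Your proof is correct and follows essentially the same route as the paper: an ultraproduct of the $N$-dilation spaces along a free ultrafilter on $\bbN$, with the canonical embedding $X \to X^\calU$ and the weak-limit map $X^\calU \to X$ (available by reflexivity) closing the dilation diagram. The paper presents the verification via a commuting diagram rather than the explicit pointwise computation, but the underlying argument, including the key observation that cofiniteness of $\{N \ge n\}$ makes the dilation identity hold $\calU$-almost everywhere, is identical.
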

\begin{proof}
	The proof is not too different from the proof of Lemma~\ref{lem:extend-dilations-along-a-net}. First note that (a) follows from~(b) by setting $\calT = \{T\}$, so it suffices to prove~(b). By Assumptions~\ref{ass:framework} there exist, for every $N \in \bbN$, a space $Y_N \in \calL(X)$, linear contractions $J_N\colon X \to Y_N$ and $Q_N\colon Y_N \to X$ and linear invertible isometries $U_{N,T} \in \calL(Y_N)$ (for $T \in \calT$) such that for $n \in \{0,\dots,N\}$ and all $T_1,\dots,T_n \in \calT$ one has the dilation equality
	\begin{align*}
		T_1 \cdots T_n  = Q_N \,  U_{N,T_1} \cdots U_{N,T_n} J_N.
	\end{align*}
	
	Fix a free ultrafilter $\calU$ on $\bbN$; we define $Y \coloneqq \prod_{\calU} Y_N$, $J \coloneqq \prod_\calU J_N \colon X^\calU \to Y$ and $Q \coloneqq \prod_\calU Q_N \colon Y \to X^\calU$. 
	Also set $U_T \coloneqq \prod_\calU U_{T,N}$ for every $T \in \calT$. The operators $U_T$ are invertible isometries on $Y$ and according to Assumptions~\ref{ass:framework} we have $Y \in \calX$. Moreover, the following diagram commutes for every $n \in \bbN_0$ and all $T_1,\dots,T_n \in \calT$:
	\begin{center}
		\begin{tikzcd}
			Y \arrow{rrr}{U_{T_1} \cdots U_{T_n}} & & & Y \arrow{d}{Q} \\
			X^\calU \arrow{u}{J} \arrow{rrr}{(T_1 \cdots T_n)^\calU} & & & X^\calU \arrow{d} \\
			X \arrow{u} \arrow{rrr}{T_1 \cdots T_n} & & & X
		\end{tikzcd}
	\end{center}
	Here, $X \to X^\calU$ denotes the canonical embedding of $X$ into its ultrapower and $X^\calU \to X$ denotes the operator induced by the weak limit along $\calU$ (which exists since $X$ is reflexive). The diagram shows that $\calT$ has a simultaneous dilation.
\end{proof}

\section{Elementary properties of simultaneous dilations} \label{sec:elementary-properties}

\subsection*{Products of dilating operators} If two operators $T$ and $S$ on a Banach space $X$ have a dilation in a class of Banach spaces $\calX$, then it is by no means clear whether the product $ST$ has a dilation in $\calX$, too. 
If however $T$ and $S$ have a simultaneous dilation, then obviously so does $ST$. Let us note this -- in a slightly more general form -- in the following proposition.

\begin{proposition} \label{prop:product-of-dilating-operators}
	Let $\calX$ be a class of Banach spaces, let $X \in \calX$ and assume that $\calT \subseteq \calL(X)$ has a simultaneous dilation in $\calX$. Then the \emph{multiplicative semigroup generated by $\calT$}, i.e.\ the set $\{T_1 \cdots T_n: \, n \in \bbN, \; T_1,\dots,T_n \in \calT\}$, has a simultaneous dilation in $\calX$, too (and in fact, to the same space as $\calT$).
\end{proposition}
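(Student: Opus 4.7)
The plan is to reuse the same dilation data (the space $Y$, the maps $J,Q$, and the isometries $U_T$ for $T \in \calT$) and simply to aggregate the isometries $U_T$ into isometries indexed by elements of the multiplicative semigroup $\calS$ generated by $\calT$. Concretely, for every $S \in \calS$, I would fix once and for all (by the axiom of choice) a representation $S = T^{(S)}_1 \cdots T^{(S)}_{k(S)}$ with $T^{(S)}_j \in \calT$, and then define
\begin{equation*}
    V_S \coloneqq U_{T^{(S)}_1} \cdots U_{T^{(S)}_{k(S)}} \in \calL(Y).
\end{equation*}
Since each $U_{T^{(S)}_j}$ is an invertible isometry of $Y$, so is $V_S$ as a finite product of such. (One subtle point: the operator $V_S$ does depend on the chosen representation of $S$, not only on $S$ itself, but this is irrelevant for our purposes since we only need \emph{some} invertible isometry attached to each $S$.)

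Next I would verify the simultaneous dilation identity for $\calS$. Take $n \in \bbN_0$ and $S_1, \ldots, S_n \in \calS$; writing each $S_i = T^{(S_i)}_1 \cdots T^{(S_i)}_{k(S_i)}$ using the chosen representations, the concatenation
\begin{equation*}
    T^{(S_1)}_1, \ldots, T^{(S_1)}_{k(S_1)}, T^{(S_2)}_1, \ldots, T^{(S_n)}_{k(S_n)}
\end{equation*}
is a finite tuple in $\calT$, whose ordered product equals $S_1 \cdots S_n$. Applying the assumed simultaneous dilation identity for $\calT$ to this tuple gives
\begin{equation*}
    S_1 \cdots S_n \;=\; Q\, U_{T^{(S_1)}_1} \cdots U_{T^{(S_n)}_{k(S_n)}}\, J \;=\; Q\, V_{S_1} \cdots V_{S_n}\, J,
\end{equation*}
which is the required simultaneous dilation equality for $\calS$. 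For $n=0$ the identity reduces to $\Id_X = QJ$, which follows from the $n=0$ case of the dilation identity for $\calT$.

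There is no genuine obstacle here; the proposition is essentially a bookkeeping observation. The only point worth flagging is that the assignment $S \mapsto V_S$ need not be a semigroup homomorphism (two different factorisations of the same $S$ will in general produce two different invertible isometries of $Y$), but Definition~\ref{def:dilations}(b) does not require any such compatibility — it only prescribes the dilation identity for each finite tuple of operators from the indexing set, and that identity has just been checked. In particular, the same space $Y$ works for $\calS$ as for $\calT$, yielding the stated ``in fact, to the same space'' claim.
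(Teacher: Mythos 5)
Your proof is correct and is essentially the argument the paper has in mind --- indeed the paper states this proposition without any written proof, regarding it as an immediate consequence of Definition~\ref{def:dilations}(b): the dilation identity for $\calT$ already holds for arbitrary finite tuples (with repetitions), so concatenating representations of the $S_i$ and grouping the corresponding $U$'s yields the identity for the semigroup on the same space $Y$. Your remark that $S \mapsto V_S$ need not be well defined as a homomorphism, and that the definition only demands \emph{some} invertible isometry per element of the index set, is the one point of substance, and you resolve it correctly.
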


\subsection*{Strong operator limits} The next proposition shows that simultaneous dilations are preserved by strong operator limits; this proves the topological part of Theorem~\ref{thm:main-result}.

\begin{proposition} \label{prop:stop-limits}
	Fix $p \in (1,\infty)$ and let $\calX$ be a class of Banach spaces which fulfills Assumptions~\ref{ass:framework}. Let $X \in \calX$ and let $\calT \subseteq \calL(X)$ be a set of operators which has a simultaneous dilation in $\calX$. Then the strong operator closure of $\calT$ has a simultaneous dilation in $\calX$, too.
\end{proposition}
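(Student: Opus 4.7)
The plan is to combine the finitary reduction Proposition~\ref{prop:finitary} with an ultraproduct construction analogous to the one used in Lemma~\ref{lem:extend-dilations-along-a-net} and Proposition~\ref{prop:N-dilations}; the strong operator closure will be absorbed by passing to a suitable ultralimit of the given dilating isometries indexed over a net that approximates the limit operators.

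By Proposition~\ref{prop:finitary} it suffices to construct a simultaneous dilation in $\calX$ for every finite subset $\{S_1,\dots,S_m\}$ of the strong operator closure $\overline{\calT}^{\text{sot}}$. Fix such a finite set. For each $j \in \{1,\dots,m\}$ choose a net $(T_{j,\alpha_j})_{\alpha_j \in A_j} \subseteq \calT$ with $T_{j,\alpha_j} \to S_j$ strongly; set $A \coloneqq \prod_{j=1}^m A_j$ with the product order and, for $\alpha = (\alpha_1,\dots,\alpha_m) \in A$, write $T_{j,\alpha} \coloneqq T_{j,\alpha_j}$. Pick an ultrafilter $\calU$ on $A$ refining the order filter $\bigl\{ \{\beta \in A : \beta \ge \alpha\} : \alpha \in A \bigr\}$.

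Let $Y \in \calX$, $J\colon X\to Y$, $Q\colon Y \to X$ and $U_T \in \calL(Y)$ (for $T \in \calT$) witness the simultaneous dilation of $\calT$. Put $\widetilde{Y} \coloneqq Y^\calU$, which lies in $\calX$ by Assumption~\ref{ass:framework}(b); define $\widetilde{J} \coloneqq J^\calU \circ \iota_X \colon X \to \widetilde{Y}$ and $\widetilde{Q} \coloneqq \pi_X \circ Q^\calU \colon \widetilde{Y} \to X$, where $\iota_X \colon X \to X^\calU$ is the canonical embedding and $\pi_X \colon X^\calU \to X$ is the weak-limit map (well-defined and contractive by reflexivity of $X$). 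For every $j$ set $\widetilde{U}_j \coloneqq \prod_\calU U_{T_{j,\alpha}} \in \calL(\widetilde{Y})$, which is an invertible isometry since each $U_{T_{j,\alpha}}$ is.

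It remains to verify the dilation equality. Fix $n \in \bbN_0$ and indices $k_1,\dots,k_n \in \{1,\dots,m\}$. For every $\alpha \in A$ the simultaneous dilation of $\calT$ gives
\begin{equation*}
	T_{k_1,\alpha} \cdots T_{k_n,\alpha} = Q \, U_{T_{k_1,\alpha}} \cdots U_{T_{k_n,\alpha}} \, J.
\end{equation*}
Applied to an arbitrary $x \in X$ and passed through the ultraproduct, the right-hand side becomes $Q^\calU \widetilde{U}_{k_1} \cdots \widetilde{U}_{k_n} J^\calU \iota_X(x)$, so composing with $\pi_X$ yields $\widetilde{Q}\,\widetilde{U}_{k_1}\cdots\widetilde{U}_{k_n}\widetilde{J}\, x$. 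For the left-hand side, the operators $T_{j,\alpha_j}$ are contractions, hence uniformly bounded, so an elementary $\epsilon$/triangle argument shows that strong convergence $T_{j,\alpha_j} \to S_j$ is preserved under composition: $T_{k_1,\alpha}\cdots T_{k_n,\alpha}\, x \to S_{k_1}\cdots S_{k_n}\, x$ strongly along the order filter, hence along $\calU$. Therefore $\pi_X\bigl((T_{k_1,\alpha}\cdots T_{k_n,\alpha} x)_\calU\bigr) = S_{k_1}\cdots S_{k_n} x$, establishing
\begin{equation*}
	S_{k_1} \cdots S_{k_n} = \widetilde{Q}\, \widetilde{U}_{k_1} \cdots \widetilde{U}_{k_n}\, \widetilde{J}.
\end{equation*}
Thus $\{S_1,\dots,S_m\}$ has a simultaneous dilation in $\calX$, completing the reduction.

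The only mildly subtle step is the passage from strong convergence of the factors to strong convergence of their products; everything else is a direct recycling of the ultraproduct diagram already used earlier in the paper. There is no genuine obstacle here because the contraction assumption on all operators in $\calT$ (implicit in their having a dilation) provides the uniform norm bound needed to make composition continuous on bounded sets for the strong operator topology.
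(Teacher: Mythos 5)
Your proof is correct and follows essentially the same route as the paper's: an ultrapower of the dilation space $Y$ along an ultrafilter refining an order filter on an index set, with the dilation equality verified via joint continuity of operator multiplication on bounded sets for the strong operator topology together with the reflexivity-induced weak-limit map. The only differences are cosmetic: the paper skips the finitary reduction by choosing one common directed set (the neighborhood filter of $0$ in the strong operator topology) that indexes approximating nets for all elements of the closure simultaneously, and it places the weak-limit map on $Y^\calU \to Y$ (using reflexivity of $Y$) rather than on $X^\calU \to X$.
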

\begin{proof}
	There exist a space $Y \in \calL(X)$, linear contractions $J\colon X \to Y$ and $Q\colon Y \to X$ and invertible isometries $U_T \in \calL(Y)$ (for $T \in \calT$) such that the equality $\prod_{k=1}^n T_k = Q\, \prod_{k=1}^n U_{T_k} \; J$ holds for each $n \in \bbN$ and all $T_1,\dots,T_n \in \calT$.
	
	Let $\calS$ denote the strong operator closure of $\calT$ in $\calL(X)$. We can find a directed set $I$ such that, for each $S \in \calS$, there exists a net $(T_{i,S})_{i \in I}$ in $\calT$ that converges strongly to $S$; for instance, we can take $I$ to be the neighborhood filter of $0$ in $\calL(X)$ with respect to the strong operator topology (endowed with the canonical order). Choose an ultrafilter $\calU$ on $I$ which contains the filter base
	\begin{align*}
		\big\{ \{j \in I: \, j \ge i\} : \; i \in I \big\}.
	\end{align*}
	Let $\tilde J \colon Y \to Y^\calU$ be the canonical injection and let $\tilde Q \colon Y^\calU \to Y$ be the operator induced by the weak limit along $\calU$ (which exists since $Y$ is reflexive). For each $S \in \calS$ we choose a net $(T_{i,S})_{i \in I}$ that converges strongly to $S$ and we define $\tilde U_S \coloneqq \prod_\calU U_{T_{i,S}} \in \calL(Y^\calU)$. 
	
	Now, let $n \in \bbN$ and $S_1,\dots,S_n \in \calS$. We show that $S_1 \cdots S_n = Q \tilde Q\tilde U_{S_1} \cdots \tilde U_{S_n} \tilde J J$. Indeed, we have on the one hand
	\begin{align*}
		\prod_{k=1}^n S_k & = \prod_{k=1}^n \stoplim_{i \to \calU} T_{i,S_k} = \stoplim_{i \to \calU} \prod_{k=1}^n T_{i,S_k} = \stoplim_{i \to \calU} Q \, \prod_{k=1}^n U_{T_{i,S_k}} \; J,
	\end{align*}
	where $\stoplim$ denotes the limit with respect to the strong operator topology; the first equality follows from the choice of the ultrafilter $\calU$ and for the second equality we used that operator multiplication is jointly continuous with respect to the strong operator topology on bounded sets. On the other hand, we have for every $x \in X$
	\begin{align*}
		Q \tilde Q \, \prod_{k=1}^n \tilde U_{S_k} \; \tilde J Jx & = Q \tilde Q \big(\prod_{k=1}^n U_{T_{i,S_k}} \; Jx\big)_\calU \\
		& = Q \wlim_{i \to \calU} \big( \prod_{k=1}^n U_{T_{i,S_k}} \; Jx \big) = \wlim_{i \to \calU} \big( Q \, \prod_{k=1}^n U_{T_{i,S_k}} \; Jx \big),
	\end{align*}
	where $\wlim$ denotes the weak limit. This proves the assertion.
\end{proof}

We point out that the above proof does not work for the weak operator closure of $\calT$ since operator multiplication is in general not jointly continuous with respect to the weak operator topology (not even on bounded sets of operators). However, one can show the following result by a similar technique as in the above proof: let $(T_i)_{i \in I}$ be a net in $\calL(X)$, let $T \in \calL(X)$ and assume that, for each power $n \in \bbN$, the net $\big((T_i)^n\big)_{i \in I}$ converges to $T^n$ with respect to the weak operator topology. If each operator $T_i$ has a dilation in $\calX$ (not necessarily a simultaneous dilation for all $T_i$), then $T$ has a dilation in $\calX$, too (provided that $\calX$ fulfills Assumptions~\ref{ass:framework}). See also~\cite{Pel81} and the discussion in Remark~\ref{rem:peller-power-approximation-vs-our-approach}.

\subsection*{The zero operator} We can add the zero operator to a given set of simultaneously dilating operators. In conjunction with Theorem~\ref{thm:convex-combinations-simultaneous} this shows that simultaneous dilations are stable with respect to \emph{subconvex combinations}, i.e.\ linear combinations with positive coefficients adding up to at most $1$; see Remark~\ref{rem:subconvex-and-absolutely-convex-hull}(b).

\begin{proposition} \label{prop:zero-operator}
	Fix $p \in (1,\infty)$ and let $\calX$ be a class of Banach spaces which fulfills Assumptions~\ref{ass:framework}. Let $X \in \calX$ and assume that $\calT \subseteq \calL(X)$ has a simultaneous dilation in $\calX$. Then $\calT \cup \{0\}$ has a simultaneous dilation in $\calX$.
\end{proposition}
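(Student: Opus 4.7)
The plan is to invoke Proposition~\ref{prop:N-dilations}(b), so that it suffices to construct, for every $N \in \bbN$, a simultaneous $N$-dilation of $\calT \cup \{0\}$ in $\calX$. Starting from a given simultaneous dilation $(Y, J, Q, (U_T)_{T \in \calT})$ of $\calT$, I would work on the Banach space $Y_N := \ell^p_{N+1}(Y)$, which lies in $\calX$ by Assumption~\ref{ass:framework}(a). On $Y_N$ define the contractive embedding $J_N x := (Jx, 0, \ldots, 0)$ and the contractive projection $Q_N(y_0, \ldots, y_N) := Q y_0$; for each $T \in \calT$ let $U_{N,T}$ act diagonally as $(y_0, \ldots, y_N) \mapsto (U_T y_0, \ldots, U_T y_N)$; and, as the dilation of the zero operator, let $U_{N,0}$ be the cyclic shift $(y_0, y_1, \ldots, y_N) \mapsto (y_N, y_0, \ldots, y_{N-1})$. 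Both $U_{N,T}$ and $U_{N,0}$ are then invertible isometries of $\ell^p_{N+1}(Y)$ --- the latter because it merely permutes the $N+1$ coordinates.

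To verify the $N$-dilation equality, I would exploit the fact that $U_{N,0}$ commutes with every $U_{N,T}$ for $T \in \calT$. Consider a word $T_1 \cdots T_n$ of length $n \le N$ with entries in $\calT \cup \{0\}$, and let $k$ be the number of indices $i$ with $T_i = 0$. Tracking the action on $J_N x$, the active (nonzero) coordinate is displaced only by the $U_{N,0}$ factors, ending up in position $k \bmod (N+1) = k$ since $k \le n \le N$; meanwhile its $Y$-content is acted on successively by the $U_{T_i}$ corresponding to the non-zero $T_i$, in the natural right-to-left order. When $k = 0$, the active coordinate is still at position $0$ and $Q_N$ returns $Q U_{T_1} \cdots U_{T_n} J x = T_1 \cdots T_n x$. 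When $k \ge 1$, the active coordinate lies at some position in $\{1, \ldots, N\}$, so $Q_N$ returns $Q(0) = 0$, matching $T_1 \cdots T_n = 0$ (which holds since the word contains a zero factor).

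The only real delicacy is that the dilation of $0$ must be an \emph{invertible} isometry, which forbids a naive one-sided truncated shift on finitely many copies of $Y$; on the other hand, an infinite two-sided shift on $\ell^p(\bbZ;Y)$ would leave the finite $\ell^p$-power closure guaranteed by Assumptions~\ref{ass:framework}. The cyclic shift on $N+1$ copies reconciles these two constraints precisely for words of length $\le N$, which is exactly the situation that $N$-dilations are tailored for; applying Proposition~\ref{prop:N-dilations}(b) then yields the full simultaneous dilation of $\calT \cup \{0\}$. As an aside, since coordinate permutations and diagonal embeddings respect the lattice structure on $\ell^p_{N+1}(Y)$, the same construction also delivers the positive version foreshadowed in Remark~\ref{rem:subconvex-and-absolutely-convex-hull}(b).
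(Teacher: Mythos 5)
Your proposal is correct and is essentially the paper's own proof: the paper likewise reduces to simultaneous $N$-dilations via Proposition~\ref{prop:N-dilations}, works on $\ell^p_{N+1}(Y)$, lets the isometries $U_T$ act diagonally, and dilates the zero operator by a cyclic shift of the coordinates (the paper merely phrases this as a two-step composition, first dilating the set $\{U_T : T \in \calT\} \cup \{0\}$ over $Y$ and then splicing in the original $J$ and $Q$, and uses the shift in the opposite direction, both of which are immaterial). Your closing remark on positivity also matches the paper's observation in Remark~\ref{rem:subconvex-and-absolutely-convex-hull}(b).
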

\begin{proof}
	By assumption we can find a space $Y \in \calX$, linear contractions $J\colon X \to Y$ and $Q\colon Y \to X$ and invertible isometries $U_T \in \calL(Y)$ (for $T \in \calT$) such that
	\begin{align*}
		T_1 \cdots T_n = Q \, U_{T_1} \cdots U_{T_n} J
	\end{align*}
	for every $n \in \bbN_0$ and all $T_1,\dots, T_n \in \calT$. Define $U_0 \coloneqq 0 \in \calL(Y)$ (which is of course neither invertible, nor an isometry). It suffices to show that the set of operators $\calU \coloneqq \big\{U_T: T \in \calT \cup \{0\}\big\}$ has a simultaneous dilation in $\calX$ because then the diagram
	\begin{center}
		\begin{tikzcd}
			\tilde Y \arrow{rrr}{\tilde V_{U_{T_1}} \cdots \tilde V_{U_{T_n}}} & & & \tilde Y \arrow{d}{\tilde Q} \\
			Y \arrow{rrr}{U_{T_1} \cdots U_{T_n}} \arrow{u}{\tilde J} & & & Y \arrow{d}{Q} \\
			X \arrow{u}{J} \arrow{rrr}{T_1 \cdots T_n} & & & X
		\end{tikzcd}
	\end{center}
	commutes for an appropriate choice of $\tilde Y$, $\tilde J$, $\tilde Q$ and $V_U \in \calL(\tilde Y)$ (for $U \in \calU$) and for all $n \in \bbN_0$ and $T_1, \dots, T_n \in \calT \cup \{0\}$.
	
	Fix an arbitrary $N \in \IN$. In order to show that $\big\{U_T: T \in \calT \cup \{0\}\big\}$ has a simultaneous dilation in $\calX$ it suffices, according to Proposition~\ref{prop:N-dilations}, to construct a simultaneous $N$-dilation for $\calU$ in $\calX$. To this end, choose $\tilde Y \coloneqq \ell^p_{N+1}(Y) \in \calX$ and set $\tilde Jx = (x,0,\dots,0) \in \tilde Y$ for all $x \in Y$; for all $(x_1,\dots,x_{N+1}) \in \tilde Y$ we define $\tilde Q(x_1,\dots,x_{N+1}) = x_1 $ and 
	\begin{align*}
		V_U(x_1,\dots,x_{N+1}) = 
		\begin{cases}
			(U x_1,\dots,U x_{N+1}) \quad & \text{if } U \not= 0, \\
			(x_2,\dots,x_{N+1},x_1) \quad & \text{if } U = 0.
		\end{cases}
	\end{align*}
	Then one easily checks that for all $n \in \{0,\dots, N\}$ and all $U_1,\dots, U_n \in \calU$
	\begin{align*}
		U_1 \cdots U_n = \tilde Q \, V_{U_1} \dots V_{U_n} \tilde J
	\end{align*}
	Hence, $\calU$ indeed has a simultaneous $N$-dilation in $\calX$.
\end{proof}

\begin{remark}
	It is worthwhile pointing out that the reduction to $N$-dilations used in the proof of Proposition~\ref{prop:zero-operator} (by employing Proposition~\ref{prop:N-dilations}) is not as essential as it might seem at first glance. Indeed, if we add to Assumptions~\ref{ass:framework} the mild condition that, for each $X \in \calL(X)$, the vector-valued $\ell^p$-space $\ell^p(X) \coloneqq \ell^p(\bbZ;X)$ is contained in $\calX$, then we can define $Y \coloneqq \ell^p(\bbZ;X)$ and choose $V_0$ to be the left (or right) shift operator and immediately obtain a simultaneous dilation instead of only a simultaneous $N$-dilation of the set $\calU$ (where $J$, $Q$ and $V_U$ for $U \not= 0$ should be defined similarly as in the above proof of Proposition~\ref{prop:zero-operator}). Nevertheless, $N$-dilations and Proposition~\ref{prop:N-dilations} will be an indispensable tool for us in Sections~\ref{sec:convex-combinations} and~\ref{sec:convex-combinations-simultaneous}.
\end{remark}

\section{Dilation of convex combinations} \label{sec:convex-combinations}

The goal of this section is to prove the following theorem.

\begin{theorem} \label{thm:convex-combination}
	Fix $p \in (1,\infty)$ and let $\calX$ be a class of Banach spaces which fulfills Assumptions~\ref{ass:framework}. Let $X \in \calX$ and assume that $\calT \subseteq \calL(X)$ has a simultaneous dilation in $\calX$. If $T$ is in the convex hull of $\calT$, then $T$ has a dilation in $\calX$.
\end{theorem}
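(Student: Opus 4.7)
The plan is to transport the problem to the simultaneous dilation space, where $T$ becomes a convex combination of invertible isometries, and then to dilate that combination by a Peller-type probabilistic construction. Write $T = \sum_{k=1}^m \lambda_k T_k$ with $T_k \in \calT$, $\lambda_k > 0$ and $\sum_k \lambda_k = 1$ (discarding any $T_k$ with $\lambda_k = 0$). By hypothesis there exist $Y \in \calX$, contractions $J_0 \colon X \to Y$ and $Q_0 \colon Y \to X$, and invertible isometries $U_1, \dots, U_m \in \calL(Y)$ with $T_{i_1} \cdots T_{i_n} = Q_0 U_{i_1} \cdots U_{i_n} J_0$ for every word $(i_1, \dots, i_n) \in \{1, \dots, m\}^n$. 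Setting $V \coloneqq \sum_k \lambda_k U_k \in \calL(Y)$, expansion of $V^n$ into length-$n$ words yields $T^n = Q_0 V^n J_0$ for all $n \in \bbN_0$. It therefore suffices to dilate $V$, and by Proposition~\ref{prop:N-dilations} it suffices to construct, for each $N \in \bbN$, an $N$-dilation of $V$ in the class $\calX$.

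For such an $N$-dilation, consider the index set $\Omega \coloneqq \{1, \dots, m\}^{N+1}$ with product probability weights $w(\vec{i}) \coloneqq \lambda_{i_0} \cdots \lambda_{i_N}$, and let $\tilde Y$ be the $\ell^p$-direct sum of copies of $Y$ over $\Omega$ with these weights. Coordinatewise rescaling by $w(\vec{i})^{1/p}$ identifies $\tilde Y$ isometrically with $\ell^p_{m^{N+1}}(Y)$, which lies in $\calX$ by iterated application of Assumption~\ref{ass:framework}(a). Let $\sigma$ denote the cyclic shift on $\Omega$ and define
\begin{align*}
	(W f)(\vec{i}) \coloneqq U_{i_0}\, f(\sigma \vec{i}), \qquad \tilde J x \coloneqq (x)_{\vec{i} \in \Omega}, \qquad \tilde Q f \coloneqq \sum_{\vec{i} \in \Omega} w(\vec{i})\, f(\vec{i}).
\end{align*}
Because $\sigma$ preserves the product weight $w$ and each $U_{k}$ is an isometry, $W$ is an invertible isometry on $\tilde Y$. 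The map $\tilde J$ is isometric by the normalization $\sum_{\vec{i}} w(\vec{i}) = 1$, and $\tilde Q$ is a contraction by Jensen's inequality.

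A short induction then shows $(W^n f)(\vec{i}) = U_{i_0} U_{i_1} \cdots U_{i_{n-1}} f(\sigma^n \vec{i})$ for $0 \le n \le N+1$, where the indices $i_0, \dots, i_{n-1}$ still refer to the original coordinates of $\vec{i}$ (no cyclic wrap-around occurs). Substituting $f = \tilde J x$ and using that $w$ is a product measure, we obtain
\begin{align*}
	\tilde Q W^n \tilde J x = \sum_{\vec{i} \in \Omega} w(\vec{i})\, U_{i_0} \cdots U_{i_{n-1}} x = V^n x,
\end{align*}
since the summed-out weights satisfy $\sum_{i_n, \dots, i_N} \lambda_{i_n} \cdots \lambda_{i_N} = 1$. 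Composing with $(J_0, Q_0)$ produces an $N$-dilation of $T$ in $\calX$, and Proposition~\ref{prop:N-dilations} then upgrades this to a genuine dilation. The only nontrivial points in this plan are (i) the cyclic-shift-invariance of $w$, which makes $W$ an isometry, and (ii) the factorization of $w$ used in the final identity -- essentially the measure-preserving property and independence of a finite i.i.d.\ sample -- both of which should be routine.
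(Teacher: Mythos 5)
Your proof is correct, and although its outer scaffolding coincides with the paper's -- transport through the simultaneous dilation space so that the problem becomes dilating $V \coloneqq \sum_k \lambda_k U_k$, a convex combination of invertible isometries, build an $N$-dilation for every $N$, and invoke Proposition~\ref{prop:N-dilations} -- your core construction is genuinely different. The paper dilates on $(X^N)^{\calA}$, where $\calA$ is the set of maps $\{1,\dots,N\}\to\{1,\dots,m\}$: a direct sum over all labelings $\alpha$ of cyclic permutation operators on $X^N$ twisted by the isometries $T_{\alpha(k)}$, with $J$ and $Q$ spreading and collecting mass over all $N$ cyclic positions. The dilation identity then involves an average of operator products over all starting positions of the cycle, and showing that this average collapses to the correct expression, namely~\eqref{eq:critical-equation-for-dilation-equality}, is exactly what requires the orbit-counting arguments of Appendix~\ref{appendix:group-theory}. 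You instead place the cyclic structure on the sample space $\Omega = \{1,\dots,m\}^{N+1}$ itself and use a single twisted shift $(Wf)(\vec{i}) = U_{i_0} f(\sigma\vec{i})$ on a weighted $\ell^p$-sum -- a finite-horizon Bernoulli (skew-product) dilation. Since $W^n$ always reads the fixed coordinates $i_0,\dots,i_{n-1}$ of the evaluation point, there is no averaging over starting positions, and the identity $\tilde{Q}W^n\tilde{J} = V^n$ reduces to the factorization of the product weights (the summed-out coordinates contribute the factor $1$); no group theory is needed. The points demanding care are exactly the ones you addressed: all $\lambda_k$ must be strictly positive so that the rescaling identifying $\tilde{Y}$ with $\ell^p_{m^{N+1}}(Y) \in \calX$ is an isometric isomorphism (and, to be pedantic, one should conjugate $W$, $\tilde{J}$, $\tilde{Q}$ by this rescaling so that the $N$-dilation literally lives on a member of $\calX$), and the absence of cyclic wrap-around for $n \le N+1$, which is what confines the construction to $N$-dilations and makes Proposition~\ref{prop:N-dilations} indispensable -- the same limitation as in the paper. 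A side benefit of your route: if one first passes to rational coefficients with a common denominator, as the paper does in the proof of Theorem~\ref{thm:convex-combinations-simultaneous}, your weights become uniform, $\tilde{J}$ and $\tilde{Q}$ no longer depend on the coefficients, and your construction adapts without further changes to yield the simultaneous version as well.
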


In Section~\ref{sec:convex-combinations-simultaneous} we show the more general result that the convex hull of $\calT$ has a simultaneous dilation in $\calX$, which, in conjunction with Proposition~\ref{prop:stop-limits}, proves our main result Theorem~\ref{thm:main-result}. The next result, which is not immediate to prove with bare hands, is a direct consequence of Theorem~\ref{thm:convex-combination} and Proposition~\ref{prop:zero-operator}.

\begin{corollary} \label{cor:multiple-of-a-single-operator}
	Fix $p \in (1,\infty)$ and let $\calX$ be a class of Banach spaces which fulfills Assumptions~\ref{ass:framework}. Let $X \in \calX$ and assume that $T \in \calL(X)$ has a dilation in $\calX$. Then, for every $\lambda \in [0,1]$, the operator $\lambda T$ has a dilation in $\calX$.
\end{corollary}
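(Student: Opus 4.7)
The plan is to recognize that the statement is really a two-line combination of the two cited results, so the task is simply to assemble them in the right order. A naive attempt would be to scale the isometry $U$ in the dilation of $T$ by $\lambda$ and try $\lambda U$ as the dilating object for $\lambda T$; but $\lambda U$ is no longer an isometry (unless $|\lambda|=1$), so one cannot cheaply dilate a contractive multiple of a dilating operator. The whole point is that the convex combination machinery supplied by Theorem~\ref{thm:convex-combination} has to be invoked, and it requires a simultaneous dilation of a set that contains enough operators to form $\lambda T$ as a convex combination.

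First I would note that the singleton $\{T\}$ trivially has a simultaneous dilation in $\calX$: the data $(Y,J,Q,U)$ providing the dilation of $T$ already satisfies Definition~\ref{def:dilations}(b) for $\calT=\{T\}$, since $T_1\cdots T_n = T^n = QU^nJ = QU_{T_1}\cdots U_{T_n}J$. Next I would apply Proposition~\ref{prop:zero-operator} to enlarge this singleton to $\{T\}\cup\{0\} = \{T,0\}$; by that proposition, $\{T,0\}$ still has a simultaneous dilation in $\calX$.

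Then, for any fixed $\lambda\in[0,1]$, I would write
\begin{equation*}
	\lambda T \;=\; \lambda\cdot T \,+\, (1-\lambda)\cdot 0 \;\in\; \conv\{T,0\}.
\end{equation*}
Since $\{T,0\}$ simultaneously dilates in $\calX$ and $\lambda T$ belongs to its convex hull, Theorem~\ref{thm:convex-combination} applies and yields a dilation of $\lambda T$ in $\calX$, which is what we wanted.

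There is essentially no obstacle beyond recognizing the correct assembly; the conceptual subtlety, already alluded to above, is simply that one must pass through a set containing the zero operator in order to express $\lambda T$ as a genuine (not merely subconvex) combination, which is precisely what Proposition~\ref{prop:zero-operator} is designed to enable. The endpoints $\lambda=0$ and $\lambda=1$ are of course trivial, but the argument above handles them uniformly and requires no case distinction.
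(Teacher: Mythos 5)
Your proof is correct and is exactly the argument the paper intends: the paper states that this corollary ``is a direct consequence of Theorem~\ref{thm:convex-combination} and Proposition~\ref{prop:zero-operator}'', and your assembly --- pass from the dilation of $T$ to a simultaneous dilation of $\{T\}$, enlarge to $\{T,0\}$ via Proposition~\ref{prop:zero-operator}, then write $\lambda T = \lambda T + (1-\lambda)\cdot 0 \in \conv\{T,0\}$ and apply Theorem~\ref{thm:convex-combination} --- is precisely that deduction, spelled out.
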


It is a consequence of Proposition~\ref{prop:zero-operator} and the more general Theorem~\ref{thm:convex-combinations-simultaneous} below that $\{\lambda T: \; \lambda \in [0,1]\}$ even has a simultaneous dilation in $\calX$.

\subsection*{Main ideas}

Before we give the proof of Theorem~\ref{thm:convex-combination}, we explain some of the main ideas. According to Proposition~\ref{prop:N-dilations} it suffices to show that $T$ has an $N$-dilation in $\calX$ for every $N \in \bbN$. To give the reader an idea of how this can be accomplished, let us first consider the following special case of Theorem~\ref{thm:convex-combination}: let $T_1,T_2 \in \calL(X)$ be two invertible isometries and let $\lambda_1,\lambda_2 \in [0,1]$ with $\lambda_1 + \lambda_2 = 1$. We want to show that $T \coloneqq \lambda_1T_1 + \lambda_2T_2$ has an $N$-dilation in $\calX$ for $N \in \bbN$. 

For $N = 1$ this can  be accomplished as follows. Set $Y = \ell_2^p(X) = X^2$ and define
\begin{align}
	\label{eq:1-dilation-of-convex-combination}
	U \coloneqq
	\begin{pmatrix}
		T_1 & 0 \\
		0 & T_2
	\end{pmatrix} \in \mathcal{L}(Y).
\end{align}
This is obviously an invertible isometry on $Y$ since $T_1$ and $T_2$ were assumed to be invertible isometries on $X$. Moreover, we define $J\colon X \to Y$ and $Q\colon Y \to X$ by
\begin{align*}
	Jx =
	\begin{pmatrix}
		\lambda_1^{1/p}x \\ \lambda_2^{1/p}x
	\end{pmatrix},
	\qquad
	Q
	\begin{pmatrix}
		x_1 \\ x_2
	\end{pmatrix}
	= 
	\lambda_1^{1/q}x_1 + \lambda_2^{1/q}x_2
\end{align*}
for all $x,x_1,x_2 \in X$, where $q$ is the Hölder conjugate of $p$, i.e.\ $1/p + 1/q = 1$. Clearly, $J$ is an isometry and $Q$ is a contraction by Hölder's inequality. Moreover, for $x \in X$
\begin{align*}
	Q \, U^0 Jx = QJx = \lambda_1^{1/q}\lambda_1^{1/p}x + \lambda_2^{1/q}\lambda_2^{1/p}x = x = T^0 x
\end{align*}
and
\begin{align*}
	Q \, U^1 J x = 
	Q
	\begin{pmatrix}
		\lambda_1^{1/p} T_1 x \\
		\lambda_2^{1/p} T_2 x
	\end{pmatrix}
	= \lambda_1 T_1x + \lambda_2 T_2x = Tx.
\end{align*}
Hence, we have constructed a $1$-dilation of $T$ in $\calX$. In order construct a $2$-dilation, one can proceed as follows. Let $Y = \ell^p_4(X) = X^4$ and define $U \in \calL(Y)$ by
\begin{align}
	\label{eq:2-dilation-of-convex-combination}
	U =
	\begin{pmatrix}
		T_1 &     &     &     \\
		    & T_2 &     &     \\
		    &     &     & T_1 \\
		    &     & T_2 & 
	\end{pmatrix},
\end{align}
where the empty entries are understood to be the zero operator. Moreover, we define $J\colon X \to Y$ and $Q\colon Y \to X$ by
\begin{align*}
	Jx = ((\lambda_1 \lambda_1)^{1/p}x,
		(\lambda_2 \lambda_2)^{1/p}x,
		(\lambda_1 \lambda_2)^{1/p}x,
		(\lambda_2 \lambda_1)^{1/p}x)^T
\end{align*}
for all $x \in X$ and
\begin{align*}
	Q(x_1, \ldots, x_n)^T = (\lambda_1 \lambda_1)^{1/q}x_1 + (\lambda_2 \lambda_2)^{1/q}x_2 + (\lambda_1 \lambda_2)^{1/q}x_2 + (\lambda_2 \lambda_1)^{1/q}x_4
\end{align*}
for all $x_1,\dots,x_4 \in X$. Again, $J$ and $Q$ are contractions and $U$ is an invertible isometry. Moreover, it is easy to check that $Q\, U^n J = T^n$ for all $n \in \{0,1,2\}$. We have thus constructed a $2$-dilation of $T$ in $\calX$.

At first glance, this is where the story ends, since there is no obvious way to generalize the above constructions in order to obtain a $3$-dilation of $T$. Indeed, formulas~\eqref{eq:1-dilation-of-convex-combination} and~\eqref{eq:2-dilation-of-convex-combination} suggest that, in order to construct an $N$-dilation of $T$, we should choose $Y = X^{2^N}$ and define $U$ as some kind of permutation matrix whose entries are the operators $T_1$ and $T_2$ instead of ones. Yet, it does not seem to be clear what permutation matrix we should choose and which entries shall be chosen to be $T_1$ and which to be $T_2$. This suggests that we should make some changes to the above construction in order obtain a structure which also works for $N$-dilations.

To this end, we proceed as follows. Fix $N \in \bbN$ and consider the $N$-cycle $\sigma \coloneqq (1 \dots N)$, i.e.\ the permutation on the set $\{1,\dots,N\}$ that maps $1$ to $2$, $2$ to $3$, \dots, and $N$ to $1$. We intend to take the $N \times N$-permutation matrix induced by $\sigma$ and to replace the ones in the matrix with the operators $T_1$ and $T_2$. Since there are only two operators $T_1$ and $T_2$ but $N$ entries that we have to replace, there is no canonical choice of which entry should become which operator. We thus follow the folklore role of thumb that, if there is no canonical choice to make, then it is best to make all possible choices simultaneously: let $\calA$ denote the set of all mappings from $\{1,\dots,N\}$ to $\{1,2\}$. For each $\alpha \in \calA$ we consider the sequence of $N$ operators $T_{\alpha(1)}, \dots, T_{\alpha(N)}$ and replace the ones in the permutation matrix induced by $\sigma$ with those operators; this yields an invertible isometry $U_\alpha \in \calL(X^{N})$. Finally, we define $U \coloneqq \bigoplus_{\alpha \in \calA} U_\alpha \in \calL(Y)$ where $Y \coloneqq X^{N \lvert \calA \rvert} = X^{N 2^N}$.

It turns out that, with appropriate choices of $J$ and $Q$, this really yields an $N$-dilation of $T$, though a few cumbersome computations are needed to verify this. In the next subsection we make the above construction precise and give all necessary details. 
Note that we have to consider convex combinations of finitely many instead of only two operators.

We should point out once again that, for $N = 2$, the construction that we have just described is more complex than the construction of $U$ in~\eqref{eq:2-dilation-of-convex-combination}: we now obtain a $2$-dilation on the space $X^{2 \cdot 2^2} = X^8$ instead of $X^4$.

\subsection*{The proof of Theorem~\ref{thm:convex-combination}}

After we have just explained the main ideas behind the proof of Theorem~\ref{thm:convex-combination}, we are now going to give the proof in detail.

\begin{proof}[Proof of Theorem~\ref{thm:convex-combination}]
Since $\calT$ admits a simultaneous dilation in $\calX$ we may, and will, assume that $\calT$ consists of invertible isometries. There exist $m \in \bbN$, operators $T_1, \dots, T_m \in \calT$ and numbers $\lambda_1,\dots,\lambda_m \in [0,1]$ such that $\sum_{k=1}^m \lambda_k = 1$ and $\sum_{k=1}^m \lambda_k T_k = T$.

Fix $N \in \bbN$. According to Proposition~\ref{prop:N-dilations}(a) it suffices to show that $T$ possesses an $N$-dilation in $\calX$. To this end, let $\calA$ denote the set of all mappings from $\{1,\dots,N\}$ to $\{1,\dots,m\}$. It is convenient to abbreviate $\lambda \coloneqq (\lambda_1, \dots, \lambda_m)$ and to define $\lvert \lambda \rvert_\alpha \coloneqq \prod_{k=1}^N \lambda_{\alpha(k)}$ for each $\alpha \in \calA$. Note that
\begin{align}
	\label{eq:sum-of-all-lambda-alpha}
	\sum_{\alpha \in \calA} \lvert \lambda \rvert_\alpha = \big(\sum_{k=1}^m \lambda_k \big)^N = 1.
\end{align}
Now, we define the space $Y$ and the mappings $J\colon X \to Y$ and $Q\colon Y \to X$. We set
\begin{align*}
	Y \coloneqq \ell^p_{N m^N}(X) = \ell^p_{N \lvert \calA \rvert}(X) = (X^N)^{\calA},
\end{align*}
where the latter space is endowed with the $p$-norm. For each $\alpha \in \calA$ we set
\begin{align*}
	J_\alpha\colon X \to X^N, \qquad J_\alpha x = \big(\frac{\lvert \lambda\rvert_\alpha}{N}\big)^{1/p} \big( x, \dots, x\big)
\end{align*}
and we define $J \colon X \to Y$ by $Jx = (J_\alpha x)_{\alpha \in \calA}$ for all $x \in X$. It readily follows from formula~\eqref{eq:sum-of-all-lambda-alpha} that $J$ is isometric. Moreover, for each $y = (x_{k,\alpha})_{k \in \{1,\dots,N\}, \, \alpha \in \calA} \in Y = (X^N)^{\calA}$ we define
\begin{align*}
	Qy \coloneqq \sum_{\alpha \in \calA} \big(\frac{\lvert \lambda \rvert_\alpha}{N}\big)^{1/q} \sum_{k=1}^N x_{k,\alpha},
\end{align*}
where $q \in (1,\infty)$ denotes the conjugate index of $p$, meaning that $1/p + 1/q = 1$. It follows from H\"older's inequality that $Q$ is contractive. Indeed, if $y$ is as above, then
\begin{align*}
	\lVert Qy\rVert & \le \sum_{\alpha \in \calA} \sum_{k=1}^N \big( \frac{|\lambda|_\alpha}{N} \big)^{1/q} \lVert x_{k,\alpha}\rVert \le \\
	& \le \big( \sum_{\alpha \in \calA} \sum_{k=1}^N \frac{|\lambda|_\alpha}{N} \, \big)^{1/q} \cdot \big( \sum_{\alpha \in \calA} \sum_{k=1}^N \lVert x_{k,\alpha}\rVert^p \, \big)^{1/p} = \lVert y \rVert. 
\end{align*}
Finally, we have to construct an invertible isometry $U \in \calL(Y)$ such that $Q \, U^n J = T^n$ for all $n \in \{0,\dots,N\}$. For each $\alpha \in \calA$ we define $U_\alpha\colon X^N \to X^N$ by
\begin{align*}
	U_\alpha (x_k)_{k \in \{1,\dots,N\}} = \big( T_{\alpha(k)}x_{\sigma(k)} \big)_{k \in \{1,\dots,N\}}
\end{align*}
for every $(x_k)_{k \in \{1,\dots,N\}} \in X^N$; as noted above, $\sigma\colon \{1,\dots,N\} \to \{1,\dots,N\}$ denotes the $N$-cycle $(1 \dots N)$. We point out that $U_\alpha$ can be written in matrix form as
\begin{align*}
	U_\alpha = 
	\begin{pmatrix}
		              & T_{\alpha(1)} &               &        &                 \\
		              &               & T_{\alpha(2)} &        &                 \\
		              &               &               & \ddots &                 \\ 
		              &               &               &        & T_{\alpha(N-1)} \\
		T_{\alpha(N)} &               &               &        &   
	\end{pmatrix}.
\end{align*}
We set $U \coloneqq \bigoplus_{\alpha \in \calA} U_\alpha\colon Y \to Y$. Clearly, $U$ is an invertible isometry since every operator $T_1,\dots,T_m$ is assumed to be an invertible isometry. The only remaining point is to prove that $U$ fulfills the equality $Q \, U^n J = T^n$ for all $n \in \{0,\dots,N\}$.

First observe that we can explicitly compute the powers $(U_\alpha)^n$ of $U_\alpha$ for $n \in \{0,\dots,N\}$. Indeed, for each such $n$, every $\alpha \in \calA$ and every $(x_k)_{k \in \{1,\dots,N\}} \in X^N$ we have
\begin{equation}
	\label{eq:formula_power_single_convex}
	\begin{split}
		(U_\alpha)^n (x_k)_{k \in \{1,\dots,N\}} & = \big( \prod_{j=1}^n T_{\alpha(\sigma^{j-1}(k))} \; x_{\sigma^n(k)} \big)_{k \in \{1,\dots,N\}} \\
		                                         & = \big( \prod_{j=1}^n T_{\alpha(\sigma^{k-1}(j))} \; x_{\sigma^n(k)} \big)_{k \in \{1,\dots,N\}}.
	\end{split}
\end{equation}
The first equality can be seen by induction over $n$ (and holds for all $n \in \bbN_0$). The second equality follows from that fact that $\sigma^{j-1}(k) = \sigma^{k-1}(j)$ for all $j,k \in \{1,\dots,N\}$. Using~\eqref{eq:formula_power_single_convex}, we obtain for $n \in \{0,\dots,N\}$ and $x \in X$
\begin{align*}
	Q \, U^n Jx & = \sum_{\alpha \in \calA} \big(\frac{\lvert \lambda \rvert_\alpha}{N}\big)^{1/q} \sum_{k=1}^N \big(\frac{\lvert \lambda\rvert_\alpha}{N}\big)^{1/p} \big((U_\alpha)^n(x, \dots, x)\big)_k \\
	& = \sum_{\alpha \in \calA}  \frac{\lvert \lambda\rvert_\alpha}{N} \sum_{k=1}^N \prod_{j=1}^n T_{\alpha(\sigma^{k-1}(j))} \; x.
\end{align*}
On the other hand, a short computation shows that
\begin{align*}
	T^n = \sum_{\alpha \in \calA} \lvert \lambda \rvert_\alpha \prod_{k=1}^n T_{\alpha(k)},
\end{align*}
so it only remains to prove the equality
\begin{align}
	\label{eq:critical-equation-for-dilation-equality}
	\sum_{\alpha \in \calA}  \frac{\lvert \lambda\rvert_\alpha}{N} \sum_{k=1}^N \prod_{j=1}^n T_{\alpha(\sigma^{k-1}(j))} = \sum_{\alpha \in \calA} \lvert \lambda\rvert_\alpha \prod_{j=1}^n T_{\alpha(j)}.
\end{align}
To show~\eqref{eq:critical-equation-for-dilation-equality} we need a bit of group theory. Let $S_N$ denote the symmetric group over $N$ elements, i.e.\ the set of all bijections on $\{1,\dots,N\}$, and let $C_N$ denote the cyclic subgroup of $S_N$ generated by the $N$-cycle $\sigma$. The group $C_N$ operates on the set $\calA$ via the group action
\begin{align*}
	C_N \times \calA & \to \calA, \\
	(\tau, \alpha) & \mapsto \alpha \circ \tau.
\end{align*}
We call two elements $\alpha,\beta \in \calA$ \emph{equivalent} and denote this by $\alpha \sim \beta$ if they have the same orbit under this group action, i.e.\ if there exists a permutation $\tau \in C_N$ such that $\alpha = \beta \circ \tau$. Clearly, $\sim$ is an equivalence relation on $\calA$. Note that we have $|\lambda|_\alpha = |\lambda|_\beta$ whenever $\alpha \sim \beta$ (but the converse is of course false). 

Consider a fixed equivalence class $A \subseteq \calA$ of $\sim$. Since, for $\alpha \in A$, the number $\lvert \lambda\rvert_\alpha$ does not depend on the choice of $\alpha$, it suffices to prove
\begin{align}
	\label{eq:critical-equation-for-dilation-equality-2}
	\sum_{\alpha \in A} \sum_{k=1}^N \prod_{j=1}^n T_{\alpha(\sigma^{k-1}(j))} = N \sum_{\alpha \in A} \prod_{j=1}^n T_{\alpha(j)}
\end{align}
in order to show~\eqref{eq:critical-equation-for-dilation-equality}. Fix an $\alpha_0 \in A$. The equivalence class $A$ is given by $A = \{\alpha_0 \circ \tau: \; \tau \in C_N\}$ and thus, we can replace the summation on both sides of~\eqref{eq:critical-equation-for-dilation-equality-2} with a summation over $C_N$. Yet, we have to be a bit careful here since the  surjective mapping $C_N \ni \tau \to \alpha_0 \circ \tau \in A$ might not be injective. To account for this, we use Proposition~\ref{prop:group-operation} from the appendix which tells us that, for each $\alpha \in A$, there exist exactly $N/\lvert A\rvert$ elements $\tau \in C_N$ with $\alpha_0 \circ \tau = \alpha$. Thus, the left hand side of~\eqref{eq:critical-equation-for-dilation-equality-2} becomes
\begin{align*}
	\frac{\lvert A\rvert}{N} \sum_{\tau \in C_N} \sum_{k=1}^N \prod_{j=1}^n T_{(\alpha_0 \circ \tau \circ \sigma^{k-1})(j)} = \frac{\lvert A\rvert}{N} \sum_{\tau,\rho \in C_N} \prod_{j=1}^n T_{(\alpha_0 \circ \tau \circ \rho)(j)}
\end{align*}
and the right hand side of~\eqref{eq:critical-equation-for-dilation-equality-2} becomes
\begin{align*}
	\frac{\lvert A\rvert}{N} \; N \sum_{\tau \in C_N} \prod_{j=1}^n T_{(\alpha_0 \circ \tau)(j)}. 
\end{align*}
The mapping $C_N \times C_N \ni (\tau, \rho) \mapsto \tau \circ \rho \in C_N$ hits each element in $C_N$ exactly $N$ times (see Proposition~\ref{prop:count-group-elements} in the appendix), so both the left and the right side of~\eqref{eq:critical-equation-for-dilation-equality-2} coincide. This proves~\eqref{eq:critical-equation-for-dilation-equality-2}, hence~\eqref{eq:critical-equation-for-dilation-equality} and thus the theorem.
\end{proof}

\section{Simultaneous dilation of convex combinations} \label{sec:convex-combinations-simultaneous}

The following theorem generalizes Theorem~\ref{thm:convex-combination} to simultaneous dilations. 

\begin{theorem} \label{thm:convex-combinations-simultaneous}
	Fix $p \in (1,\infty)$ and let $\calX$ be a class of Banach spaces that fulfills Assumptions~\ref{ass:framework}. Let $X \in \calX$ and assume that $\calT \subseteq \calL(X)$ has a simultaneous dilation in $\calX$. Then the convex hull $\operatorname{conv} \calT$ of $\calT$ has a simultaneous dilation in $\calX$.
\end{theorem}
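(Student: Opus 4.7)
The plan is to mimic the proof of Theorem~\ref{thm:convex-combination} while shifting where the convex combination weights are stored. In that proof the weights of a single $T = \sum_k \lambda_k T_k$ are absorbed into the injection $J$ and the averaging map $Q$, while the isometry $U$ encodes only the $T_k$'s themselves. For a \emph{simultaneous} dilation of several combinations $S_1,\dots,S_r \in \conv(\calT)$ this is no longer possible because $J$ and $Q$ must serve all $S_i$ at once; so I will push the weight information into the isometries $U_{S_i}$ themselves. By Proposition~\ref{prop:finitary} it suffices to dilate each finite subset $\{S_1,\dots,S_r\} \subseteq \conv(\calT)$. By passing from $\calT$ to its simultaneous dilation and to the corresponding invertible isometries $U_T \in \calL(Y)$ on the dilating space $Y \in \calX$, I may assume at the outset that $\calT$ consists of invertible isometries. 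Taking the union of the (finite) supports then yields a common family $T_1,\dots,T_m \in \calT$ with $S_i = \sum_{k=1}^m \lambda_{i,k} T_k$. Finally, by Proposition~\ref{prop:N-dilations} it is enough to produce, for each fixed $N \in \bbN$, a simultaneous $N$-dilation of $\{S_1,\dots,S_r\}$.

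Assume first that all $\lambda_{i,k} = p_{i,k}/M$ are rational with common denominator $M$. For each $i$ choose any $\phi_i \colon \{1,\dots,M\} \to \{1,\dots,m\}$ with $|\phi_i^{-1}(k)| = p_{i,k}$, so that $\sum_{a=1}^M T_{\phi_i(a)} = M S_i$. Setting $\calA = \{1,\dots,M\}^{\{1,\dots,N\}}$, take $Y = \ell^p_{N|\calA|}(X) \cong (X^N)^{\calA}$, which lies in $\calX$ by Assumptions~\ref{ass:framework}. Define $J$ and $Q$ via the \emph{uniform} weights $(N|\calA|)^{-1/p}$ and $(N|\calA|)^{-1/q}$, so that they depend only on $M$ and $N$ and are common to all $S_i$. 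For each $i$ set $U_{S_i} = \bigoplus_{\alpha \in \calA} U_\alpha^{(i)}$, where $U_\alpha^{(i)}$ is the cyclic matrix from the proof of Theorem~\ref{thm:convex-combination} but with entries $T_{\phi_i(\alpha(k))}$ in place of $T_{\alpha(k)}$; this is an invertible isometry because the $T_k$ are. The matrix product formula from Theorem~\ref{thm:convex-combination}, applied blockwise, then gives for $n \le N$
\begin{align*}
	Q\,U_{S_{i_1}}\cdots U_{S_{i_n}}\,J\,x = \frac{1}{N|\calA|} \sum_{\alpha \in \calA} \sum_{k=1}^{N} \prod_{j=1}^{n} T_{\phi_{i_j}(\alpha(\sigma^{j-1}(k)))}\, x.
\end{align*}
Since $n \le N$ and $\sigma$ is an $N$-cycle, the indices $\sigma^{j-1}(k)$ for $j=1,\dots,n$ are pairwise distinct for every fixed $k$, so the sum over $\alpha$ factors into $n$ independent inner sums, each equal to $M S_{i_j}$. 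A direct count then collapses the right-hand side to $S_{i_1}\cdots S_{i_n}\, x$. Notice that this factorisation replaces, in a strictly simpler way, the group-theoretic averaging of Theorem~\ref{thm:convex-combination}, precisely because each $U_{S_i}$ now carries its own weight data through $\phi_i$.

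For arbitrary (possibly irrational) weights, I would approximate each $\lambda_{i,k}$ by rationals $\lambda_{i,k}^{(M)}$ with common denominator $M$ and $\lambda_{i,k}^{(M)} \to \lambda_{i,k}$, apply the previous construction to the approximants $\tilde S_i^{(M)} := \sum_k \lambda_{i,k}^{(M)} T_k$, and take the ultraproduct of the resulting simultaneous $N$-dilations along a free ultrafilter $\calU$ on $\bbN$, in the style of Proposition~\ref{prop:N-dilations}. I expect this last step to be the main subtle point: one must verify that the ultraproduct operators intertwine the \emph{original} $S_i$, not the rational approximants. This is achieved by precomposing with the canonical embedding $X \hookrightarrow X^\calU$ and postcomposing with the weak-limit map $X^\calU \to X$ (both available because $X$ is reflexive), and then using $\norm{\tilde S_i^{(M)} - S_i}_{\calL(X)} \to 0$ together with the ultra-stability from Assumptions~\ref{ass:framework}. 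A final application of Proposition~\ref{prop:N-dilations} then promotes the simultaneous $N$-dilations produced for each $N$ to the desired simultaneous dilation of $\conv(\calT)$ in $\calX$.
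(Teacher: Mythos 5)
Your proposal is correct, and its skeleton is the paper's: reduce to finite subsets via Proposition~\ref{prop:finitary}, pass to invertible isometries via the given simultaneous dilation, force all convex combinations into \emph{uniform} weights by a common denominator with repetitions (your maps $\phi_i$ play exactly the role of the paper's representation $F = \sum_{k=1}^m \frac{1}{m} T_{k,F}$), so that $J$ and $Q$ carry only uniform weights and thus serve all operators at once, and then build block-diagonal cyclic isometries and invoke Proposition~\ref{prop:N-dilations}. You deviate in two places, both legitimately. First, where the paper verifies the identity $Q\, U_{F_1}\cdots U_{F_n} J = F_1\cdots F_n$ by averaging over orbits of the cyclic group action (Propositions~\ref{prop:count-group-elements} and~\ref{prop:group-operation} in Appendix~\ref{appendix:group-theory}), you use the factorization argument: for fixed $k$ the positions $\sigma^{j-1}(k)$, $j = 1,\dots,n \le N$, are pairwise distinct, so the sum over $\alpha \in \{1,\dots,M\}^{\{1,\dots,N\}}$ splits into independent inner sums $\sum_a T_{\phi_{i_j}(a)} = M S_{i_j}$ (in the correct order) times a free factor $M^{N-n}$, which after dividing by $N M^N$ collapses to $S_{i_1}\cdots S_{i_n}$. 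This is correct and strictly simpler; it makes the group-theoretic appendix unnecessary, and it also isolates cleanly why only an $N$-dilation is obtained (distinctness fails for $n > N$). Second, for irrational coefficients the paper's route is shorter: $\conv \calT$ lies in the strong operator closure of the rational-coefficient combinations, so a single application of Proposition~\ref{prop:stop-limits} at the outset disposes of the general case. Your ultraproduct of $N$-dilations of the approximants $\tilde S_i^{(M)}$ also works — the norm bound $\norm{\tilde S_i^{(M)} - S_i} \le \sum_k \lvert \lambda_{i,k}^{(M)} - \lambda_{i,k}\rvert \to 0$ guarantees that the embedding/weak-limit sandwich $X \to X^\calU \to Y \to X^\calU \to X$ intertwines the original $S_i$, exactly as in the proofs of Lemma~\ref{lem:extend-dilations-along-a-net} and Proposition~\ref{prop:N-dilations} — but it essentially re-proves a special case of Proposition~\ref{prop:stop-limits} by hand, so it buys self-containedness at the cost of repeating an ultraproduct argument the paper already packaged once.
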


Our main result, Theorem~\ref{thm:main-result}, is an immediate consequence of Theorem~\ref{thm:convex-combinations-simultaneous}, Proposition~\ref{prop:stop-limits} and of the fact that, for convex sets of operators, the strong and the weak operator closure coincide \cite[Corollary VI.1.5]{DunSch1958}.

The proof of Theorem~\ref{thm:convex-combinations-simultaneous} is similar to the proof of Theorem~\ref{thm:convex-combination}, but technically more involved. The major obstacle is that, in the proof of Theorem~\ref{thm:convex-combination}, the maps $J$ and $Q$ depend on the convex coefficients $\lambda_1,\dots,\lambda_m$. If we want to dilate several operators $T^{(1)},\dots, T^{(r)} \in \conv \calT$ instead of only one operator $T$, we obtain different sets of convex coefficients and thus -- if we want to use the same technique as for Theorem~\ref{thm:convex-combination} -- different maps $J$ and $Q$ for each operator $T^{(1)}, \dots, T^{(r)}$. This contradicts the definition of a simultaneous dilation. Fortunately, there is a trick to circumvent this problem; this trick is explained in the first part of the proof of Theorem~\ref{thm:convex-combinations-simultaneous}. The rest of the proof is quite similar to the proof of Theorem~\ref{thm:convex-combination}.

\begin{proof}[Proof of Theorem~\ref{thm:convex-combinations-simultaneous}]
	Let $\calS$ denote the set of all convex combinations of $\calT$ with rational convex coefficients. 
	Then $\conv \calT$ is contained in the strong operator closure of $\calS$, so it suffices by Proposition~\ref{prop:stop-limits} to show that $\calS$ has a simultaneous dilation in $\calX$. 
	To this end, it suffices in turn to prove that every finite subset $\calF$ of $\calS$ has a simultaneous dilation in $\calX$, see Proposition~\ref{prop:finitary}.
	
	So let $\calF$ be a finite subset of $\calS$. Since every operator in $\calF$ can be written as a convex combination of finitely many operators in $\calT$ with rational convex coefficients we can find a number $m \in \bbN$ and, for each $F \in \calF$, operators $T_{1,F},\dots, T_{m,F} \in \calT$ such that
	\begin{align}
		\label{eq:multiple-convex-combinations}
		F = \sum_{k=1}^m \frac{1}{m} T_{k,F}.
	\end{align}
	Since the operators $T_{k,F}$ (for $k \in \{1,\dots, m\}$ and $F \in \calF$) have a simultaneous dilation in $\calX$ we may, and will, assume from now on that each operator $T_{k,F}$ is an invertible isometry. The point of the above manipulations is that we have represented the operators in $F \in \calF$, which we wish to dilate, with the same convex coefficients for each $F$.
	
	Fix $N \in \bbN$. We show that $\calF$ has a simultaneous dilation in $\calX$, and to this end it suffices due to Proposition~\ref{prop:N-dilations} to prove that $\calF$ has a simultaneous $N$-dilation in $\calX$. We first construct the space $Y$ and the mappings $J \colon X \to Y$ and $Q \colon Y \to X$ used in Definition~\ref{def:N-dilations}(b). As in the proof of Theorem~\ref{thm:convex-combination} we denote the set of all mappings from $\{1,\dots,N\}$ to $\{1,\dots,m\}$ by $\calA$ and we let 
	\begin{align*}
		Y \coloneqq \ell^p_{N m^N}(X) = \ell^p_{N \lvert \calA \rvert}(X) = X^{N m^N} = (X^N)^{\calA},
	\end{align*}
	Also analogously to the proof of Theorem~\ref{thm:convex-combination} we define
	\begin{align*}
		J\colon X \to Y, \qquad Jx = \big(\frac{1}{N m^N} \big)^{1/p} \big( x, \dots, x\big)
	\end{align*}
	and
	\begin{align*}
		Q\colon Y \to X, \qquad Q (x_{k,\alpha})_{k \in \{1,\dots,N\}, \, \alpha \in \calA} = \big( \frac{1}{N m^N} \big)^{1/q} \sum_{\alpha \in \calA} \sum_{k=1}^N x_{k,\alpha},
	\end{align*}
	where $q$ is the Hölder conjugate of $p$. Then $J$ is isometric and $Q$ is contractive.
	
	Finally, we construct $(U_{F})_{F \in \calF}$ in a similar way as we defined the operator $U$ in the proof of Theorem~\ref{thm:convex-combination}. For $F \in \calF$ and $\alpha \in \calA$ we define $U_{\alpha, F} \in \calL(X^N)$ by
	\begin{align*}
		U_{\alpha,F} (x_k)_{k \in \{1,\dots,N\}} = \big( T_{\alpha(k),F}x_{\sigma(k)} \big)_{k \in \{1,\dots,N\}}
	\end{align*}
	for all $(x_k)_{k \in \{1,\dots,N\}}$ and we set $U_F \coloneqq \bigoplus_{\alpha \in \calA} U_{\alpha,F} \in \calL(Y)$ for each $F \in \calF$. Clearly, each operator $U_F$ is an invertible isometry on $Y$ and we only have to verify 
	\begin{align*}
		Q \, U_{F_1} \cdots U_{F_n} J = F_1 \cdots F_n
	\end{align*}
	for each $n \in \{0,\dots,N\}$ and all $F_1, \dots, F_n \in \calF$. So fix $n \in \{0,\dots,N\}$ and $F_1,\dots,F_n \in \calF$.
	We note that, similarly as in the proof of Theorem~\ref{thm:convex-combination}, the formula
	\begin{align*}
		\begin{split}
			\prod_{j=1}^n U_{\alpha,F_j} \; (x_k)_{k \in \{1,\dots,N\}} & = \big( \prod_{j=1}^n T_{\alpha(\sigma^{j-1}(k)), F_j} \; x_{\sigma^n(k)} \big)_{k \in \{1,\dots,N\}} \\
		                                         & = \big( \prod_{j=1}^n T_{\alpha(\sigma^{k-1}(j)), F_j} \; x_{\sigma^n(k)} \big)_{k \in \{1,\dots,N\}}
		\end{split}
	\end{align*}
	holds for all $\alpha \in \calA$ and all $(x_k)_{k \in \{1,\dots,N\}} \in X^N$. Using this, we obtain for each $x \in X$
	\begin{align*}
		Q \, U_{F_1} \cdots U_{F_n} J x = \frac{1}{N m^N} \sum_{\alpha \in \calA} \sum_{k=1}^N \prod_{j=1}^n T_{\alpha(\sigma^{k-1}(j)), F_j} \; x.
	\end{align*}
	On the other hand, using~\eqref{eq:multiple-convex-combinations} and the fact that $n \le N$, one easily checks that
	\begin{align*}
		F_1 \cdots F_n x = \frac{1}{m^N} \sum_{\alpha \in \calA} \prod_{j=1}^n T_{\alpha(j), F_j} \; x.
	\end{align*}
	for each $x \in X$. Hence, we merely have to prove that the right hands sides of the previous two equations coincide. We denote by $\sim$ the same equivalence relation on $\calA$ as in the proof of Theorem~\ref{thm:convex-combination} and we fix an equivalence class $A \subseteq \calA$. To conclude the proof, it suffices to show that
	\begin{align*}
		\sum_{\alpha \in A} \sum_{k=1}^N \prod_{j=1}^n T_{\alpha(\sigma^{k-1}(j)), F_j} = N \sum_{\alpha \in A} \prod_{j=1}^n T_{\alpha(j), F_j}.
	\end{align*}
	This follows by exactly the same arguments as in the proof of Theorem~\ref{thm:convex-combination}.
\end{proof}

\section{Application to Banach spaces with super properties} \label{sec:super-properties}

In this section we apply our approach to classes of Banach spaces which fulfill certain regularity properties. Let us begin with the class of super-reflexive Banach spaces. This class is not stable with respect to ultra-products and thus, it does not fulfill Assumptions~\ref{ass:framework}. Nevertheless, we can apply our theory by employing Proposition~\ref{prop:X_Z_satisfies_assumptions}.

\begin{theorem} \label{thm:super-reflexive-spaces}
	Let $Z$ be a super-reflexive Banach space and let $\calT$ denote the weakly closed convex hull of all invertible isometries in $\calL(Z)$. Then $\calT$ has a simultaneous dilation in the class of all super-reflexive Banach spaces.
\end{theorem}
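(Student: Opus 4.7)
The plan is to reduce the statement to Corollary~\ref{cor:convex-combinations-of-isometries} by embedding $Z$ into a suitable subclass of the super-reflexive spaces which is small enough to satisfy Assumptions~\ref{ass:framework}. The class of all super-reflexive Banach spaces is not ultra-stable, so we cannot apply the corollary directly; this is where Proposition~\ref{prop:X_Z_satisfies_assumptions} comes into play.

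Concretely, I would set
\begin{equation*}
    \calX_Z \coloneqq \bigl\{ X \text{ Banach space}: X \xhookrightarrow{f} \ell^2(Z) \bigr\}
\end{equation*}
and invoke Proposition~\ref{prop:X_Z_satisfies_assumptions}, which tells us that $\calX_Z$ contains $Z$ and satisfies Assumptions~\ref{ass:framework} for $p = 2$. Applying Corollary~\ref{cor:convex-combinations-of-isometries} to this class (with $X = Z$), we obtain that the weak operator closure $\calT$ of the convex hull of all invertible isometries on $Z$ admits a simultaneous dilation in $\calX_Z$.

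It then remains to verify that every space $Y \in \calX_Z$ is super-reflexive, so that the simultaneous dilation obtained above lives in the class of super-reflexive Banach spaces. Since $Z$ is super-reflexive, $\ell^2(Z)$ is super-reflexive as well (for instance, via Enflo's renorming theorem: a uniformly convex renorming of $Z$ induces, in the canonical way, a uniformly convex renorming of $\ell^2(Z)$). Using the characterization that a Banach space is super-reflexive precisely when every space finitely representable in it is reflexive, together with the transitivity of finite representability, any $Y \xhookrightarrow{f} \ell^2(Z)$ inherits super-reflexivity from $\ell^2(Z)$: every space finitely representable in $Y$ is also finitely representable in $\ell^2(Z)$ and therefore reflexive.

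There is no real obstacle here; the result is essentially a packaging of the main theorem with the auxiliary Proposition~\ref{prop:X_Z_satisfies_assumptions}. The only non-trivial input is the passage from the super-reflexivity of $Z$ to that of $\ell^2(Z)$, which is a standard consequence of the equivalence between super-reflexivity and uniform convexifiability.
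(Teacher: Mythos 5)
Your proposal is correct and follows essentially the same route as the paper: invoke Proposition~\ref{prop:X_Z_satisfies_assumptions} to get the class $\calX_Z$ containing $Z$ and fulfilling Assumptions~\ref{ass:framework}, apply Corollary~\ref{cor:convex-combinations-of-isometries}, and then note that every space in $\calX_Z$ is super-reflexive. The only (inessential) difference is in that last step: the paper deduces super-reflexivity of all members of $\calX_Z$ abstractly from Assumptions~\ref{ass:framework}(b) and~(c) (ultra-stability plus reflexivity), whereas you re-derive it via transitivity of finite representability and the super-reflexivity of $\ell^2(Z)$ --- both justifications are valid.
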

\begin{proof}
	Let $\calX_Z$ be the class of Banach spaces defined in Proposition~\ref{prop:X_Z_satisfies_assumptions}. Then, according to this proposition, $\calX_Z$ contains the space $Z$ and fulfills the Assumptions~\ref{ass:framework}. Hence, $\calT$ has a simultaneous dilation in $\calX_Z$ according to Corollary~\ref{cor:convex-combinations-of-isometries}. Since $\calX_Z$ consists of super-reflexive spaces (as an immediate consequence of the Assumptions~\ref{ass:framework}), the assertion follows.
\end{proof}

In case that $Z$ is not only super-reflexive, but satisfies an additional regularity property, it is natural to (try to) construct dilations on spaces that enjoy the same regularity property. This can be done by using the concept of super-properties which we recall next.

\begin{definition}
	Consider a property $(P)$ of Banach spaces which is invariant under isometric isomorphisms. We say that a Banach space $Z$ has \emph{super-$(P)$} if every Banach space $X$ finitely representable in $Z$ has $(P)$. If $(P)$ and super-$(P)$ are the same property, then we call $(P)$ a \emph{super-property}. 
\end{definition}

The question whether a Banach space $Z$ has super-$(P)$ is closely related to the question whether all ultra powers of $Z$ have $(P)$. For more information on super-properties we refer the interested reader to~\cite[Chapter~11]{Pis16} and~\cite[Chapter~8]{DJT95}.

\begin{theorem} \label{thm:super-property}
	Let $(P)$ be a super-property and let $Z$ be a super-reflexive Banach space such that $\ell^2(Z)$ has $(P)$. Further, let $\calT \subseteq \calL(X)$ by the weakly closed convex hull of all invertible isometries in $\calL(Z)$. Then $\calT$ has a simultaneous dilation in the class of all super-reflexive Banach spaces having property $(P)$.
\end{theorem}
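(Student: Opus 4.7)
The plan is to reduce this theorem to Corollary~\ref{cor:convex-combinations-of-isometries} by choosing the right class of Banach spaces, and then to use the super-property assumption to transport regularity from $\ell^2(Z)$ to the dilation space. The construction of the class has already been carried out in Proposition~\ref{prop:X_Z_satisfies_assumptions}, so the main work is verifying that everything in the class $\calX_Z$ inherits property $(P)$.

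First I would invoke Proposition~\ref{prop:X_Z_satisfies_assumptions} to obtain the class
\begin{equation*}
	\calX_Z = \bigl\{ X \text{ Banach space} : X \xhookrightarrow{f} \ell^2(Z) \bigr\},
\end{equation*}
which contains $Z$ and fulfills Assumptions~\ref{ass:framework} for $p = 2$. Applying Corollary~\ref{cor:convex-combinations-of-isometries} to $\calX_Z$ then yields a simultaneous dilation of the weakly closed convex hull of all invertible isometries on $Z$ to some space $Y \in \calX_Z$.

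It remains to check that $Y$ is super-reflexive and has property $(P)$. Super-reflexivity of $Y$ is immediate from Assumptions~\ref{ass:framework}(b) and (c), or alternatively from the fact that $Y \xhookrightarrow{f} \ell^2(Z)$ and that $\ell^2(Z)$ is super-reflexive (since $Z$ is). For property $(P)$, the key observation is that, because $(P)$ is a super-property and $\ell^2(Z)$ has $(P)$, the space $\ell^2(Z)$ in fact has super-$(P)$; hence every Banach space finitely representable in $\ell^2(Z)$ has $(P)$. Since $Y \in \calX_Z$ is by definition finitely representable in $\ell^2(Z)$, this gives that $Y$ has $(P)$.

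I do not expect a real obstacle here, as the theorem is essentially a packaging result: the conceptual heart of the argument sits in Proposition~\ref{prop:X_Z_satisfies_assumptions} and Corollary~\ref{cor:convex-combinations-of-isometries}, while the super-property assumption is designed precisely so that regularity passes from $\ell^2(Z)$ to all members of $\calX_Z$. The only small subtlety is remembering that one must assume $(P)$ for $\ell^2(Z)$ rather than merely for $Z$, because the class $\calX_Z$ uses $\ell^2(Z)$ and not $Z$ in its definition; this is forced by Lemma~\ref{lem:ell_2_finitely_repres} and the need for stability under $\ell^2_n$-sums.
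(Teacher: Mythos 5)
Your proposal is correct and follows essentially the same route as the paper's own proof: invoke Proposition~\ref{prop:X_Z_satisfies_assumptions} to get the class $\calX_Z$, apply Corollary~\ref{cor:convex-combinations-of-isometries}, and then use the super-property of $(P)$ together with finite representability in $\ell^2(Z)$ to conclude that every member of $\calX_Z$ has $(P)$ and is super-reflexive. Your unpacking of the step ``$(P)$ super-property $+$ $\ell^2(Z)$ has $(P)$ $\Rightarrow$ every $X \xhookrightarrow{f} \ell^2(Z)$ has $(P)$'' is exactly the justification the paper leaves implicit.
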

\begin{proof}
	Consider the class $\calX_Z$ defined in Proposition~\ref{prop:X_Z_satisfies_assumptions}. The proposition implies that $\calX_Z$ contains $Z$ and satisfies the Assumptions~\ref{ass:framework}. Hence, according to Corollary~\ref{cor:convex-combinations-of-isometries} the set $\calT$ has simultaneous dilation in $\calX_Z$. Yet, since $(P)$ is a super-property, every element of $\calX_Z$ has $(P)$. As every element of $\calX_Z$ is also super-reflexive, this proves the assertion.
\end{proof}

This result applies to a rich collection of important super-properties. Among them, we explicitly mention uniform convexity, the UMD-property (see~\cite{HytNeeVer16}) and having prescribed type and cotype (see~\cite{DJT95}). As an example, we state the following dilation result for UMD spaces concretely.

\begin{corollary} \label{cor:umd-spaces}
	Let $Z$ be a UMD Banach space and let $\calT$ denote the weakly closed convex hull of all invertible isometries in $\calL(Z)$. Then $\calT$ has a simultaneous dilation in the class of all UMD Banach spaces.
\end{corollary}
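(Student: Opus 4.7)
The plan is to invoke Theorem~\ref{thm:super-property} directly, with the property $(P)$ taken to be the UMD property. To apply that theorem, I need to verify three standard facts from the theory of UMD Banach spaces, all of which can be found in the reference \cite{HytNeeVer16}: first, the UMD property is a super-property; second, every UMD space is super-reflexive; and third, $\ell^2(Z)$ is UMD whenever $Z$ is. These three ingredients together place $Z$ in the setting to which Theorem~\ref{thm:super-property} applies.

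For the super-property assertion, the UMD constant of a Banach space can be defined as the supremum of the norms of martingale transforms on finite martingale difference sequences valued in finite-dimensional subspaces. Consequently, this constant is determined, up to arbitrarily small distortion, by the finite-dimensional subspace structure of the space. Hence if a space $X$ is finitely representable in a UMD space $Z$, then $X$ inherits the same UMD constant; in particular $X$ is UMD, which is exactly the statement that UMD is a super-property. The fact that every UMD space is super-reflexive is classical; one way to see it is that the UMD property forces the existence of an equivalent uniformly convex norm, and uniform convexibility is equivalent to super-reflexivity. Finally, the stability of UMD under the Bochner-type construction $Z \mapsto \ell^2(Z)$ is likewise classical, as the UMD constant of $L^p(\mu;Z)$ coincides with that of $Z$ for any $p \in (1,\infty)$ and any measure space $\mu$.

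With these three facts established, Theorem~\ref{thm:super-property} yields a simultaneous dilation of $\calT$ in the class of all super-reflexive Banach spaces with the UMD property. Since every UMD space is automatically super-reflexive, this class coincides with the class of all UMD spaces, and the corollary follows.

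There is essentially no obstacle beyond locating the three ingredients in the literature; the argument is a direct specialization of Theorem~\ref{thm:super-property}. If any step requires additional care, it is the verification that $\ell^2(Z)$, and not just $Z$ itself, is UMD, since Theorem~\ref{thm:super-property} asks for the regularity property on $\ell^2(Z)$ rather than on $Z$; but this is precisely the content of the Bochner stability result mentioned above and therefore presents no real difficulty.
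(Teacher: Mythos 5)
Your proposal is correct and is essentially the paper's own argument: the paper derives Corollary~\ref{cor:umd-spaces} as a direct instance of Theorem~\ref{thm:super-property} with $(P)$ being the UMD property, citing \cite{HytNeeVer16} for exactly the three facts you verify (UMD is a super-property, UMD implies super-reflexivity, and $\ell^2(Z)$ inherits UMD). Your write-up merely makes these standard ingredients explicit, which matches the paper's intent.
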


\section{Application to \texorpdfstring{$L^p$-spaces}{Lebesgue spaces} and to Hilbert spaces} \label{sec:L-p}

\subsection{Dilations on \texorpdfstring{$L^p$-spaces}{Lebesgue spaces}} \label{subsec:L-p}

In this subsection we discuss how our toolkit gives the dilation theorem of Akcoglu--Sucheston on $L^p$-spaces. In fact, we obtain even a bit more, namely a simultaneous dilation of all positive contractions.

\begin{theorem} \label{thm:akcoglu-simultaneously}
	Let $p \in (1,\infty)$, let $(\Omega,\mu)$ be an arbitrary measure space and let $\calT$ denote the set of all positive linear contractions on $L^p(\Omega,\mu)$. Then $\calT$ has a simultaneous dilation in the class of all $L^p$-spaces. Moreover, the mappings $J$ and $Q$ from Definition~\ref{def:dilations}(b) can be chosen positive and the isometries $U_T$ from Definition~\ref{def:dilations}(b) can be chosen to be lattice isomorphisms.
\end{theorem}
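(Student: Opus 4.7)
The plan is to combine Corollary~\ref{cor:convex-combinations-of-isometries} with Proposition~\ref{prop:zero-operator} in the positive-operator forms of Remarks~\ref{rem:positive-morphisms}(a) and~\ref{rem:subconvex-and-absolutely-convex-hull}(b), applied to the class $\calX$ of all $L^p$-spaces (which satisfies Assumptions~\ref{ass:framework}); Abramovich's theorem (Remark~\ref{rem:positive-morphisms}(b)) then handles the lattice-isomorphism part of the conclusion automatically. Rather than approximating positive contractions directly on $L^p(\Omega,\mu)$ by subconvex combinations of positive invertible isometries of the same space (which can fail, e.g.\ on $\ell^p_2$), I would first embed $L^p(\Omega,\mu)$ into a non-atomic $L^p$-space and transport the dilation back.

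Concretely, set $\Omega' \coloneqq \Omega\times[0,1]$ equipped with $\mu\otimes m$, where $m$ is Lebesgue measure, and let $J\colon f\mapsto f\otimes \mathbf{1}_{[0,1]}$ be the positive isometric lattice embedding of $L^p(\Omega,\mu)$ into $L^p(\Omega',\mu\otimes m)$, together with the positive contractive projection $Q\colon g\mapsto \int_0^1 g(\cdot,t)\,dt$; these satisfy $QJ = \Id$. For each $T\in\calT$ the extension $\tilde T \coloneqq JTQ$ is a positive contraction on $L^p(\Omega',\mu\otimes m)$, and the identity $QJ = \Id$ forces $\tilde T_1\cdots\tilde T_n = J\,T_1\cdots T_n\,Q$. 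Consequently, if the set of all positive contractions on $L^p(\Omega',\mu\otimes m)$ admits a positive simultaneous dilation $(Y,\tilde J',\tilde Q',\{\tilde U_S\}_S)$ in the class of $L^p$-spaces, then composing with $J$ on the right and $Q$ on the left yields
\begin{equation*}
	T_1\cdots T_n = (Q\tilde Q')\,\tilde U_{\tilde T_1}\cdots\tilde U_{\tilde T_n}\,(\tilde J' J),
\end{equation*}
which is a positive simultaneous dilation of $\calT$ in the class of $L^p$-spaces. The theorem is thus reduced to the non-atomic case.

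It remains to show that on the non-atomic space $L^p(\Omega',\mu\otimes m)$ every positive contraction lies in the weak operator closure of the subconvex hull of positive invertible isometries; then Corollary~\ref{cor:convex-combinations-of-isometries} together with Proposition~\ref{prop:zero-operator} supplies the required positive simultaneous dilation. I would prove this approximation by a conditional expectation plus Birkhoff argument: approximate a positive contraction $T'$ in SOT via $\IE(\cdot\mid\calF_n)\,T'\,\IE(\cdot\mid\calF_n)$ for an increasing exhausting sequence of finite sub-$\sigma$-algebras $\calF_n$ refined so that all cells have equal $(\mu\otimes m)$-mass (available by the non-atomicity of $m$), encode the resulting finite-rank approximation as a positive sub-stochastic matrix on $\ell^p_n$, complete it to a doubly sub-stochastic matrix on a suitably enlarged partition, and apply Birkhoff's theorem to decompose it into a subconvex combination of permutation matrices. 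Each such permutation, read back on the refined equal-mass partition, is a positive invertible isometry of $L^p(\Omega',\mu\otimes m)$. The main obstacle is the doubly sub-stochastic completion step combined with the bookkeeping needed to ensure that the adjoined rows and columns live inside $\Omega'$ and that the Birkhoff permutations really act isometrically with respect to the $\mu\otimes m$-weights; this is the only place where the specific $L^p$-geometry enters the argument.
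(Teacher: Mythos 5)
Your first reduction step is correct and even elegant: the maps $J\colon f \mapsto f \otimes \mathbf{1}_{[0,1]}$ and $Q\colon g \mapsto \int_0^1 g(\cdot,t)\,dt$ are positive contractions with $QJ = \Id$, the identity $\tilde T_1 \cdots \tilde T_n = J\,T_1\cdots T_n\,Q$ holds, and a positive simultaneous dilation on the larger space does transport back. This differs from the paper, which instead reduces the general case to \emph{finite-dimensional} spaces $\ell^p_{n_\alpha}$ via conditional expectations along semi-partitions and an ultraproduct, and then embeds $\ell^p_n$ \emph{into} $L^p([0,1])$ (Lemma~\ref{lem:akcoglu-lp-n}) --- the opposite direction of your reduction. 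Your version would be a genuine simplification \emph{if} the approximation step on the non-atomic space could be established.

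That is where the proposal breaks down, and the failure is structural, not a matter of bookkeeping. Every permutation of equal-mass cells is a measure-preserving isometry $U$, so it satisfies $U\mathbf{1} = \mathbf{1}$ and $U^*\mathbf{1} = \mathbf{1}$ (interpreted locally on sets of finite measure); consequently every subconvex combination $T$ of such operators satisfies $T\mathbf{1} \le \mathbf{1}$ and $T^*\mathbf{1} \le \mathbf{1}$, and both inequalities persist under weak operator limits because the positive cone is weakly closed. The weak closure of the subconvex hull of cell-permutations therefore consists only of operators that are sub-Markovian in both senses --- and this set misses many positive contractions, indeed it even misses mass-distorting positive invertible isometries such as $(Uf)(t) = (2t)^{1/p} f(t^2)$ on $L^p([0,1])$, for which $U\mathbf{1} \not\le \mathbf{1}$. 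The same obstruction kills the finite-dimensional step directly: the compression $\IE(\cdot\mid\calF_n)\,T'\,\IE(\cdot\mid\calF_n)$ is a positive contraction on $\ell^p_n$ but is in general \emph{not} doubly sub-stochastic (a row sum can exceed $1$), and no completion on an enlarged partition can repair this, since enlargement preserves the offending row; Birkhoff's theorem is then simply inapplicable. This is precisely the obstruction the paper records in the remark following Lemma~\ref{lem:akcoglu-lp-n}, and it explains why the paper's detour runs through the diffuse space rather than through combinatorics of partitions. The density statement you need --- positive contractions on $L^p([0,1])$ lie in the weak operator closure of the positive invertible isometries --- is Grzaslewicz's theorem \cite[Theorem~2]{Grz90}; its proof essentially uses non-measure-preserving isometries and cannot be recovered from permutation matrices. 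Finally, even granting that theorem, your reduction lands on $L^p(\Omega\times[0,1])$ for an \emph{arbitrary} (possibly non-separable, non-$\sigma$-finite) $\Omega$, where the cited density result is not available; closing that gap would require an additional argument, which is exactly what the paper's semi-partition/ultraproduct machinery provides.
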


The proof of Theorem~\ref{thm:akcoglu-simultaneously} relies on a non-canonical reduction procedure: first, we prove the theorem for $\Omega = [0,1]$, endowed with the Lebesgue measure; then we prove it for $\Omega = \{1,\dots,n\}$, endowed with the counting measure; and finally, we prove it for arbitrary measure spaces.

\begin{lemma} \label{lem:akcoglu-lp-0-1}
	Theorem~\ref{thm:akcoglu-simultaneously} is true if $\Omega = [0,1]$ and if $\mu$ is the Lebesgue measure.
\end{lemma}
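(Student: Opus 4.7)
The plan is to derive the lemma from the positive subconvex strengthening of Corollary~\ref{cor:convex-combinations-of-isometries} stated in Remark~\ref{rem:subconvex-and-absolutely-convex-hull}(b). The class $\calX$ of all $L^p$-spaces fulfils Assumptions~\ref{ass:framework}, so applying that corollary with $X = L^p([0,1])$ yields a positive simultaneous dilation in $\calX$ for the weak operator closure $\calS$ of the subconvex hull of all positive invertible isometries of $L^p([0,1])$. The lemma thereby reduces to the approximation statement
\begin{equation*}
	\bigl\{\, T \in \calL(L^p([0,1])) : T \ge 0 \text{ and } \norm{T} \le 1 \,\bigr\} \subseteq \calS.
\end{equation*}

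By the Banach--Lamperti theorem, the positive invertible isometries of $L^p([0,1])$ are precisely the Koopman operators $U_\tau f = f \circ \tau^{-1}$ for invertible measure-preserving transformations $\tau$ of $[0,1]$. So the task is to exhibit every positive contraction $T$ as a weak operator limit of finite subconvex combinations $\sum_k \lambda_k U_{\tau_k}$.

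I would carry this out by discretisation. Let $E_n$ denote the conditional expectation with respect to the dyadic partition $\calP_n$ of $[0,1]$ into $2^n$ intervals of equal length. Since $E_n \to \Id$ strongly on $L^p([0,1])$, it suffices to approximate each operator $E_n T E_n$, which is determined by a non-negative matrix $M^{(n)}$ representing a positive contraction of the $2^n$-dimensional weighted $\ell^p$-space $L^p(\calP_n)$. Next I would embed $M^{(n)}$ into a doubly sub-stochastic matrix acting on a sufficiently refined partition $\calP_{n+r}$ of $[0,1]$, apply the Birkhoff--von Neumann theorem to write this enlarged matrix as a convex combination of permutation matrices, and thereby extract a subconvex decomposition of $M^{(n)}$ itself. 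Each permutation of the intervals of $\calP_{n+r}$ extends canonically to an invertible piecewise-affine measure-preserving bijection of $[0,1]$, which lifts each permutation matrix to a positive invertible isometry $U_\tau$ of $L^p([0,1])$. Taking weak operator limits first in the refinement $r$ and then in the generation $n$ places $T$ inside $\calS$.

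The main obstacle is the Birkhoff-type step: a positive $L^p$-contraction need not be doubly sub-stochastic for $p \in (1,\infty)$, so the embedding of $M^{(n)}$ into a doubly sub-stochastic matrix on a refined partition is nontrivial and is genuinely where the $L^p$-geometry enters the argument. Once that embedding is in place, Birkhoff--von Neumann and the lifting to Koopman operators are essentially automatic, and the remaining verifications are purely topological (strong convergence of $E_n$, joint weak-operator compatibility with subconvex combinations) and pose no new difficulty.
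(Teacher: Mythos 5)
Your opening reduction is exactly right, and it is the same first move as the paper's proof: the lemma reduces to showing that every positive contraction on $L^p([0,1])$ lies in the weak operator closure of the (sub)convex hull of the positive invertible isometries, after which Corollary~\ref{cor:convex-combinations-of-isometries} together with Remark~\ref{rem:positive-morphisms}(a) (or your subconvex variant from Remark~\ref{rem:subconvex-and-absolutely-convex-hull}(b)) finishes the argument. The paper does not prove this approximation statement from scratch; it cites \cite[Theorem~2]{Grz90}, which asserts that the positive invertible isometries themselves are already dense in the positive contractions in the weak operator topology. Your attempt to establish the approximation by hand, however, contains a fatal error. By the Banach--Lamperti theorem \cite{Lamperti1958}, the positive invertible isometries of $L^p([0,1])$ for $p \neq 2$ are \emph{not} just the Koopman operators of measure-preserving bijections: they are the weighted composition operators $f \mapsto h \cdot (f \circ \tau^{-1})$, where $\tau$ is an invertible \emph{non-singular} (not necessarily measure-preserving) transformation and $h$ is the $1/p$-th power of the Radon--Nikodym derivative of $\mu \circ \tau^{-1}$. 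Your building blocks -- interval permutations, which are measure-preserving and hence give unweighted Koopman operators -- form a strictly smaller class, and this class is too small for the argument to succeed.

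Concretely, every measure-preserving Koopman operator $U$ satisfies $U\mathbf{1} = \mathbf{1}$ and $U^*\mathbf{1} = \mathbf{1}$, and these properties pass to subconvex combinations (giving $\le \mathbf{1}$) and to weak operator limits, since the positive cone is weakly closed. Hence everything reachable by your scheme is doubly sub-stochastic. But a positive contraction on $L^p([0,1])$ need not be: take $Tf = \bigl(\int_0^1 f \,dx\bigr) g$ with $g \ge 0$, $\normalnorm{g}_p = 1$ and $g$ unbounded; then $T$ is a positive contraction with $T\mathbf{1} = g \not\le \mathbf{1}$, and the same defect persists for every compression $E_n T E_n$ with $n$ large. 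This also shows that the step you flag as ``the main obstacle'' -- embedding $M^{(n)}$ into a doubly sub-stochastic matrix on a refined partition so as to extract a subconvex decomposition -- is not merely nontrivial but impossible in general: conditional expectations preserve double sub-stochasticity, so no refinement can recover a non-doubly-sub-stochastic compression from doubly sub-stochastic data. The missing ingredient is precisely the weighted isometries (piecewise affine maps with slopes $\neq 1$, carrying the Jacobian weight $|\tau'|^{-1/p}$); these do not fix $\mathbf{1}$, and their use is exactly where the cited result \cite[Theorem~2]{Grz90} does the work that your discretisation scheme cannot.
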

\begin{proof}
	First note that every positive invertible isometry on $L^p(\Omega,\mu)$ is in fact a lattice isomorphism (this is actually true on every Banach lattice, see \cite{Abramovich1988} or \cite[Theorem~2.2.16]{Emelyanov2007}). According to~\cite[Theorem~2]{Grz90}, the set of positive invertible isometries on $L^p([0,1],\mu)$ is dense with respect to the weak operator topology in the set of all positive contractions on the same space. The assertion thus follows from Corollary~\ref{cor:convex-combinations-of-isometries} and from Remark~\ref{rem:positive-morphisms}(a).
\end{proof}

At first glance, it seems that dilations of convex combinations -- which constitute the most significant part of the present work -- do not play a role in the proof of Lemma~\ref{lem:akcoglu-lp-0-1} since the set of positive invertible isometries itself (and not only its convex hull) is weakly dense in the set of all positive contractions. However, the situation is not quite that simple: the weak operator closure of a set of invertible isometries might not have a simultaneous dilation in general (see the discussion after Proposition~\ref{prop:stop-limits}), but the strong operator closure has a simultaneous dilation according to Proposition~\ref{prop:stop-limits}. Thus, we need the convex hull to pass from the weak operator closure to the strong operator closure. 

\begin{remark} \label{rem:peller-power-approximation-vs-our-approach}
	One can even prove more than the density result \cite[Theorem~2]{Grz90} used above. In fact, Peller observed in~\cite[Section~4, Theorem~4 and Remark~3]{Pel81} that, for every regular operator $T$ on $L^p([0,1])$ with regular norm at most one, there exists a sequence of invertible isometries $(T_k)_{k \in \IN}$ on $L^p([0,1])$ such that all powers $T_k^n$ converge weakly to $T^n$ ($n \in \bbN_0$). This implies the Akcoglu--Sucheston dilation theorem on $L^p([0,1])$ (see the discussion after Proposition~\ref{prop:stop-limits}), and from this result one can deduce the theorem on general $L^p$-spaces (by the techniques used below). We find it important to compare this argument with our approach in more detail:
	
	As pointed out in the introduction, the main feature of our approach is that it splits dilation theorems into a purely dilation theoretic part which works on general Banach spaces (and which we have worked out in this paper) and into an approximation theoretic part. If we follows Peller's approach instead, proving dilation theorems seems to come down to an entirely approximation theoretical task. The price for this is that one has to show approximation results in a stronger topology than the weak operator topology and that one is not allowed to use convex combinations for the approximation.
	
	Such a stronger approximation giving the weak convergence of all powers works on $L^p([0,1])$ and -- to a certain extend -- on a class of rearrangement invariant Banach function spaces (see \cite[Section~6, Theorem~8]{Pel81}). However, the need to prove such stronger approximation results might turn out to be an obstacle if one intends to find dilation theorems on other classes of Banach spaces (compare also Open Problem~\ref{open-problem:lp-lq}). Besides, we point out that this approach does not yield simultaneous dilations.
\end{remark}

\begin{lemma} \label{lem:akcoglu-lp-n}
	Theorem~\ref{thm:akcoglu-simultaneously} is true if $\Omega = \{1,\dots, n\}$ for $n \in \bbN$ and if $\mu$ is the counting measure.
\end{lemma}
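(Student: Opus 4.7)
The plan is to reduce Lemma~\ref{lem:akcoglu-lp-n} to Lemma~\ref{lem:akcoglu-lp-0-1} by realising $L^p(\Omega,\mu)$ as a positively complemented sublattice of $L^p([0,1])$. First I would partition $[0,1]$ into the intervals $I_k \coloneqq [(k-1)/n, k/n]$ and introduce the scaled embedding
\begin{equation*}
	\iota \colon L^p(\Omega,\mu) \to L^p([0,1]), \qquad \iota(c) \coloneqq n^{1/p} \sum_{k=1}^n c_k \mathbf{1}_{I_k},
\end{equation*}
together with the averaging map
\begin{equation*}
	\pi \colon L^p([0,1]) \to L^p(\Omega,\mu), \qquad \pi(f)_k \coloneqq n^{1/q} \int_{I_k} f\, dx,
\end{equation*}
where $q$ denotes the H\"older conjugate of $p$. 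A direct computation (using Hölder's inequality on each $I_k$ for $\pi$) shows that $\iota$ is an isometric lattice embedding, $\pi$ a positive contraction, and $\pi \iota = \Id_{L^p(\Omega,\mu)}$.

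Given a positive contraction $T \in \calT$, I would then set $\widetilde T \coloneqq \iota T \pi \in \calL(L^p([0,1]))$; this is a positive contraction, and since $\pi\iota = \Id$ one obtains the telescoping identity
\begin{equation*}
	\widetilde T_1 \cdots \widetilde T_r = \iota\, T_1 \cdots T_r\, \pi
\end{equation*}
for every $r \in \bbN$ and all $T_1,\dots,T_r \in \calT$. In particular $\pi\, \widetilde T_1 \cdots \widetilde T_r\, \iota = T_1 \cdots T_r$.

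Next I would apply Lemma~\ref{lem:akcoglu-lp-0-1} to the set $\widetilde\calT \coloneqq \{\widetilde T : T \in \calT\}$ of positive contractions on $L^p([0,1])$. This produces an $L^p$-space $Y$, positive contractions $J_0 \colon L^p([0,1]) \to Y$ and $Q_0 \colon Y \to L^p([0,1])$, and lattice isomorphic invertible isometries $U_{\widetilde T} \in \calL(Y)$ that simultaneously and positively dilate $\widetilde\calT$. Defining $J \coloneqq J_0 \iota$, $Q \coloneqq \pi Q_0$ and $U_T \coloneqq U_{\widetilde T}$, the maps $J$ and $Q$ are positive contractions and the $U_T$ are lattice isomorphic invertible isometries on $Y \in \calX$, with
\begin{equation*}
	Q\, U_{T_1} \cdots U_{T_r}\, J = \pi Q_0\, U_{\widetilde T_1} \cdots U_{\widetilde T_r}\, J_0 \iota = \pi\, \widetilde T_1 \cdots \widetilde T_r\, \iota = T_1 \cdots T_r
\end{equation*}
for every $r \in \bbN_0$ and all $T_1,\dots,T_r \in \calT$; this yields the required simultaneous positive dilation of $\calT$ in the class of $L^p$-spaces.

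There is no genuine obstacle here: the only delicate point is choosing the exponents $1/p$ and $1/q$ correctly in the definitions of $\iota$ and $\pi$ so that isometry and contractivity hold \emph{simultaneously}; this is essentially forced, because $\iota\pi$ then becomes the natural conditional expectation onto the step functions subordinate to the partition. Both the extension $T \mapsto \widetilde T$ and the postcomposition with $(J_0,Q_0)$ preserve positivity and the lattice isomorphism structure of the $U_T$, so the extra conclusion about positive dilations and lattice isomorphic isometries in Theorem~\ref{thm:akcoglu-simultaneously} is preserved under the reduction.
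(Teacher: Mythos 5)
Your proposal is correct and follows essentially the same route as the paper: the paper also embeds $\ell^p_n$ isometrically as an $n$-dimensional sublattice of $L^p([0,1])$ (your $\iota$ is exactly its isometric lattice homomorphism $J$, and your $\pi$ is its $Q = J^{-1}P$ with $P$ the conditional expectation), transfers each $T$ to the positive contraction $JTQ$ on $L^p([0,1])$, telescopes using $QJ = \Id$, and then invokes Lemma~\ref{lem:akcoglu-lp-0-1}. The only difference is that you write out the embedding and averaging maps concretely with the normalizing constants $n^{1/p}$ and $n^{1/q}$, whereas the paper states their existence abstractly.
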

\begin{proof}
	Fix $n \in \bbN$. We write $\ell^p_n \coloneqq L^p(\Omega,\mu)$ and we use the abbreviation $L^p([0,1])$ for the $L^p$-space over $[0,1]$ endowed with the Lebesgue measure.
	
	There exist an $n$-dimensional vector sublattice $F \subseteq L^p([0,1])$, a positive contractive projection $P \in \calL(L^p([0,1]))$ with range $F$ and an isometric lattice homomorphism $J\colon \ell^p_n \to L^p([0,1])$ with range $F$. We define $Q \coloneqq J^{-1}P: L^p([0,1]) \to \ell^p_n$ and we set $S_T \coloneqq J T Q \in \calL(L^p([0,1]))$ for each $T \in \calT$.  Then $Q$ and $J$ are positive, each operator $S_T$ is a positive contraction on $L^p([0,1])$, and the diagramm
	\begin{center}
		\begin{tikzcd}
			L^p([0,1]) \arrow{rrr}{S_{T_1} \cdots S_{T_k}} & & & L^p([0,1]) \arrow{d}{Q} \\
			\ell^p_n \arrow{u}{J} \arrow{rrr}{T_1 \cdots T_k} & & & \ell^p_n
		\end{tikzcd}
	\end{center}
	commutes for all $k \in \bbN_0$ and $T_1,\dots,T_k \in \calT$. Thus, the assertion follows from Lemma~\ref{lem:akcoglu-lp-0-1}.
\end{proof}

\begin{remark}
	The reduction of Lemma~\ref{lem:akcoglu-lp-n} to Lemma~\ref{lem:akcoglu-lp-0-1} we used in the above proof is a bit curious: recall that Lemma~\ref{lem:akcoglu-lp-0-1} mainly relies on the fact that the positive invertible isometries on $L^p([0,1])$ are weakly dense in the set of positive contractions. On the other hand, on the finite dimensional spaces $\ell^p_n$ and for $p\not=2$ this is not even true for the subconvex hull of all positive (invertible) isometries. Indeed, every isometric positive matrix on such a space is a permutation matrix (this follows for instance from the fact that a linear isometry between two $L^p$-spaces is always disjointness preserving in case that $p \neq 2$, see \cite{Lamperti1958}). 
	Hence, every operator $T$ in the subconvex hull of those matrices maps the vector $e = (1, \ldots, 1)$ to a vector smaller than $e$, and so does the adjoint of $T$. There are, however, positive contractions on $\ell^p_n$ which do not behave this way. From a different view point, the set of all positive contractions on $\ell_n^p$ has a rich collection of extreme points. A complete characterization of these extreme points can be found in~\cite[Theorem~3]{Grz85}. 

	Hence, in order to apply Corollary~\ref{cor:convex-combinations-of-isometries} to $\ell^p_n$-spaces, we have to take a detour via the diffuse space $L^p([0,1])$; this is indeed quite surprising since in the proof of Theorem~\ref{thm:akcoglu-simultaneously} we prove the Akcoglu--Sucheston dilation theorem for general $L^p$-spaces by reducing it to the case of $\ell^p_n$-spaces - which is, in a sense, converse to the reduction of Lemma~\ref{lem:akcoglu-lp-n} to Lemma~\ref{lem:akcoglu-lp-0-1}. 
\end{remark}

The technique that we now use in the proof of Theorem~\ref{thm:akcoglu-simultaneously} is nowadays standard and goes back to Peller and W.B.~Johnson. It was, for instance, used by Akcoglu and Sucheston in~\cite[Section~4]{AkcSuc77}. Since we deal with simultaneous dilations here instead of dilations of a single operator, we think it is worthwhile to include the details.

\begin{proof}[Proof of Theorem~\ref{thm:akcoglu-simultaneously}]
	Throughout the proof, we use the abbreviation $L^p \coloneqq L^p(\Omega,\mu)$ and we let $\ell^p_n$ denote the space $\bbR^n$ (or $\bbC^n$) endowed with the $p$-norm.
	
	We call a finite collection of pairwise disjoint measurable subsets of $\Omega$ of strictly positive and finite measure a semi-partition of $\Omega$. The set $\mathcal{P}$ of all semi-partitions of $\Omega$ is a directed set with respect to refinement; we write $\alpha \ge \beta$ if $\alpha$ is finer than $\beta$. For each semi-partition $\alpha \in \mathcal{P}$ we can define its conditional expectation $\IE_{\alpha}$. Then the net $(\IE_{\alpha})_{\alpha \in \mathcal{P}}$ converges to the identity with respect to the strong operator topology.
	
	Fix $\alpha \in \mathcal{P}$. Since the range of $\IE_\alpha$ is a finite-dimensional vector sublattice of $L^p$, there exists an integer $n_\alpha \in \bbN$ and an isometric lattice homomorphism $J_\alpha \colon \ell^p_{n_\alpha} \to L^p$ whose range coincides with the range of $\IE_\alpha$. We set $Q_\alpha \coloneqq J_\alpha^{-1} \IE_\alpha\colon L^p \to \ell^p_{n_\alpha}$ and we define $S_{T,\alpha} \coloneqq Q_\alpha TJ_\alpha$ for each $T \in \calT_\alpha$. Then the diagram
	\begin{center}
		\begin{tikzcd}
			\ell^p_{n_\alpha} \arrow{rrrr}{S_{T_1,\alpha} \cdots S_{T_k,\alpha}} & & & & \ell^p_{n_\alpha} \arrow{d}{J_\alpha} \\
			L^p \arrow{u}{Q_\alpha} \arrow{rrrr}{ \IE_\alpha \, \cdot \, (\IE_\alpha T_1 \IE_\alpha) \, \cdots \, (\IE_\alpha T_k \IE_\alpha)} & & & & L^p
		\end{tikzcd}
	\end{center}
	commutes for each $k \in \bbN_0$ and all $T_1,\dots,T_k \in \calT$. Note that we need the additional operator $\IE_\alpha$ on  the very left of the lower horizontal arrow to ensure that the diagram also commutes in case that $k = 0$; indeed, the lower arrow equals $\IE_\alpha$ in this case (instead of $\Id_{L^p}$ which would be false).
	
	According to Lemma~\ref{lem:akcoglu-lp-n}, we can find an $L^p$-space $X_\alpha$, positive contractions $\tilde J_\alpha \colon \ell^p_{n_\alpha} \to  X_\alpha$ and $\tilde Q_\alpha \colon X_\alpha \to \ell^p_{n_\alpha}$ and isometric lattice isomorphisms $U_{\alpha, S} \in \calL(X_\alpha)$ (for each positive contraction $S$ on $\ell^p_{n_\alpha}$) such that
	\begin{align*}
		S_{1} \cdots S_{k} = \tilde Q_\alpha U_{\alpha, S_1} \cdots U_{\alpha, S_k} \tilde J_\alpha
	\end{align*}
	for all $k \in \bbN_0$ and all positive contractions $S_{1} ,\dots, S_{k}$ on $\ell^p_{n_\alpha}$. Choose an ultrafilter $\calU$ on $\mathcal{P}$ containing the filter base $\big\{ \{\alpha \in \mathcal{P}: \, \alpha \ge \beta\}: \; \beta \in \mathcal{P} \big\}$. Then the diagram
	\begin{center}
		\begin{tikzcd}
			\prod_\calU X_\alpha \arrow{rrrrr}{ (\prod_\calU U_{\alpha, S_{T_1,\alpha}}) \, \cdots \, (\prod_\calU U_{\alpha, S_{T_k,\alpha}}) } & & & & & \prod_\calU X_\alpha \arrow{d}{\prod_\calU \tilde Q_\alpha} \\
			\prod_\calU \arrow{u}{\prod_\calU \tilde J_\alpha} \ell^p_{n_\alpha} \arrow{rrrrr}{(S_{T_1,\alpha})^\calU \cdots (S_{T_k,\alpha})^\calU} & & & & & \prod_\calU \ell^p_{n_\alpha} \arrow{d}{\prod_\calU J_\alpha} \\
			(L^p)^\calU \arrow{u}{\prod_\calU Q_\alpha} \arrow{rrrrr}{\prod_\calU \big(\IE_\alpha \, \cdot \, (\IE_\alpha T_1 \IE_\alpha) \, \cdots \, (\IE_\alpha T_k \IE_\alpha) \big)} & & & & & (L^p)^\calU \arrow{d}{} \\
			L^p \arrow{u}{} \arrow{rrrrr}{T_1 \cdots T_k} & & & & & L^p
		\end{tikzcd}
	\end{center}
	commutes for each $k \in \bbN_0$ and all $T_1,\dots,T_k \in \calT$. Here, the mapping $L^p \to (L^p)^\calU$ between the first and the second line (counted from below) is the canonical injection and $(L^p)^\calU \to L^p$ between the second and the first line is the mapping induced by the weak limit along $\calU$ (which exists since $L^p$ is reflexive). We note that the diagram commutes between the first and the second line since the operator net $\big(\IE_\alpha \, \cdot \, (\IE_\alpha T_1 \IE_\alpha) \, \cdots \, (\IE_\alpha T_k \IE_\alpha)\big)_{\alpha \in \mathcal{P}}$ converges strongly to $T_1 \cdots T_k$ and since the ultrafilter $\calU$ is adapted to the order on $\mathcal{P}$. The diagram shows that $\calT$ has a simultaneous dilation with the required properties.
\end{proof}

It would be interesting to have a similar result as in Theorem~\ref{thm:akcoglu-simultaneously} -- or, say, at least Lemma~\ref{lem:akcoglu-lp-0-1} -- available on $L^p(L^q)$-spaces, too. The class of all $L^p(L^q)$-spaces itself is not ultra-stable, but the class of all bands in $L^p(L^q)$-spaces is ultra-stable (this follows from \cite[Corollary~8.8]{HLR91}) and thus fulfills Assumptions~\ref{ass:framework}. Hence, in order to apply our main result and its corollaries, it would be desirable to understand the weakly closed convex hull of all positive invertible isometries on such spaces.

\begin{open_problem} \label{open-problem:lp-lq}
	Let $p,q \in (1,\infty)$ and let $\calT$ denote the weak operator closure of the convex hull of all positive invertible isometries on $L^p([0,1]; L^q([0,1]))$. Does $\calT$ coincide with the set of all positive contractions? If not, can a good characterization of the elements of $\calT$ be given?
\end{open_problem}

\subsection*{Dilations on Hilbert spaces} \label{subsec:hilbert}

In the previous subsection we considered a dilation result for positive operators in the $L^p$-setting. On Hilbert spaces, on the other hand, one gets results for arbitrary contractions. For single operators, this is the well-known dilation theorem of Sz.-Nagy. We note that a standard proof of this result even yields a simultaneous dilation of all contractions on a given Hilbert space, as for example pointed out in \cite[Section~1.5.8]{Nik02}. Although this construction is not particularly difficult, we find it worthwhile to show that the same result can be obtained as a consequence of Corollary~\ref{cor:convex-combinations-of-isometries}. This emphasizes the universality of our approach.

\begin{theorem} \label{thm:hilbert-space-dilations-simultaneous}
	Let $H$ be a Hilbert space and $\calT \subseteq \calL(H)$ the set of all contractions on $H$. Then $\calT$ has a simultaneous dilation in the class of all Hilbert spaces.
\end{theorem}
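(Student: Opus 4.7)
The plan is to apply Corollary~\ref{cor:convex-combinations-of-isometries} to the class of Hilbert spaces. By Example~\ref{ex:class_hilbert_space}, this class fulfills Assumptions~\ref{ass:framework} with $p = 2$, and on a Hilbert space the invertible linear isometries are exactly the unitary operators. Hence, Corollary~\ref{cor:convex-combinations-of-isometries} immediately gives that the weak operator closure of $\conv \calU(H)$, where $\calU(H)$ denotes the unitary group of $\calL(H)$, admits a simultaneous dilation in the class of Hilbert spaces.

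It therefore only remains to verify that every contraction on $H$ lies in the weak operator closure of $\conv \calU(H)$. This is the content of the classical Russo--Dye theorem, which asserts that in any unital $C^*$-algebra the closed unit ball coincides with the norm-closed convex hull of the unitary group. Applied to $\calL(H)$, this yields that every contraction is already a norm limit, hence \emph{a fortiori} a weak operator limit, of convex combinations of unitaries. Feeding this approximation statement into the previous paragraph gives the theorem.

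The only non-trivial ingredient is the approximation step, i.e.\ the Russo--Dye theorem, and it is precisely here that the Hilbert space geometry enters. Everything else is a pure application of the toolkit built up in Sections~\ref{sec:tool-kit}--\ref{sec:convex-combinations-simultaneous}, and the scheme is exactly parallel to the one used for $L^p$-spaces in Subsection~\ref{subsec:L-p}: a general structural result reduces the dilation problem to a concrete Banach-space-specific approximation theorem, and in the Hilbert space setting that approximation theorem is supplied by Russo--Dye (in analogy with Grz\k{a}\'slewicz's density theorem in the $L^p$ case).
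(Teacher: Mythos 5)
Your proof is correct for \emph{complex} Hilbert spaces, and it takes a genuinely different route from the paper. The paper never invokes Russo--Dye: it first reduces to the case of a \emph{finite-dimensional} $H$, by the same compression-plus-ultraproduct argument as in the proof of Theorem~\ref{thm:akcoglu-simultaneously}, and then uses the elementary fact that in finite dimensions the convex hull of the unitaries is \emph{exactly} the set of all contractions (polar decomposition/singular value decomposition, plus the observation that a diagonal contraction is a convex combination of diagonal unitaries), after which Corollary~\ref{cor:convex-combinations-of-isometries} applies -- no closure is even needed there. Your route skips the reduction machinery entirely and is shorter and more in the spirit of the ``toolkit plus approximation theorem'' philosophy; what it costs is the appeal to a much deeper approximation result, where the paper gets by with linear algebra.

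There is, however, one genuine gap in scope: the paper's standing convention allows the scalar field to be $\bbR$ or $\bbC$, and the Russo--Dye theorem as you cite it is a theorem about \emph{complex} unital $C^*$-algebras. It fails for general real $C^*$-algebras: in $C([0,1];\bbR)$ the unitaries are only the constant functions $\pm 1$, so their closed convex hull is nowhere near the unit ball. Hence for a real Hilbert space your key approximation step is not covered by the cited theorem. The statement you need is still true in the real case, but it requires a separate argument, for instance: for finite-dimensional real $H$ the singular value decomposition shows $\conv(O(H))$ equals the set of contractions, and for infinite-dimensional real $H$ one obtains weak operator density of $\conv(O(H))$ in the contractions by compressing a given contraction $T$ to the finite-dimensional subspace $F$ spanned by the test vectors, writing $P_F T P_F|_F$ as a convex combination of orthogonal maps of $F$, and extending these by the identity on $F^\perp$ -- which reproduces the matrix coefficients $\langle Tx_i, y_j\rangle$ exactly. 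This is, in effect, the finite-dimensional reduction the paper performs anyway. So: as written, your argument settles the complex case by a more direct path, but it should either restrict to complex scalars or be supplemented along these lines to cover real Hilbert spaces.
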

\begin{proof}
	By a similar reduction argument as used in the proof of Theorem~\ref{thm:akcoglu-simultaneously} it suffices to establish the result if $H$ is finite dimensional. In this case, however, the convex hull of all (invertible) isometries in $\calL(H)$ coincides with the set of all contractions in $\calL(H)$; this is an easy consequence of the polar decomposition theorem for matrices. Hence, the assertion follows from Corollary~\ref{cor:convex-combinations-of-isometries}.
\end{proof}

\section{Outlook}\label{sec:outlook}

Our techniques do not work without adjustments to obtain non-trivial results on $L^1$-spaces: we require all our Banach spaces to be reflexive. However, as pointed out in Construction~\ref{constr:simple-dilation-ell-1}, it is not particularly difficult to find a dilation on a ``large'' $L^1$-space. We leave it to future research to find out whether our techniques can be adapted to $L^1$-spaces. Moreover, no attempt has been made to apply our results to non-commutative $L^p$-spaces; we also leave this as a task for the future. 

In view of our definition of a \emph{simultaneous dilation} (Definition~\ref{def:dilations}) it is worthwhile pointing out that there is a distinct interest in \emph{commutative} simultaneous dilations in the literature, especially in the Hilbert space case; see for instance \cite{Ando1963, Gacspar1969, Popescu1986, Stochel2001, Opela2006, SauPreprint} as well as \cite[Chapter~I]{SFBK10} and \cite[Section~4]{Levy2014} for this and related topics. Our approach does not yield commutative simultaneous dilations of commuting operators; we do not know whether commutative dilation theorems can be derived from our simultaneous dilations results.

A related question concerns the task of dilating a $C_0$-semigroup of operators instead of a single operator only; this question has been studied by Fendler for $L^p$-spaces~\cite{Fendler1997}  and by Konrad for $L^1$-spaces~\cite{Konrad2015}. Once a dilation theorem for single operators on a class of uniformly convex Banach spaces is established (as we do in the present work), one can mimic Fendler's general argument to obtain semigroup dilations.

\appendix

\section{Some observations from group theory} \label{appendix:group-theory}

In this appendix we explicitly write down a few simple observations from the theory of groups which are needed in the proofs of Theorems~\ref{thm:convex-combination} and~\ref{thm:convex-combinations-simultaneous}.

\begin{proposition} \label{prop:count-group-elements}
	Let $G$ be a finite abelian group, let $\varphi: G \times G \to G$, $(g_1,g_2) \mapsto g_1 g_2$. Then $\varphi$ is surjective and the preimage of each element $g \in G$ contains exactly $|G|$ elements.
\end{proposition}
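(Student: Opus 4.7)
The plan is to fix an arbitrary element $g \in G$ and explicitly describe the fiber $\varphi^{-1}(g)$. For each $g_1 \in G$, the equation $g_1 g_2 = g$ admits exactly one solution in $g_2$, namely $g_2 = g_1^{-1} g$, because $G$ is a group and left multiplication by $g_1$ is a bijection $G \to G$. Consequently
\begin{equation*}
    \varphi^{-1}(g) = \bigl\{ (g_1, g_1^{-1} g) : g_1 \in G \bigr\},
\end{equation*}
and the map $g_1 \mapsto (g_1, g_1^{-1} g)$ is an injection from $G$ onto $\varphi^{-1}(g)$. Hence $\lvert \varphi^{-1}(g) \rvert = \lvert G \rvert$. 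In particular $\varphi^{-1}(g)$ is non-empty for every $g$, so $\varphi$ is surjective.

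There is really no obstacle here: the argument does not even require $G$ to be abelian, only that it be a group, so the hypothesis of commutativity is used only insofar as it matches the intended application in the cyclic group $C_N$ appearing in the proofs of Theorems~\ref{thm:convex-combination} and~\ref{thm:convex-combinations-simultaneous}. One could alternatively phrase the proof by noting that $\varphi$ factors as the composition of the bijection $G \times G \to G \times G$, $(g_1, g_2) \mapsto (g_1, g_1 g_2)$, with the projection onto the second coordinate, whose fibers all have cardinality $\lvert G \rvert$; but the direct description above is the most transparent.
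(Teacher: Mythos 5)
Your proof is correct, but it takes a different route from the paper. The paper exploits the abelian hypothesis: commutativity makes $\varphi$ a group homomorphism from $G \times G$ to $G$, its kernel is $\{(g, g^{-1}) : g \in G\}$ with $|G|$ elements, and then the standard fact that all fibers of a surjective homomorphism are cosets of the kernel gives the count. You instead parametrize each fiber directly, $\varphi^{-1}(g) = \{(g_1, g_1^{-1}g) : g_1 \in G\}$, which is a bijective image of $G$; this is more elementary (no appeal to kernels or cosets) and, as you correctly note, it shows the abelian hypothesis is superfluous -- the statement holds for every finite group, and commutativity is only present in the paper because it is what makes the homomorphism argument available and because the intended application is to the cyclic group $C_N$. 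What the paper's argument buys is brevity for a reader fluent in basic group theory; what yours buys is a strictly more general statement proved from first principles. Both are complete and correct.
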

\begin{proof}
	Obviously $\varphi$ is surjective. Since $G$ is abelian, $\varphi$ is a group homomorphism, and its kernel clearly consists of $|G|$ elements. Now, let $g \in G$. Since $g$ is contained in the range of $\varphi$ we have $|\varphi^{-1}(\{g\})| = |\ker \varphi| = |G|$.
\end{proof}

\begin{proposition} \label{prop:group-operation}
	Let $G$ be a finite group which operates on a finite set $X$. Fix $x \in X$ and denote the orbit of $x$ under $G$ by $G(x)$. Then $|G(x)|$ divides $|G|$. Moreover, if we define
	\begin{align*}
		G_y := \{g \in G: g(x) = y\}
	\end{align*}
	for each $y \in G(x)$, then the family $(G_y)_{y \in G(x)}$ is a partition of $G$ into $|G(x)|$ disjoint subsets and each set $G_y$ has the cardinality $\frac{|G|}{|G(x)|}$.
\end{proposition}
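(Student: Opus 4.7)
The plan is to reduce the statement to the orbit–stabilizer theorem. First I would fix the stabilizer subgroup
\begin{equation*}
	G_x \coloneqq \{g \in G: g(x) = x\},
\end{equation*}
which is easily checked to be a subgroup of $G$; note that $G_x = G_y$ in the notation of the proposition when $y = x$.

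Next I would show that each $G_y$ (for $y \in G(x)$) is a left coset of $G_x$. Indeed, since $y \in G(x)$, there exists some $h \in G$ with $h(x) = y$, and for any $g \in G$ one has $g(x) = y$ if and only if $(h^{-1}g)(x) = x$, i.e.\ $h^{-1}g \in G_x$, i.e.\ $g \in hG_x$. Hence $G_y = hG_x$. In particular, all the sets $G_y$ have the same cardinality $|G_x|$.

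Since two elements of $G$ lie in the same $G_y$ exactly when they send $x$ to the same point, the family $(G_y)_{y \in G(x)}$ is pairwise disjoint, and its union is all of $G$ (every $g \in G$ satisfies $g(x) \in G(x)$ and thus belongs to $G_{g(x)}$). This gives a partition of $G$ into exactly $|G(x)|$ disjoint cosets of $G_x$. Counting elements in this partition yields
\begin{equation*}
	|G| = |G(x)| \cdot |G_x|,
\end{equation*}
so $|G(x)|$ divides $|G|$ and each $G_y$ has cardinality $|G|/|G(x)|$. No real obstacle is expected; the only point requiring a little care is the verification that $G_y = hG_x$ is independent of the choice of representative $h$, which follows from $G_x$ being a subgroup.
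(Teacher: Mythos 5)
Your proof is correct and follows essentially the same route as the paper: the paper also fixes $g_0 \in G_y$ and exhibits the bijection $G_x \to G_y$, $g \mapsto g_0\,g$, which is precisely your statement that each fiber $G_y$ is a left coset of the stabilizer $G_x$. Your write-up merely makes the coset/orbit--stabilizer framing explicit, but the underlying argument is identical.
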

\begin{proof}
	Clearly, the sets $G_y$ (for $y \in G(x)$) are disjoint and form a partition of $G$ into $|G(x)|$ subsets, so it remains to show that all sets $G_y$ have the same cardinality. Let $y \in G(x)$ and fix an element $g_0 \in G_y$. Then the mapping
	\begin{align*}
		G_x & \to G_y \\
		g & \mapsto g_0 \, g
	\end{align*}
	is a bijection between $G_x$ and $G_y$. Hence, all sets $G_y$ have the same cardinality.
\end{proof}

\bibliographystyle{alpha}
\bibliography{dilations_scribble_biber}

\begin{thebibliography}{HvNVW16}

\bibitem[Abr88]{Abramovich1988}
Yuri~A. Abramovich.
\newblock Isometries of normed lattices.
\newblock {\em Optimizatsiya}, (43(60)):74--80, 1988.

\bibitem[AFLM17]{ArhFacMer17}
C\'edric Arhancet, Stephan Fackler, and Christian Le~Merdy.
\newblock Isometric dilations and {$H^\infty$} calculus for bounded analytic
  semigroups and {R}itt operators.
\newblock {\em Trans. Amer. Math. Soc.}, 369(10):6899--6933, 2017.

\bibitem[Akc75]{Akc75}
M.~A. Akcoglu.
\newblock A pointwise ergodic theorem in {$L_{p}$}-spaces.
\newblock {\em Canad. J. Math.}, 27(5):1075--1082, 1975.

\bibitem[ALM14]{ArhMer14}
C{{\'e}}dric Arhancet and Christian Le~Merdy.
\newblock Dilation of {R}itt operators on {$L^p$}-spaces.
\newblock {\em Israel J. Math.}, 201(1):373--414, 2014.

\bibitem[And63]{Ando1963}
T.~And\^o.
\newblock On a pair of commutative contractions.
\newblock {\em Acta Sci. Math. (Szeged)}, 24:88--90, 1963.

\bibitem[{Arh}]{ArhancetPreprint}
C.~{Arhancet}.
\newblock {Dilations of semigroups on von Neumann algebras and noncommutative
  $L^p$-spaces}.
\newblock Preprint. Available online from arxiv.org/abs/1603.04901.

\bibitem[AS77]{AkcSuc77}
M.~A. Akcoglu and L.~Sucheston.
\newblock Dilations of positive contractions on {$L_{p}$} spaces.
\newblock {\em Canad. Math. Bull.}, 20(3):285--292, 1977.

\bibitem[Blu01]{Blu01b}
S\"onke Blunck.
\newblock Analyticity and discrete maximal regularity on {$L_p$}-spaces.
\newblock {\em J. Funct. Anal.}, 183(1):211--230, 2001.

\bibitem[CRW78]{CoiRocWei78}
Ronald~R. Coifman, Richard Rochberg, and Guido Weiss.
\newblock Applications of transference: the {$L^{p}$} version of von
  {N}eumann's inequality and the {L}ittlewood-{P}aley-{S}tein theory.
\newblock In {\em Linear spaces and approximation ({P}roc. {C}onf., {M}ath.
  {R}es. {I}nst., {O}berwolfach, 1977)}, pages 53--67. Internat. Ser. Numer.
  Math., Vol. 40. Birkh{\"a}user, Basel, 1978.

\bibitem[DJT95]{DJT95}
Joe Diestel, Hans Jarchow, and Andrew Tonge.
\newblock {\em Absolutely summing operators}, volume~43 of {\em Cambridge
  Studies in Advanced Mathematics}.
\newblock Cambridge University Press, Cambridge, 1995.

\bibitem[DS58]{DunSch1958}
Nelson Dunford and Jacob~T. Schwartz.
\newblock {\em Linear {O}perators. {I}. {G}eneral {T}heory}.
\newblock With the assistance of W. G. Bade and R. G. Bartle. Pure and Applied
  Mathematics, Vol. 7. Interscience Publishers, Inc., New York; Interscience
  Publishers, Ltd., London, 1958.

\bibitem[Eme07]{Emelyanov2007}
Eduard~Yu. Emel'yanov.
\newblock {\em Non-spectral asymptotic analysis of one-parameter operator
  semigroups}, volume 173 of {\em Operator Theory: Advances and Applications}.
\newblock Birkh\"auser Verlag, Basel, 2007.

\bibitem[Fen97]{Fendler1997}
Gero Fendler.
\newblock Dilations of one parameter semigroups of positive contractions on
  {$L^p$} spaces.
\newblock {\em Canad. J. Math.}, 49(4):736--748, 1997.

\bibitem[Fen98]{Fendler1998}
Gero Fendler.
\newblock On dilations and transference for continuous one-parameter semigroups
  of positive contractions on {${L}^p$}-spaces.
\newblock {\em Ann. Univ. Sarav. Ser. Math.}, 9(1):iv+97, 1998.
\newblock Preprint avaible online at arxiv.org/abs/1202.5425v1.

\bibitem[GR69]{Gacspar1969}
D.~Ga\c{s}par and A.~R\'acz.
\newblock An extension of a theorem of {T}. {A}nd\^o.
\newblock {\em Michigan Math. J.}, 16:377--380, 1969.

\bibitem[Grz85]{Grz85}
Ryszard Grz{\k a}{\'s}lewicz.
\newblock Extreme positive contractions on finite-dimensional {$l^p$}-spaces.
\newblock {\em Canad. J. Math.}, 37(4):682--699, 1985.

\bibitem[Grz90]{Grz90}
Ryszard Grz{\k a}{\'s}lewicz.
\newblock Approximation theorems for positive operators on {$L^p$}-spaces.
\newblock {\em J. Approx. Theory}, 63(2):123--136, 1990.

\bibitem[Hei80]{Hei80}
Stefan Heinrich.
\newblock Ultraproducts in {B}anach space theory.
\newblock {\em J. Reine Angew. Math.}, 313:72--104, 1980.

\bibitem[HLR91]{HLR91}
Richard Haydon, Mireille Levy, and Yves Raynaud.
\newblock {\em Randomly normed spaces}, volume~41 of {\em Travaux en Cours
  [Works in Progress]}.
\newblock Hermann, Paris, 1991.

\bibitem[HvNVW16]{HytNeeVer16}
Tuomas~P. Hyt{\"o}nen, Jan van Neerven, Mark Veraar, and Lutz Weis.
\newblock {\em Analysis in {B}anach spaces. {V}ol. {I}. {M}artingales and
  {L}ittlewood-{P}aley theory}, volume~63 of {\em Ergebnisse der Mathematik und
  ihrer Grenzgebiete. 3. Folge. A Series of Modern Surveys in Mathematics
  [Results in Mathematics and Related Areas. 3rd Series. A Series of Modern
  Surveys in Mathematics]}.
\newblock Springer, Cham, 2016.

\bibitem[JLM07]{Junge2007}
Marius Junge and Christian Le~Merdy.
\newblock Dilations and rigid factorisations on noncommutative {$L^p$}-spaces.
\newblock {\em J. Funct. Anal.}, 249(1):220--252, 2007.

\bibitem[KNP77]{Kern1977}
Martin Kern, Rainer Nagel, and G\"unther Palm.
\newblock Dilations of positive operators: construction and ergodic theory.
\newblock {\em Math. Z.}, 156(3):265--277, 1977.

\bibitem[Kon15]{Konrad2015}
Leonard~J. Konrad.
\newblock Lattice dilations of bistochastic semigroups.
\newblock In {\em Operator semigroups meet complex analysis, harmonic analysis
  and mathematical physics}, volume 250 of {\em Oper. Theory Adv. Appl.}, pages
  287--296. Birkh\"auser/Springer, Cham, 2015.

\bibitem[K{\"u}m85]{Kuemmerer1985a}
Burkhard K{\"u}mmerer.
\newblock Markov dilations on {$W^\ast$}-algebras.
\newblock {\em J. Funct. Anal.}, 63(2):139--177, 1985.

\bibitem[Lam58]{Lamperti1958}
John Lamperti.
\newblock On the isometries of certain function-spaces.
\newblock {\em Pacific J. Math.}, 8:459--466, 1958.

\bibitem[LM14]{Mer14}
Christian Le~Merdy.
\newblock {$H^\infty$} functional calculus and square function estimates for
  {R}itt operators.
\newblock {\em Rev. Mat. Iberoam.}, 30(4):1149--1190, 2014.

\bibitem[LS14]{Levy2014}
Eliahu Levy and Orr~Moshe Shalit.
\newblock Dilation theory in finite dimensions: the possible, the impossible
  and the unknown.
\newblock {\em Rocky Mountain J. Math.}, 44(1):203--221, 2014.

\bibitem[MS13]{McCarthy2013}
John~E. McCarthy and Orr~Moshe Shalit.
\newblock Unitary {$N$}-dilations for tuples of commuting matrices.
\newblock {\em Proc. Amer. Math. Soc.}, 141(2):563--571, 2013.

\bibitem[Neu63]{Neunzert1963}
Helmut Neunzert.
\newblock {\"U}ber die {F}ortsetzung linearer {K}ontraktionen eines
  {$m$}-dimensionalen euklidischen {V}ektorraumes.
\newblock {\em Arch. Math. (Basel)}, 14:275--282, 1963.

\bibitem[Nik02]{Nik02}
Nikolai~K. Nikolski.
\newblock {\em Operators, functions, and systems: an easy reading. {V}ol. 2},
  volume~93 of {\em Mathematical Surveys and Monographs}.
\newblock American Mathematical Society, Providence, RI, 2002.
\newblock Model operators and systems, Translated from the French by Andreas
  Hartmann and revised by the author.

\bibitem[NP82]{NagPal82}
Rainer Nagel and G\"unther Palm.
\newblock Lattice dilations of positive contractions on {$L^{p}$}-spaces.
\newblock {\em Canad. Math. Bull.}, 25(3):371--374, 1982.

\bibitem[Op{\v{e}}06]{Opela2006}
David Op{\v{e}}la.
\newblock A generalization of {A}nd\^o's theorem and {P}arrott's example.
\newblock {\em Proc. Amer. Math. Soc.}, 134(9):2703--2710, 2006.

\bibitem[Pel81]{Pel81}
V.~V. Peller.
\newblock Analogue of {J}. von {N}eumann's inequality, isometric dilation of
  contractions and approximation by isometries in spaces of measurable
  functions.
\newblock {\em Trudy Mat. Inst. Steklov.}, 155:103--150, 185, 1981.
\newblock Spectral theory of functions and operators, II.

\bibitem[Pis01]{Pis01}
Gilles Pisier.
\newblock {\em Similarity problems and completely bounded maps}, volume 1618 of
  {\em Lecture Notes in Mathematics}.
\newblock Springer-Verlag, expanded edition, 2001.
\newblock Includes the solution to ``The Halmos problem''.

\bibitem[Pis16]{Pis16}
Gilles Pisier.
\newblock {\em Martingales in {B}anach spaces}, volume 155 of {\em Cambridge
  Studies in Advanced Mathematics}.
\newblock Cambridge University Press, Cambridge, 2016.

\bibitem[Pop86]{Popescu1986}
Gelu Popescu.
\newblock Isometric dilations of {${P}$}-commuting contractions.
\newblock {\em Rev. Roumaine Math. Pures Appl.}, 31(5):383--393, 1986.

\bibitem[{Sau}]{SauPreprint}
{Haripada}. {Sau}.
\newblock {And\^{o} dilations for a pair of commuting contractions: two
  explicit constructions and functional models}.
\newblock Preprint. Available online from arxiv.org/abs/1710.11368.

\bibitem[SN13]{Nagy2013}
B{{\'e}}la Sz.-Nagy.
\newblock On contractions in {H}ilbert space.
\newblock {\em Acta Sci. Math. (Szeged)}, 79(1-2):235--251, 2013.

\bibitem[SNFBK10]{SFBK10}
B{{\'e}}la Sz.-Nagy, Ciprian Foias, Hari Bercovici, and L{{\'a}}szl{{\'o}}
  K{{\'e}}rchy.
\newblock {\em Harmonic analysis of operators on {H}ilbert space}.
\newblock Universitext. Springer, New York, enlarged edition, 2010.

\bibitem[SS01]{Stochel2001}
Jan Stochel and Franciszek~Hugon Szafraniec.
\newblock Unitary dilation of several contractions.
\newblock In {\em Recent advances in operator theory and related topics
  ({S}zeged, 1999)}, volume 127 of {\em Oper. Theory Adv. Appl.}, pages
  585--598. Birkh\"auser, Basel, 2001.

\bibitem[Str73]{Stroescu1973}
Elena Stroescu.
\newblock Isometric dilations of contractions on {B}anach spaces.
\newblock {\em Pacific J. Math.}, 47:257--262, 1973.

\bibitem[TK82]{Thompson1982}
Robert~C. Thompson and Chi-Chin~Tung Kuo.
\newblock Doubly stochastic, unitary, unimodular, and complex orthogonal power
  embeddings.
\newblock {\em Acta Sci. Math. (Szeged)}, 44(3-4):345--357 (1983), 1982.

\bibitem[Wei01]{Wei01}
Lutz Weis.
\newblock Operator-valued {F}ourier multiplier theorems and maximal
  {$L_p$}-regularity.
\newblock {\em Math. Ann.}, 319(4):735--758, 2001.

\end{thebibliography}

\end{document}